\documentclass{amsart}
\usepackage{amsmath,amsthm}
\usepackage{amsfonts,amssymb}
\usepackage{accents}
\usepackage{enumerate}
\usepackage{accents,color}
\usepackage{graphicx}
\usepackage{comment}
\usepackage{hyperref}

\newcommand{\lvt}{\left|\kern-1.35pt\left|\kern-1.3pt\left|}
\newcommand{\rvt}{\right|\kern-1.3pt\right|\kern-1.35pt\right|}
\newtheorem{thm}{Theorem}[section]
\newtheorem{cor}[thm]{Corollary}
\newtheorem{lem}[thm]{Lemma}
\newtheorem{prop}[thm]{Proposition}

\newtheorem{defn}[thm]{Definition}
\usepackage{bm}      % Load the bm package

\theoremstyle{remark}
\newtheorem{rem}{Remark}[section]

 \def\la{{\langle}}
 \def\ra{{\rangle}}

 \def\d{\mathrm{d}}
 \def\e{\mathrm{e}}

 \def\sph{{\mathbb{S}^{d-1}}}
 
 \def\sb{{\mathsf b}}
 \def\sc{{\mathsf c}}
 \def\sd{{\mathsf d}}
  \def\sh{{\mathsf h}}

 \def\sT{{\mathsf T}}
 \def\sP{{\mathsf P}}
 \def\sQ{{\mathsf Q}}

 \def\sW{{\mathsf W}}
 \def\sY{{\mathsf Y}}

 \def\a{{\alpha}}
 \def\b{{\beta}}
 \def\g{{\gamma}}
 \def\k{{\kappa}}
 \def\t{{\theta}}
 \def\l{{\lambda}}
 \def\o{{\omega}}
 \def\s{\sigma}
 \def\la{{\langle}}
 \def\ra{{\rangle}}

 \def\bb{{\bm b}}
 \def\cb{{\bm c}}

 \def\jb{{\bm j}}
 \def\hb{{\bm h}}
 \def\kb{{\bm k}}

 \def\ub{{\bm u}}
 \def\vb{{\bm v}}
 \def\xb{{\bm x}}
 \def\yb{{\bm y}}
 
  \def\Bb{{\bm B}}

 \def\Lb{{\bm L}}
 \def\Pb{{\bm P}}
 \def\Qb{{\bm Q}}
 \def\Rb{{\bm R}}

 \def\Wb{{\bm W}}

 \def\CH{{\mathcal H}}

 \def\CV{{\mathcal V}}

 \def\BB{{\mathbb B}}

 \def\NN{{\mathbb N}}

 \def\RR{{\mathbb R}}
 \def\SS{{\mathbb S}}
 
 \def\VV{{\mathbb V}}

      \def\proj{\operatorname{proj}}

\def\lla{\langle{\kern-2.5pt}\langle}      
\def\rra{\rangle{\kern-2.5pt}\rangle}

\def\bg{{\boldsymbol{\gamma}}}
\def\bk{{\boldsymbol{\kappa}}}

\newcommand{\wh}{\widehat}

\def\f{\frac}

\graphicspath{{./}}
\begin{document}
 
\title[Addition formula for orthogonal polynomials]
{Addition formula for two families of orthogonal polynomials in several variables}
%{Orthogonal polynomials which are eigenfunctions of a partial differential operator}
\author{Yuan~Xu}
\address{Department of Mathematics, University of Oregon, Eugene,
OR 97403--1222, USA}
\email{yuan@uoregon.edu}
\thanks{The author was partially supported by Simons Foundation Grant \#849676}
\date{\today}
\subjclass[2010]{33C45, 33C50, 42C10}
\keywords{Orthogonal polynomials, weights, quadratic domains, addition formula, reproducing kernels}
%\keywords{}

\begin{abstract}
An addition formula is a closed-form formula for reproducing kernels of orthogonal polynomials of 
several variables; the latter are kernels of orthogonal projection operators. We derive addition formulas 
for two families of orthogonal polynomials, which consist of wrapped products of classical Gegenbauer 
polynomials on the unit ball with either classical Jacobi polynomials on the simplex or product Laguerre 
polynomials, each of which is given explicitly as a multilayer of integrals of a classical orthogonal 
polynomial of one variable. These formulas provide a powerful tool for analysis and allow us, in 
particular, to reduce a substantial portion of analysis on the Fourier orthogonal expansions on the 
domains to that of one variable. 
\end{abstract}

\maketitle

\section{Introduction}
\setcounter{equation}{0}
 
Spherical harmonics are homogeneous harmonic polynomials, and they are orthogonal polynomials on
the unit sphere. Let $\CH_n^d$ be the space of spherical harmonics of degree $n$ in $d$ variables 
and let $\{Y_\ell^n: 1 \le \ell \le \dim \CH_n^d\}$ be an orthonormal basis of $\CH_n^d$. One of the 
fundamental properties of spherical harmonics is an elegant formula, called an {\it addition formula}, 
given by
$$
 \sum_{\ell} Y_\ell^n(\xi) Y_\ell^n(\eta) = Z_n^{\frac{d-2}{2}} (\la \xi, \eta \ra), \qquad \xi, \eta \in \sph,
$$
where $Z_n^\l$ is a Gegenbauer polynomial of degree $n$ in one variable. The sum on the left-hand 
side is the reproducing kernel of $\CH_n^d$, which serves as the kernel of the projection operator
$\proj_n: L^2(\sph) \mapsto \CH_n^d$. As such, the addition formula provides a powerful tool for Fourier 
analysis on the sphere, which helps to reduce, in particular, a substantial portion of analysis on the 
sphere to analysis on the Fourier-Gegenbauer series on $[-1,1]$.  

More recently, reproducing kernels of several other families of orthogonal polynomials of several variables 
are found to admit closed-form formulas, which will also be called addition formulas, taking the cue from 
that of spherical harmonics. The first such formula (\cite{X99}) is for classical Gegenbauer polynomials on 
the unit ball $\BB^d$ of $\RR^d$, followed by one for $h$-spherical harmonics on weighted integrals on 
the unit sphere \cite{X97}, which further implies an addition formula for the classical Jacobi polynomials 
on the simplex in $\RR^d$. In each of these cases, 
the closed-form formula of the addition formula is given by a multilayer of integrals of one classical 
orthogonal polynomial of one variable, which reveals a hidden one-dimensional characteristic of the 
reproducing kernels. The addition formula is one of two intrinsic characteristics for the classical orthogonal 
polynomials in several variables. The other one is the spectral operator, a second-order differential 
operator that has orthogonal polynomials as eigenfunctions. Together, the two characteristics play  
essential roles in recent studies of several problems in approximation theory and harmonic analysis, both 
theoretical and computational, based on Fourier orthogonal expansions. In particular, they and other features 
of regular domains, such as the unit sphere, unit ball, and simplex, have been utilized to obtain elegant 
and revealing results that serve as prototypes in higher dimensions (cf. \cite{DaiX, IPX, KPX2, NPW1, 
NPW2, PX, X05, X21} and references therein). 

Systems of orthogonal polynomials that possess both characteristics are special and rare. Motivated by 
the advances described above, orthogonal polynomials on rotational quadratic domains are explored in 
\cite{OX, X20, X21, X21b}, which show, in particular, that two families of orthogonal polynomials on the 
rotational cones in $\RR^{d+1}$, as well as those on the conic surfaces, possess both a spectral operator 
and an addition formula. The newly introduced conic domains prompted us to explore the common features 
of these previous works and led to a framework developed in \cite{X21}, in the setting of space of homogeneous 
type, for analysis based on the Fourier orthogonal expansion. Under the assumption of the existence of 
a spectral operator and highly localized kernels that are established, in all known cases, via the addition 
formulas, the framework establishes many previous results on the unit sphere, ball, and simplex, as 
well as on the finite rotational cones, including a characterization of best approximation by polynomials 
via the $k$-functional defined by the spectral operator, localized tight frames, positive cubature rules,
and various polynomial inequalities.   

The domains, equipped with appropriate weight functions, for which orthogonal polynomials possess the 
two essential properties, the spectral operator and the addition formula, enjoy rich geometric and algebraic 
structures that permit in-depth study of a variety of problems in higher dimensions. They are special, and 
each reveals phenomena of its own kind. The framework in \cite{X21} shows that they could be treated 
systematically. One naturally asks how many such domains there are. For $d =2$, the existence 
of the spectral operator is characterized in \cite{KrSh}, which shows there are primarily, up to affine 
transformations, five families, which are product Hermite and/or Laguerre polynomials, and classical 
orthogonal polynomials on the unit disk and on the triangle. All five have straightforward extensions to 
higher dimensions, but no classification is known in dimension three or higher. Motivated by the structure 
of classical orthogonal polynomials and those on conic domains studied in \cite{X20}, for which an 
orthogonal basis consists of wrapped products, see the definition in the next section, of orthogonal 
polynomials, we examined possible families among a large family of wrapped product orthogonal polynomials 
in \cite{X26}, and identified two new families on quadratic domains, as well as two families on the surfaces 
of the two domains, which include the ones on rotational cones as special cases.  

The spectral operator and the addition formula are separate entities, but intriguingly, they exist, or do not exist, 
simultaneously in just about all cases that are known. Thus, it is natural to ask if the two new families of
orthogonal polynomials in \cite{X26} that possess spectral operators also possess addition formulas. 
The purpose of this paper is to show that they do and establish these addition formulas. The two families 
are wrapped products of classical orthogonal polynomials on the unit ball with either classical Jacobi 
polynomials on the simplex or product Laguerre polynomials. The addition formula is given in terms of 
the Gegenbauer polynomials in the first case and in terms of the Laguerre polynomials in the second case. 

The paper is organized as follows. The next section is preliminary, where we review the basics of orthogonal 
polynomials of several variables, and recall the addition formulas for classical orthogonal polynomials that 
will be needed in the sequel. The addition formula for the wrapped product of the unit ball and the simplex 
is established in the third section, followed by the one for the wrapped product of the unit sphere and the simplex 
in the fourth section. The case for the wrapped product of the orthogonal polynomials on the unit ball and
the Laguerre polynomials is established in the fifth section, followed by the one on the corresponding 
surface in the sixth section. 

\section{Preliminary}
\setcounter{equation}{0}

The basics of multivariate orthogonal polynomials are recalled in the first subsection, followed by short subsections 
about orthogonal polynomials on the sphere, the unit ball, the simplex, and product Laguerre polynomials, which 
will serve as building blocks in our study of new orthogonal polynomials. The last subsection reviews the definition
of wrapped product orthogonal polynomials. 

\subsection{Orthogonal polynomials of several variables} 
Let $\Omega$ be a set in $\RR^d$, either a domain with non-empty interior or an algebraic surface. Let $W$ be a non-negative
weight function defined on $\Omega$, normalized to have unit integral on $\Omega$, so that the bilinear form 
$$
    \la f,g\ra_W = \int_{\Omega} f(\bm x) g(\bm x) W(\bm x) \d \bm x
$$
is a well-defined inner product on the space of polynomials restricted to $\Omega$, and orthogonal polynomials under this inner
product are well defined. Let $\CV_n^d(\Omega, W)$ be the space of orthogonal polynomials of degree $n$ under this inner product. 
If $\Omega$ has non-empty interior, then
\begin{equation}\label{eq:dimVn}
  \dim \CV_n^d(\Omega, W) = \binom{n+d-1}{n}, \qquad n = 0,1,2, \ldots. 
\end{equation}
If $\Omega$ is a quadratic surface, such as the unit sphere $\sph$ of $\RR^d$, we denote the domain by $\Omega_0$ instead of $\Omega$, 
then the orthogonal polynomials in $\CV_n^d(\Omega_0, W)$ consist of polynomials restricted to $\Omega_0$, so that 
\begin{equation}\label{eq:dimV0n}
    \dim \CV_n^d(\Omega_0,W) =  \binom{n+d-2}{n}+ \binom{n+d-3}{n-1}. 
\end{equation}

The reproducing kernel of $\CV_n^d(\Omega,W)$, denoted by $P_n(W; \cdot,\cdot)$, is uniquely determined by the property
$$
    \int_\Omega f(\bm y) P_n(W; \bm x, \bm y) W(\bm y)\d \bm y = f(\bm x), \qquad \forall f\in \CV_n^d(\Omega, W).
$$
If $\{P_{\nu,n}: 1 \le \nu \le \dim \CV_n^d(\Omega,W)\}$ is an orthogonal basis of $\CV_n^d(\Omega,W)$, then 
\begin{equation}\label{eq:reprod-kernel}
P_n(W; \bm x, \bm y) = \sum_{\nu=1}^{\dim\CV_n^d(\Omega,W)} \frac{P_{\nu,n}(\bm x) P_{\nu,n}(\bm y)}{\la P_{\nu,n},P_{\nu,n} \ra_W},
\end{equation}
and the kernel plays an important role in the study of the Fourier orthogonal series of $f \in L^2(\Omega,W)$ defined by
\begin{equation}\label{def:Fourier}
  f   = \sum_{n=0}^\infty   \sum_{\nu=1}^{\dim\CV_n^d(\Omega,W)}
   \wh f_{\nu,n} P_{\nu,n}, 
  \qquad \wh f_{\nu,n} = \frac{\la f, P_{\nu,n}\ra_W}{\la P_{\nu,n},P_{\nu,n} \ra_W}. 
\end{equation}
Indeed, let $\proj_n(W):  L^2(\Omega, W)\mapsto \CV_n^d(\Omega, W)$ be the projection operator; then the Fourier 
series can be written as 
$$
   f= \sum_{n=0}^\infty  \proj_n(W; f), \quad \hbox{where} \quad 
     \proj_n f = \sum_{\nu=1}^{\dim\CV_n^d(\Omega,\varpi)} \wh f_{\nu,n} P_{\nu,n},
$$
and the operator $\proj_n(W)$ is an integral operator with $P_n(W; \cdot,\cdot)$ as its kernel, 
\begin{equation}\label{def:projPn}
  \proj_n(W; f, \bm x) = \int_\Omega f(\bm y) P_n(W; \bm x, \bm y) W(\bm y) \d \bm y, \quad f\in L^2(\Omega, W).
\end{equation}

For these definitions and orthogonal polynomials of several variables in general, we refer to \cite{DX}. We are interested 
in the case when $P_n(W; \cdot, \cdot)$ satisfies a closed-form formula, which is what we call an addition formula. 
Such a formula exists only in a few cases but provides a powerful tool when it does. We review what is known for 
such a formula below. 

\subsection{Spherical harmonics}
These are orthogonal polynomials on the unit sphere $\sph$ of $\RR^d$. Let $\Delta$ be the usual Laplace operator of $\RR^d$.
A spherical harmonic $Y$ of degree $n$ is a homogeneous polynomial of degree $n$ in $d$ variables that satisfies $\Delta Y =0$.
Let $\CH_n^d$ denote the space of spherical harmonics of degree $n$. Then its dimension is given 
by \eqref{eq:dimV0n}. For $n \in \NN_0$ let $\{Y_\ell^n: 1 \le \ell \le \dim \CH_n^d\}$ be an orthonormal basis of $\CH_n^d$; 
then 
$$
   \frac{1}{\o_d} \int_\sph Y_\ell^n (\xi) Y_{\ell'}^m (\xi)\d\s(\xi) = \delta_{\ell,\ell'} \delta_{m,n},
$$
where $\o_d= {2 \pi^{\f{d}{2}}}/{\Gamma(\f{d}{2})}$ denotes the surface area of $\sph$. 
%$$
%\o_d = \frac{2 \pi^{\f{d}{2}}}{\Gamma(\f{d}{2})}.
%$$ 
An explicit basis of $\CH_n^d$ can be given in spherical coordinates in terms of the Jacobi polynomials 
(see, for example, \cite[p. 116]{DX}). 

Let $\sP_n(\cdot,\cdot)$ denote the reproducing kernel of the space $\CH_n^d$. It satisfies an elegant closed form formula 
(cf. \cite[(1.2.3) and (1.2.7)]{DaiX}),  
\begin{equation} \label{eq:additionF}
  \sP_n(x,y) = \sum_{\ell =1}^{\dim \CH_n^d} Y_\ell^n (\xi) Y_\ell^n(\eta) = Z_n^{\f{d-2}{2}} (\la \xi, \eta \ra),
     \quad \xi, \eta \in \sph, 
\end{equation}
where $Z_n^\l$ is a polynomial of degree $n$ in one variable and it is given by 
\begin{equation} \label{eq:Zn}
    Z_n^\l(t) = \frac{n+\l}{\l} C_n^\l(t), \qquad \l = \frac{d-2}{2},
\end{equation}
in terms of the Gegenbauer polynomial $C_n^\l$, the latter is the classical orthogonal polynomial with respect to the 
weight function $(1-t^2)^{\l - \f12}$ on $[-1,1]$. As we mentioned in the introduction, the identity \eqref{eq:additionF} 
is known as the addition formula of spherical harmonics, and this is the name that we have adopted for closed-form 
formulas of reproducing kernels in general. 

\subsection{Orthogonal polynomials on the unit ball} Let $\BB^d$ denote the unit ball of $\RR^d$. The classical orthogonal 
polynomials on the unit ball are orthogonal with respect to the inner product 
\begin{equation}\label{eq:B-ipd}
  \la f, g\ra_\mu^\BB = \bm b_\mu^\BB \int_{\BB^d} f(\bm x) g(\bm x) \bm W_\mu^\BB (\bm x) \d  \bm x,
\end{equation}
where the weight function is defined by 
\begin{equation}\label{eq:B-weight}
  \bm W^\BB_\mu(\bm x) = \big(1-\|\bm x\|^2\big)^{\mu-\f12}, \qquad \mu > -\tfrac12,
\end{equation}
and the normalization constant $\bm b_\mu^\BB$ of the weight function is given by 
\begin{equation}\label{eq:bB}
\bm b_\mu^\BB = \frac{\Gamma(\mu+\f{d+1}{2})}{\pi^{\f d 2}\Gamma(\mu+\f12)}.
\end{equation}
%which is the normalization constant so that $\la 1, 1\ra_\mu^\BB =1$. 

Let $\CV_n\left(\BB^d, \bm W_\mu^\BB\right)$ be the space of orthogonal polynomials of degree $n$ with respect to $\bm W^\BB_\mu$. 
An orthogonal basis for this space can be given explicitly in terms of the Gegenbauer polynomials in Cartesian coordinates. For 
$\bm x \in \RR^d$, define $\bm x_0 =0$ and $\bm x_j = (x_1,\ldots, x_j)$ for $1 \le j \le d$ and for $\kb \in \NN_0^d$, define 
$\kb^j = (k_j, \ldots, k_d)$ for $1 \le j \le d$ and $\kb^{d+1} =0$. For $\kb \in \NN_0$ with $|\kb| = n$, let 
\begin{equation}\label{eq:OP_BB}
 \bm B_{\kb, n}^\mu(\bm x) = \prod_{j=1}^d \left(1-\|\bm x_{j-1}\|^2\right)^{\frac{k_j}{2}} C_{k_j}^{ \mu+ |\kb^{j+1}| + \frac{d-j}{2}} \bigg(\frac{\bm x}{\sqrt{1-\| \bm x_{j-1} \|^2}} \bigg). 
\end{equation}
%where $\l_j = \mu+ |\kb^{j+1}| + \frac{d-j}{2}$. 
Then $\{\bm B_{\kb,n}^\mu: |\kb|=n, \, \kb \in \NN+0^d\}$ is an orthogonal basis of 
$\CV_n\left(\BB^d, \bm W_\mu^\BB\right)$; see \cite[p. 143]{DX} for this basis as well as the formula 
for the $L^2$ norm of $\bm B_{\kb, n}^\mu$. There are other explicit orthogonal bases on the unit ball, 
see, for example, \cite[Chapter 4]{DX}. 
  
Let $\bm P_n(\bm W_\mu^\BB; \cdot, \cdot)$ be the reproducing kernel of the space $\CV_n(\BB^d, \bm W_\mu^\BB)$. 
The kernel satisfies an addition formula, which is the first such formula for orthogonal polynomials on domains in
higher dimensions \cite{X99}. For $\bm x, \bm y \in \BB^d$, 
\begin{align} \label{eq:add-ball}
 \bm P_n\left(\bm W_\mu^\BB; \bm x, \bm y\right) = c_{\mu-\f12} \int_{-1}^1 Z_{n}^{\mu+\frac{d-1}{2}} 
   \left(\la \bm x, \bm y \ra + s \sqrt{1-\|\bm x\|^2}  \sqrt{1-\|\bm y\|^2} \right) 
     (1-s^2)^{\mu-1} \d \bm s,   
\end{align}
where $\mu > 0$, $Z_n^\l$ is defined as in \eqref{eq:Zn}, and the constant $c_\mu$ is given by 
$$
  c_\mu:= \left[ \int_{-1}^1(1-s^2)^{\mu-\f12} \d s \right]^{-1} = \frac{\Gamma(\mu+1)}{\pi^{\f12} \Gamma(\mu+\f12)}. 
$$
the formula holds for $\mu =0$ under the limit 
\begin{equation}\label{eq:limitInt}
   \lim_{\mu \to 0+}  c_{\mu-\f12} \int_{-1}^1 f(t) (1-t^2)^{\mu-1} \d t = \frac{f(1) + f(-1)}{2}. 
\end{equation}
 
\subsection{Orthogonal polynomials on the simplex}
Let $\triangle^d$ be the standard simplex in the positive quadrant $\RR_+^d$, defined by 
$$
  \triangle^d = \{\bm y \in \RR^d: y_1 \ge 0, \ldots, y_d \ge 0, |\bm y| <1\}, 
$$
where $|\bm y| = y_1+\cdots + y_d$, a notation that will be used throughout this work. The classical orthogonal
polynomials on the simplex are defined via the inner product 
\begin{equation}\label{eq:T-ipd}
  \la f, g\ra_\bk^\triangle = \bm b_\bk^\triangle \int_{\triangle^d} f(\bm x) g(\bm x) \bm W_\bk^\triangle (\bm x) \d  \bm x,
\end{equation}
where the weight function is the Jacobi weight of several variables defined by 
\begin{equation}\label{eq:W_simplex}
\bm W_\bk^\triangle(\bm y) = \prod_{i=1}^d y_i^{\k_i} (1-|\bm y|)^{\k_{d+1}}, \quad \bm y \in \triangle^{d},
\end{equation}
and the normalization constant $\bm b_\bk^\triangle$ for this weight function is given by
\begin{equation}\label{eq:bT}
  \bm b_\bk^\triangle =  \frac{\Gamma(|\bk| + d+1)}{\prod_{i=1}^{d+1} (\k_i+1)}.
\end{equation}

Let $\CV_n\left(\triangle^d, \bm W_\bk^\triangle\right)$ be the space of orthogonal polynomials of degree $n$ 
with respect to $\bm W^\triangle_\bk$. For $\bm y \in \RR^d$, we again define $\bm y_0 =0$ and 
$\bm y_j = (y_1,\ldots, y_j)$ for $1 \le j \le d$ and for $\bk \in \RR^{d+1}$, define $\bk^j = (\k_j, \ldots, \k_{d+1})$ 
for $1 \le j \le d$, and define likewise $\kb^j$ for $\kb \in \NN_0^d$ but with $\kb^{d+1} =0$. We then define 
\begin{equation}\label{eq:OP_TT}
 \bm T_{\kb, n}^\bk(\bm y) = \prod_{j=1}^d \left( 1-|\bm y_{j-1}| \right)^{k_j} 
     P_{k_j}^{(|\bk^{j+1}| +2 |\kb^{j+1}| + d - j, \k_j)} \bigg(\frac{2 y_j}{1-| \bm y_{j-1}|}-1\bigg)
\end{equation}
in terms of the Jacobi polynomial $P_n^{(\a,\b)}$. Then $\left \{ \bm T_{\kb, n}^\bk: |\kb| =n, \, \kb \in \NN_0^d\right\}$
is an orthogonal basis for $\CV_n^d(\triangle^d, \bm W_\bk^\triangle)$;
see \cite[p. 150]{DX} for this basis as well as the formula for the $L^2$ norm of $\bm T_{\kb, n}^\mu$ in \cite[p. 151]{DX}. 
  
Let $ \bm P_n\left(\bm W_\bk^\triangle; \cdot,\cdot\right)$ be the reproducing kernel of $\CV_n\left(\triangle^d, \bm W_\bk^\triangle\right)$. 
It satisfies an addition formula: for $\bm x, \bm y \in \triangle^d$,
 \begin{align} \label{eq:add-simplex}
 \bm P_n\left(\bm W_\bk^\triangle; \bm x, \bm y\right) = c_\bk \int_{[-1,1]^{d+1}} Z_{2n}^{|\bk|+d}
  \big(\xi_\triangle(\bm x, \bm y, \bm t) \big)
   \prod_{i=1}^{d+1} (1-t_i^2)^{\k_i-\f12} \d \bm t,   
\end{align}
where
$$
\xi_\triangle(\bm x, \bm y, \bm t) = \sqrt{x_1 y_1}\, t_1 + \ldots \sqrt{x_d y_d} \, t_d + \sqrt{1-|\bm x|} \sqrt{1-|\bm y|} \, t_{d+1} 
$$
and the formula holds under the limit \eqref{eq:limitInt} if one or more $\k_i$ equal to $-\f12$. 

\subsection{Product Laguerre polynomials}
The classical Laguerre polynomials, $L_n^\a$, are orthogonal with respect to the weight function 
$x^\a \e^{-x}$ on $\RR_+  = [0, \infty)$, where $\a > - 1$. For $\bk \in \RR^d$ with $k_i > -1$, $1 \le i \le d$,
the product Laguerre weight is defined by
\begin{equation}\label{eq:W_L}
    \Wb_\bk^L(\xb) = \prod_{i=1}^d x_i^{\k_i} \e^{- |\xb|}, \qquad \xb \in \RR_+^d,
\end{equation}
and $\bb_\bk^L$ is the normalization constant defined by $\la 1,1\ra_\bk^L = 1$, which satisfies
$$
   \bb_\bk^L = \frac{1}{ \Gamma(\k_1+1) \cdots \Gamma(\k_d+1)}. 
$$
An orthogonal basis for $\CV_n(\RR_+^d, \Wb_\bk^L)$ is given by the product Laguerre polynomials 
\begin{equation}\label{eq:prod_L}
    \Lb_{\jb}^\bk (\xb) = L_{j_1}^{\k_1}(x_1) \cdots  L_{j_d}^{\k_d}(x_d), \qquad |\jb|= n. 
\end{equation}
 
Let $J_\a$ be the usual Bessel function and let  $j_{\a}(z) := z^{-\a} J_\a(z)$. For $\a > -\f12$, the Laguerre 
polynomials satisfy a product formula, which is classical \cite{Wa}, %; see also \cite[p. 142]{Th}, 
\begin{align} \label{eq:productL1}
 \frac{L_n^\a(t) L_n^\a(s)}{L_n^\a(0)} = 
     & \frac{2^{\a} \Gamma(\a+1)}{\sqrt{2\pi}} \int_0^\pi L_n^{\a} \left(t +s + 2 \sqrt{t s} \cos \t \right)  \\
       &\times \e^{- \sqrt{t s}  \cos \t}  j_{\a - \f{1}{2}} \left(\sqrt{t s} \sin \t\right) (\sin \t)^{2 \a}\d\t.  \notag
\end{align}
Let $\Pb_n\big(\Wb_\bk^L; \cdot,\cdot\big)$ be the reproducing kernel of the space $\CV_n\big(\RR_+^d, \Wb_\bk^L\big)$.
This kernel satisfies an addition formula when $\k_i > -\f12$, $1 \le i \le d$,
\begin{align}\label{eq:addL}
\Pb_n\big(\Wb_\bk^L; \xb,\yb \big) 
 =\,  & \cb_\bk^L \int_{[0,\pi]^d} L_n^{|\bk|+ d-1} \bigg(|\xb| + |\yb| + 2 \sum_{i=1}^d \sqrt{x_i y_i} \cos \t_i \bigg) \\
      & \times \e^{- \sum_{i=1}^d \sqrt{x_i y_i}  \cos \t_i}  \prod_{i=1}^d j_{\k_i - \f{1}{2}} \left(\sqrt{x_i y_i} \sin \t_i \right) 
      \prod_{i=1}^d (\sin \t_i)^{2 \k} \d \bm \t, \notag
\end{align}
 where the constant $\cb_\bk^L$ is given by 
\begin{equation} \label{eq:const_L}
   \cb_\bk^L = \frac{2^{|\bk|}} { (2 \pi)^{\f d 2}}  \prod_{i=1}^d  \Gamma(\k_i+1).
\end{equation}
This formula follows from the product formula \eqref{eq:productL1} and the identity 
$$
    \Pb_n\big(\Wb_\bk^L; \xb, 0 \big) = L_n^{|\kb|+d-1}(|\xb|),
$$
which follows from the generating function of the Laguerre polynomials (cf. proof of Theorem 2.1 of \cite{X00}). 
In the case one or more $\k_i = -\f12$, a more complicated formula can be derived via integration by parts, which
is a sum of two terms in one variable \cite{Ma} and will be far more involved in several variables. We will not discuss
this limiting case in this paper. 

To define wrapped product orthogonal polynomials studied in \cite{X26}, we need a variant of product
Laguerre polynomials on the domain $\RR_\Sigma^d$ defined by
\begin{equation} \label{eq:R-Sig}
  \RR_\Sigma^d := \left \{\yb \in \RR^d: y_1\ge 0, \ldots, y_{d-1} \ge 0, |\yb| \ge 0\right\}.
\end{equation}
Since $y_d$ could be negative as long as $|\yb| \ge 0$, so $\RR_\Sigma^d$ is different from $\RR_+^d$, but 
the mapping $\yb \in \RR_\Sigma^d \mapsto \xb \in \RR_+^d$ defined by 
$$
    y_1 = x_1, \ldots, y_{d-1} = x_{d-1},\, y_d = x_d - |\xb'|, \qquad \xb = (\xb', x_d)
$$
is a bijection and satisfies $|\xb|= |\yb'|+ |\yb|$. The Laguerre weight function $\Wb_\bk^L(\xb)$ becomes, 
under this change of variables, 
\begin{equation} \label{eq:LaguerreSig}
   \Wb_\bk^\Sigma(\yb) = \prod_{i=1}^{d-1} y_i^{\k_i} |\yb|^{\k_d} \e^{-|\yb|-|\yb'|}. 
\end{equation}
In particular, an orthogonal basis of $\CV_n\big(\RR_\Sigma^d, \Wb_\bk^\Sigma\big)$ consist of 
\begin{equation} \label{eq:basis_S}
  \hat \Lb_{\jb,n}^\bk(\yb) := % L_{j_1}^{\k_1}(y_1) \cdots  L_{j_{d-1}}^{\k_{d-1}}(y_{d-1})L_{j_d}^{\k_d}(|\yb|) 
      \Lb_{\jb,n}^\bk (y_1,\ldots, y_{d-1}, |\yb|), \qquad |\jb|= n.
\end{equation}

\subsection{Wrapped product and polynomials on domains} 
%\begin{defn} \label{defn:wrap}
Let $d_1$ and $d_2$ be positive integers and let $\Omega_1^{d_1} \subset \RR^{d_1}$ and 
$\Omega_2^d \subset \RR^{d_2}$ be two domains. Let $\Wb_1$ be a weight function on $\RR^{d_1}$ 
and $\Wb_2$ be the weight functions defined on $\Omega_2^{d_2}$. The wrapped product weight 
function $\Wb_1 \rtimes \Wb_2$ is defined by 
\begin{equation}\label{eq:wrapW}
\Wb^{\Omega_1\rtimes \Omega_2}(\xb,\yb) =\Wb_1\left( \frac{\xb} {\rho(\yb)} \right) \Wb_2(\yb) , 
   \qquad (\xb, \yb) \in \Omega_1^{d_1} \rtimes \Omega_2^{d_2}%\VV^{d_1,d_2},
\end{equation}
where the wrapped domain $\Omega_1^{d_1} \rtimes \Omega_2^{d_2}$ is defined by 
\begin{equation} \label{eq:wrapDomain}
\Omega_1^{d_1}  \rtimes \Omega_2^{d_2}:= \left\{(\xb,\yb): \frac{\xb}{\rho(\yb)} \in \Omega_1^{d_1} , 
\quad \yb \in \Omega_2^{d_2} \right\}, 
\end{equation}
in which $\rho: \Omega_2^{d_2} \mapsto \RR_+$ is nonnegative and specified according to one of the two cases,
\begin{enumerate}[    ]
\item[] {\bf Case 1}. $\rho$ is a polynomial of degree $1$; \smallskip
\item[] {\bf Case 2}. $\rho$ is the square root of a polynomial of degree at most $2$, and $\Omega_1^{d_1} $ 
and $\Wb_1$ are both centrally symmetric, that is, $\xb \in \Omega$ implies $- \xb \in \Omega$ and 
$\Wb(\xb) = \Wb(-\xb)$ for all $\xb \in \Omega$.
\end{enumerate}
 
This is the definition given in \cite{X26}, which is motivated by \cite{K75} for $d =2$, as well as classical families 
of orthogonal polynomials in several variables \cite{DX} and those families on rotationally conic domains 
studied recently in \cite{X20}. Let $\{\Pb_{\kb, m}: |\kb| = m, \kb \in \NN_0^{d_1}\}$ be an orthogonal basis of 
$\CV_n(\Omega_1^{d_1}, \Wb_1)$ and denote the square of its norm by $\hb_{\kb,m}(\Wb_1)$. For each 
positive integer $m$, let $\big\{\Rb^{(m)}_{\jb, n}: |\jb| = n, \jb \in \NN_0^{d_2}\big\}$ be an orthogonal basis for 
$\CV_n\left(\Omega_2^{d_2}, \rho^{2m + d_1} \Wb_2\right)$ and denote the square of its norm by 
$\hb_{\jb,m}(\rho^{2m+d_1}\Wb_2)$. Then the wrapped product orthogonal polynomials are defined in 
\eqref{eq:wrapOP}.

\begin{prop} \label{prop:wrapped_orth}
Let polynomials $\Qb_{\jb, \kb, m}^n$ of $d_1+d_2$ variables be defined by
\begin{equation}\label{eq:wrapOP}
   \Qb_{\jb, \kb, m}^n(\xb, \yb) = \Rb_{\jb,n-m}^{(m)}(\yb) [\rho(\yb)]^m \Pb_{\kb,m}\left(\frac{\xb}{\rho(\yb)}\right),  
\end{equation}
where $0 \le m \le n$, $|\kb| = m$ with $\kb \in \NN_0^{d_1}$, and $|\jb| = n-m$ with $\jb \in \NN_0^{d_2}$. 
Then the collection $\left\{  \Qb_{\jb, \kb, m}^n: |\kb| = m,\,  |\jb| = n-m, 0 \le m \le n\right\}$ is an orthogonal basis 
of the space $\CV_n\big(\Omega_1^{d_1}  \rtimes \Omega_2^{d_2}, \Wb^{\Omega_1\rtimes \Omega_2}\big)$.
\end{prop}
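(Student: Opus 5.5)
The proof has three parts: show each $\Qb_{\jb,\kb,m}^n$ is a polynomial of degree $n$, compute the inner products by a change of variables, and count dimensions.

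\textbf{Polynomiality and degree.} Write $\Pb_{\kb,m}(\xb) = \sum_{|\bm\a|\le m} c_{\bm\a}\xb^{\bm\a}$. Then
$$
[\rho(\yb)]^m \Pb_{\kb,m}\bigl(\xb/\rho(\yb)\bigr) = \sum_{|\bm\a|\le m} c_{\bm\a}\, \xb^{\bm\a}\, \rho(\yb)^{m-|\bm\a|}.
$$
In Case 1, $\rho$ is a polynomial of degree $1$, so each term is a polynomial of degree at most $m$. In Case 2, $\Wb_1$ and $\Omega_1$ are centrally symmetric. Hence $\Pb_{\kb,m}(-\xb) = (-1)^m \Pb_{\kb,m}(\xb)$, which we may assume after choosing the basis with this parity. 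This parity holds for any basis of $\CV_m$ under a symmetric weight, since the reflection preserves $\CV_m$ and every element has parity $m$. Therefore only $|\bm\a|\equiv m \pmod 2$ occurs, and $\rho^{m-|\bm\a|}=(\rho^2)^{(m-|\bm\a|)/2}$ is a polynomial of degree at most $m-|\bm\a|$. Multiplying by $\Rb^{(m)}_{\jb,n-m}(\yb)$, of degree $n-m$, gives total degree at most $n$.

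\textbf{Orthogonality.} Substitute $\xb=\rho(\yb)\ub$ with $\ub\in\Omega_1^{d_1}$, so that $\d\xb = \rho(\yb)^{d_1}\d\ub$. The inner product of $\Qb_{\jb,\kb,m}^n$ with $\Qb_{\jb',\kb',m'}^{n'}$ becomes
$$
\int_{\Omega_2} \Rb^{(m)}_{\jb,n-m}\Rb^{(m')}_{\jb',n'-m'}\,\rho^{m+m'+d_1}\Wb_2\,\d\yb \int_{\Omega_1}\Pb_{\kb,m}\Pb_{\kb',m'}\Wb_1\,\d\ub .
$$
The $\ub$-integral vanishes unless $m=m'$ and $\kb=\kb'$, and then equals $\hb_{\kb,m}(\Wb_1)$. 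With $m=m'$, the $\yb$-integral carries the weight $\rho^{2m+d_1}\Wb_2$, so it vanishes unless $n-m=n'-m'$ and $\jb=\jb'$. This gives mutual orthogonality, the norms $\hb_{\kb,m}(\Wb_1)\,\hb_{\jb,n-m}(\rho^{2m+d_1}\Wb_2)$, and in particular $n=n'$.

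\textbf{Membership in $\CV_n$ and basis property.} Each $\Qb^n$ is orthogonal to every $\Qb^{n'}$ with $n'<n$. Counting gives
$$
\sum_{m=0}^n \binom{m+d_1-1}{m}\binom{n-m+d_2-1}{n-m} = \binom{n+d_1+d_2-1}{n},
$$
which is $\dim\Pi_n^{d_1+d_2}-\dim\Pi_{n-1}^{d_1+d_2}$. By induction on $n$, the polynomials with $n'\le n$ are linearly independent, being orthogonal and nonzero. Their number equals $\dim \Pi^{d_1+d_2}_n$, so they span the space of polynomials of degree at most $n$ restricted to the domain. It follows that each $\Qb^n$ has exact degree $n$ and is orthogonal to all polynomials of lower degree, so it lies in $\CV_n$. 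The $\Qb^n$ are independent, and their number equals $\dim \CV_n$ by \eqref{eq:dimVn}, so they form a basis.

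\textbf{Main obstacle.} The delicate point is the Case 2 parity argument that makes $\rho^m\Pb_{\kb,m}(\xb/\rho)$ a polynomial. One also needs the bases of $\CV_m(\Omega_1,\Wb_1)$ to consist of parity-$m$ elements, which holds automatically by the symmetry argument above.
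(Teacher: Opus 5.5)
Your proof is correct. The paper does not reprove this statement; it quotes it as Proposition 3.2 of \cite{X26}. Your route is the standard argument for such bases: polynomiality via a parity argument in Case 2, factorization of the inner product under $\xb=\rho(\yb)\ub$, and a dimension count that also gives membership in $\CV_n$.

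The only blemish is that your justification of $\Pb_{\kb,m}(-\xb)=(-1)^m\Pb_{\kb,m}(\xb)$ is circular as worded. The one-line reason is that, by central symmetry, $P(-\xb)-(-1)^mP(\xb)$ lies in $\CV_m(\Omega_1^{d_1},\Wb_1)$ while having degree at most $m-1$, so it is orthogonal to itself and vanishes.
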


This is Proposition 3.2 in \cite{X26}. The ordinary product is the wrapped product with $\rho(\yb) = 1$. 
By \eqref{eq:OP_BB}, the Gegenbauer polynomials on the unit ball $\BB^d$ satisfy 
$$
\Bb_{\jb, n}^{\mu,d} (\xb,\yb) = \Bb_{\jb, n-m}^{\mu, d_2}(\yb) (1-|\yb|)^m \Bb_{\kb,m}^{\mu,d_1} 
\bigg(\frac{\xb}{\sqrt{1-\|\yb\|^2}}\bigg), 
$$
where $d = d_1+d_2$, which corresponds to $\BB^d = \BB^{d_1} \rtimes \BB^{d_2}$ with $\rho(\yb) = 1-|\yb|$. 
The similar structure holds also for the Jacobi polynomials on the simplex \eqref{eq:OP_TT}. 

The wrapped product orthogonal polynomials are studied in \cite{X26}, where those that are eigenfunctions of
a second-order differential operator are identified, which consists of two new families on 
$\BB^{d_1} \rtimes \triangle^{d_2}$ and $\BB^{d_1} \rtimes \RR_\Sigma^{d_2}$ that have not been studied
before if either $d_1 > 1$ or $d_2 >1$. These are the two families that we shall study in the next section and
Section 5. 

\section{Ball-Simplex polynomials on the domain $\BB^{d_1} \rtimes \triangle^{d_2}$}
\setcounter{equation}{0}

In this section, we consider the first of the two new families of orthogonal polynomials in \cite{X26} that possess 
a spectral operator, which consists of orthogonal polynomials on $\BB^{d_1} \rtimes \triangle^{d_2}$.
These polynomials are reviewed in the first subsection. The addition formula for the reproducing kernels 
is derived in the second subsection. The third section consists of a concise introduction to applications of 
the addition formula to the Fourier orthogonal expansions.
 
\subsection{Orthogonal polynomials}
Let $d_1$ and $d_2$ be two positive integers. Let $\BB^{d_1} \rtimes \triangle^{d_2}$ be the quadratic domain 
in $\RR^{d_1+d_2}$ defined by
$$
 \BB^{d_1}\! \rtimes \triangle^{d_2} = \left\{(\bm x,\bm y) \in \RR^{d_1} \times \RR_+^{d_2}: \|\bm x\| \le 1- |\bm y| \le 1\right\}.
$$
For $\mu > -\f12$ and $\bm\kappa \in \RR^{d_2}$ such that $\k_i > -1$, $1 \le i \le d_2$, we define  
$$
  \bm W_{\mu,\bk}^{\BB\rtimes \triangle}(\bm x,\bm y) = \prod_{i=1}^{d_2} |y_i|^{\k_i} \left[ (1-|\bm y|)^2- \|\bm x\|^2 \right]^{\mu - \f12}  %(1-|\bm y|)^{\k_{d_2+1}}
$$
for $(\xb,\yb) \in \BB^{d_1}\! \rtimes \triangle^{d_2}$ and consider orthogonal polynomials with respect to the 
inner product defined by
\begin{equation} \label{eq:ipdOmega}
  \la f, g\ra_{\mu, \bk}^{\BB\rtimes \triangle}  = \bm b_{\mu,\bk}^{\BB\rtimes \triangle}
     \int_{\BB^{d_1} \! \rtimes \triangle^{d_2}} f(\bm x,\bm y) g(\bm x,\bm y) \bm W_{\mu,\bk}(\bm x,\bm y) \d \bm x \d \bm y, 
\end{equation}
where $ \bm b_{\mu,\bk}$ is the normalization constant so that $ \la 1, 1\ra_{\mu, \bk} =1$ and it satisfies 
\begin{equation} \label{eq:bOmega}
  \bm b_{\mu,\bk}^{\BB\rtimes \triangle} = \bm b^\BB_\mu \times \bm b^\triangle_{\bm \gamma} 
       \quad\hbox{with}\quad \bm \gamma = (\bk, d_1+ 2\mu -1) \in \RR^{d_2+1}, 
\end{equation}
in terms of the normalization constant $\bm b_\mu^\BB$ in \eqref{eq:bB} for $\bm W_\mu^\BB$ and the constant
$\bm b_{\bm \gamma}^\triangle$ in \eqref{eq:bT} of $\bm W_{\bm \gamma}^\triangle$. 

For $d_1, d_2 \ge 1$, orthogonal polynomials on $\BB^{d_1} \rtimes \triangle^{d_2}$ equipped with 
$\bm W_{\mu,\bk}^{\BB\rtimes \triangle}$ are studied recently in \cite{X26}, whereas the case $(d_1,d_2) = (d,1)$
has been studied in \cite{X20}, where the variables were chosen as $(\xb,t)$, for which the domain is a rotating cone 
in $\RR^{d+1}$. Let $\CV_n\big(\BB^{d_1} \rtimes \triangle^{d_2}, \bm W_{\mu,\bk}^{\BB\rtimes \triangle}\big)$ be 
the space of orthogonal polynomials of degree $n$ with respect to the inner product $\la \cdot, \cdot\ra_{\mu, \bk}$. 
Then 
$$
  \dim \CV_n\big(\BB^{d_1} \rtimes \triangle^{d_2}, \bm W_{\mu,\bk}^{\BB\rtimes \triangle}\big)  =\binom{n+d_1+d_2 -1}{n}. 
$$
%Parametrising the domain $\VV^{d_1,d_2}$ by settign $\bm x = (1- | \bm y|) \bm u$, it follows readily that 
%\begin{equation} \label{eq:intOmega}
%  \int_{\VV^{d_1,d_2}} f(\bm x, \bm y) \d \bm x \d \bm y =   \int_{\triangle^{d_2}} (1-|\bm y|)^{d_1} \int_{\BB^{d_1}}
%     f\big( (1-|\bm y|) \bm u, \bm y\big) \d \bm u\, \d \bm y.
%\end{equation}
 Let $\{\bm B_{\bm k, m}^{\mu}: |\bm k|= m, \, \bm k \in \NN_0^{d_1}\}$ be an orthogonal basis for 
$\CV_m(\BB^{d_1},\bm W^\BB_\mu)$ such as the one given in \eqref{eq:OP_BB}, and let 
$\{\bm T_{\bm j, m}^{\bg}: |\bm j| = m, \, \bm j\in \NN_0^{d_2}\}$ be an orthogonal basis for
$\CV_{m}\left(\triangle^{d_2},\bm W^\triangle_{\bg}\right)$, such as the one given in \eqref{eq:OP_TT}. 
We denote by $\bm h_{\bm k,m}^{\mu,\BB}$ and $\bm h_{\bm j,m}^{\bg,\triangle}$ the squared norm
$$
  \bm h_{\bm k,m}^{\mu, \BB} =  \langle \bm B_{\bm k, m}^{\mu}, \bm B_{\bm k, m}^{\mu}\rangle_{\mu}^\BB
 \quad and \quad \bm h_{\bm j,m}^{\bg,\triangle} = \langle \bm T_{\bm j, m}^{\bg}, \bm T_{\bm j, m}^{\bg}  \rangle_{\bg}^\triangle.
$$

\begin{prop}
%Let $\{\bm B_{\bm k, m}^{\mu}: |\bm k|= m, \, \bm k \in \NN_0^{d_1}\}$ be an orthogonal basis for 
%$\CV_m(\BB^{d_1},\bm W_\BB^\mu)$ with norm $\bm H_{\bm k, m}^\mu$ and 
%let $\{\bm T_{\bm j, n-m}^{(\bk, \alpha +2m)}: |\bm j| = n-m, \, \bm j\in \NN_0^{d_2}\}$
%be an orthonormal basis for $\CV_{n-m}\left(\triangle^{d_2},\bm W^\triangle_{(\bk, \alpha+2m)}\right)$, where $\alpha = d_1+ 2 \mu -1$.
For $\mu > - \f12$, $\bk \in \RR^{d_2}$ with $\k_j > -1$, $1 \le j \le d_2$, define 
\begin{equation} \label{eq:Qbasis}
 \bm Q_{\bm j, \bm k, m}^n(\bm x,\bm y) = \bm T_{\bm j, n-m}^{(\bk, \alpha+2m)}(\bm y) (1-|\bm y|)^m 
      \bm B_{\bm k, m}^\mu \left(\frac{\bm x}{1-|\bm y| }\right),
\end{equation}
where $\alpha = 2 \mu + d_1-1$. Then 
$\{\bm Q_{\bm j, \bm k, m}^n: |\bm k| = m, \, |\bm j| = n-m, \, 0 \le m  \le n, \, \bm k \in \NN_0^{d_1}, \, \bm j \in \NN_0^{d_2}\}$ 
is an orthogonal basis of $\CV_n\big(\BB^{d_1} \rtimes \triangle^{d_2}, \bm W_{\mu,\bk}^{\BB\rtimes \triangle}\big)$. 
Moreover, the squared norm of $\bm Q_{\bm j, \bm k, m}^n$ is given by 
\begin{equation} \label{eq:Q_OPNorm}
   \bm h_{\bm j, \bm k, m}^{n; \mu,\bk}  %= \la \bm Q_{\bm j, \bm k, m}^n, \Qb_{\bm j,\bm k, m}^n \ra_{\mu, \bk} 
     %  =   \frac{\bm b^\triangle_{(\bk,\alpha)}}{\bm b^\triangle_{(\bk, \alpha + 2m)}}
       = \frac{(2\mu+d_1)_{2m}} {(|\bk|+ 2\mu + d_1 + d_2)_{2m}}\bm h_{\bm k,m}^{\mu, \BB}  
       \bm h_{\bm j,m}^{(\bk, \a+2m), \triangle}. 
\end{equation}
\end{prop}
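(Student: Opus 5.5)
This is the wrapped-product situation of Proposition~\ref{prop:wrapped_orth} with $\Omega_1^{d_1} = \BB^{d_1}$, $\Wb_1 = \bm W_\mu^\BB$, $\Omega_2^{d_2} = \triangle^{d_2}$ and $\rho(\yb) = 1-|\yb|$. Here $\rho$ is a degree-one polynomial, so we are in Case 1. The plan is first to identify the weight. We have
$$
\bm W_\mu^\BB\Big(\frac{\xb}{1-|\yb|}\Big) = (1-|\yb|)^{1-2\mu}\big[(1-|\yb|)^2-\|\xb\|^2\big]^{\mu-\f12},
$$
so $\bm W_{\mu,\bk}^{\BB\rtimes\triangle} = \Wb_1(\xb/\rho(\yb))\,\Wb_2(\yb)$ with $\Wb_2(\yb) = \prod_i y_i^{\k_i}(1-|\yb|)^{2\mu-1}$. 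Then $\rho^{2m+d_1}\Wb_2 = \bm W^\triangle_{(\bk,\a+2m)}$ with $\a = 2\mu+d_1-1$. Proposition~\ref{prop:wrapped_orth} therefore applies with $\Rb^{(m)}_{\jb,n-m} = \bm T_{\jb,n-m}^{(\bk,\a+2m)}$, and this gives the orthogonality and basis claims directly. To be self-contained I would also indicate the mechanism. Substitute $\xb = (1-|\yb|)\ub$, so that $\d\xb = (1-|\yb|)^{d_1}\d\ub$. The integral then factors as
$$
\int_{\triangle^{d_2}}\Big[\int_{\BB^{d_1}}\cdots\,\d\ub\Big]\cdots\,\d\yb .
$$
Orthogonality of $\bm B^\mu_{\kb,m}$ in $\ub$ kills the terms with $m\ne m'$ or $\kb\ne\kb'$. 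When $m=m'$, the remaining $\yb$-integral carries the weight $(1-|\yb|)^{2m+d_1+2\mu-1}\prod y_i^{\k_i}$, and orthogonality of the $\bm T$'s finishes the argument. A dimension count then shows spanning: $\sum_m \binom{m+d_1-1}{m}\binom{n-m+d_2-1}{n-m} = \binom{n+d_1+d_2-1}{n}$, and each $\bm Q$ is a polynomial of degree $n$. It is a polynomial because $(1-|\yb|)^m \bm B_{\kb,m}^\mu(\xb/(1-|\yb|))$ is.

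For the norm, I would use the same factorization. It gives
$$
\bm h^{n;\mu,\bk}_{\jb,\kb,m} = \bm b^\BB_\mu\,\bm b^\triangle_{\bg}\int_{\BB^{d_1}}|\bm B^\mu_{\kb,m}|^2\bm W^\BB_\mu\,\d\ub\int_{\triangle^{d_2}}|\bm T^{(\bk,\a+2m)}_{\jb,n-m}|^2\,\bm W^\triangle_{(\bk,\a+2m)}\,\d\yb ,
$$
with $\bg = (\bk,\a)$. This equals
$$
\frac{\bm b^\triangle_{(\bk,\a)}}{\bm b^\triangle_{(\bk,\a+2m)}}\,\bm h^{\mu,\BB}_{\kb,m}\,\bm h^{(\bk,\a+2m),\triangle}_{\jb,n-m}.
$$
The last step is to compute the ratio of simplex constants. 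Using the Dirichlet integral $(\bm b^\triangle_{\bg})^{-1} = \prod_i\Gamma(\g_i+1)/\Gamma(|\bg|+d_2+1)$, the ratio is
$$
\frac{\Gamma(\a+2m+1)\,\Gamma(|\bk|+\a+d_2+1)}{\Gamma(\a+1)\,\Gamma(|\bk|+\a+2m+d_2+1)} = \frac{(\a+1)_{2m}}{(|\bk|+\a+d_2+1)_{2m}} .
$$
Since $\a+1 = 2\mu+d_1$, this is exactly the prefactor in \eqref{eq:Q_OPNorm}. Note that \eqref{eq:bT} as printed has a typo, with $\prod(\k_i+1)$ in place of Gamma functions, so I would use the Dirichlet form directly.

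The only delicate point is bookkeeping. One must get the power of $(1-|\yb|)$ right, namely $2m+d_1+2\mu-1$, which combines $2m$ from the squared prefactor, $d_1$ from the Jacobian, and $2\mu-1$ from rewriting the ball weight. One must also check the normalization constant \eqref{eq:bOmega}, which is $\bm b^\BB_\mu \bm b^\triangle_{\bg}$ by the same computation with $m=0$. The index in \eqref{eq:Q_OPNorm} should read $n-m$ for $\jb$.
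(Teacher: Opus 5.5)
Your proposal is correct and takes essentially the same route as the paper. The paper does not reprove the statement but cites it from earlier work, where it is obtained exactly as you do: by specializing the wrapped-product result (Proposition~\ref{prop:wrapped_orth}) to $\rho(\yb)=1-|\yb|$ and $\Wb_2(\yb)=\prod_i y_i^{\k_i}(1-|\yb|)^{2\mu-1}$, with the norm equal to $\bm b^\triangle_{(\bk,\a)}/\bm b^\triangle_{(\bk,\a+2m)}$ times the two component norms (the form the paper records in \eqref{eq:Q_V0Norm}). Your side remarks are also right: \eqref{eq:bT} should have $\prod_{i}\Gamma(\k_i+1)$ in the denominator, and the simplex norm in \eqref{eq:Q_OPNorm} should carry the index $n-m$.
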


This is Proposition 4.1 of \cite{X26} and the case $d_2 = 1$ was studied in \cite{X20}. The case $d_1 =1$ is 
particularly important for our study, and we state it as a corollary with $d_2 = d$, for which the polynomial 
$\bm B_{\bm j, m}^\mu$ degenerates to the Gegenbauer polynomial $C_m^\mu$ in one variable, 
and we denote the squared norm of $C_m^\mu$ by $h_m^\mu$.  

\begin{cor}
For $0 \le m \le n$, $\bm j \in \NN_0^{d}$, $x \in [-1,1]$ and $\bm y \in \RR^d$, define 
\begin{equation} \label{eq:Qbasis_d1=1}
 \bm Q_{\bm j, m}^n(x,\bm y) = \bm T_{\bm j, n-m}^{(\bk, 2\mu+2m)}(\bm y) (1-|\bm y|)^m 
      C_m^\mu \left(\frac{x}{1-|\bm y| }\right). 
\end{equation}
Then the collection $\{\bm Q_{\bm j, m}^n:   |\bm j| = n-m, \, 0 \le m  \le n,   \, \bm j \in \NN_0^{d}\}$ 
is an orthogonal basis of $\CV_n\big(\BB^1 \rtimes \triangle^{d}, \bm W_{\mu,\bk}^{\BB\rtimes \triangle}\big)$. 
Moreover, the squared norm of $\Qb_{\bm j, m}^n$ is given by %, with $\a= 2\mu+d_1-1$, 
\begin{equation} \label{eq:Q_OPNorm_d1=1}
   \bm h_{\bm j, m}^{n; \mu,\bk}  = \la \bm Q_{\bm j, m}^n, \bm Q_{\bm j, m}^n \ra_{\mu, \bk}^{\BB\rtimes \triangle}  
       %=   \frac{\bm b^\triangle_{\bk+2\mu \bm e}}{\bm b^\triangle_{(\bk, 2\mu+ 2m)}}
       = \frac{(2\mu+1)_{2m}} {(|\bk|+ 2\mu + d+1)_{2m}} h_m^\mu \bm h_{\bm j, m}^{(\bk, 2m+2\mu), \triangle}.
\end{equation}
\end{cor}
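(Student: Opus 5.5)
The corollary is the special case $d_1=1$, $d_2=d$ of the preceding proposition, so the plan is to specialize it and then check that every ingredient degenerates as claimed. With $d_1=1$ we get $\alpha = 2\mu+d_1-1 = 2\mu$, so the simplex parameter $(\bk,\alpha+2m)$ becomes $(\bk,2\mu+2m)$. This matches \eqref{eq:Qbasis_d1=1}.

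The unit ball $\BB^1$ is $[-1,1]$, and the weight $\bm W_\mu^\BB$ is $(1-x^2)^{\mu-\f12}$. The space $\CV_m(\BB^1,\bm W_\mu^\BB)$ is therefore one-dimensional. By \eqref{eq:OP_BB} with $d=1$, its single basis element $\bm B^\mu_{k,m}$ is the Gegenbauer polynomial $C_m^\mu(x)$. Hence $\bm h^{\mu,\BB}_{k,m}=h_m^\mu$, the squared norm of $C_m^\mu$ with respect to the normalized weight $\bm b_\mu^\BB(1-x^2)^{\mu-\f12}$. Substituting into \eqref{eq:Qbasis} gives exactly \eqref{eq:Qbasis_d1=1}, and the orthogonal-basis assertion follows from the proposition.

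For the norm, set $d_1=1$ and $d_2=d$ in \eqref{eq:Q_OPNorm}. The prefactor becomes $(2\mu+1)_{2m}/(|\bk|+2\mu+d+1)_{2m}$, which is \eqref{eq:Q_OPNorm_d1=1}. The index appearing in $\bm h^{(\bk,\a+2m),\triangle}$ should be read as $n-m$, the degree of the simplex factor.

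The only real check, and the main obstacle, is that the constants in \eqref{eq:Q_OPNorm} behave consistently when $d_1=1$. To confirm this independently, I would compute directly. Substitute $x=(1-|\yb|)u$ and integrate over $u\in[-1,1]$ first. This uses $W(x,\yb)\,dx = (1-|\yb|)^{2\mu}(1-u^2)^{\mu-\f12}\prod y_i^{\k_i}\,du$. The $(\xb,\yb)$ integral of $Q^2$ then factors as
\[
h_m^\mu\,\frac{1}{\bm b_\mu^\BB}\int_{\triangle^d}\big(\bm T^{(\bk,2\mu+2m)}_{\jb,n-m}(\yb)\big)^2 \bm W^\triangle_{(\bk,2\mu+2m)}(\yb)\,d\yb .
\]
This equals $h_m^\mu\, \bm h^{(\bk,2\mu+2m),\triangle}_{\jb,n-m}/(\bm b^\BB_\mu\, \bm b^\triangle_{(\bk,2\mu+2m)})$. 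Multiplying by $\bm b^{\BB\rtimes\triangle}_{\mu,\bk}=\bm b_\mu^\BB\, \bm b^\triangle_{(\bk,2\mu)}$ from \eqref{eq:bOmega} leaves the ratio $\bm b^\triangle_{(\bk,2\mu)}/\bm b^\triangle_{(\bk,2\mu+2m)}$.

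By \eqref{eq:bT}, the normalization constant $\bm b^\triangle_{\bg}$ is $\Gamma(|\bg|+d+1)/\prod_i\Gamma(\g_i+1)$. The $\Gamma$ factor is printed without $\Gamma$ in the paper, but the normalization requires it. Using this form, the ratio is
\[
\frac{\Gamma(|\bk|+2\mu+d+1)\,\Gamma(2\mu+2m+1)}{\Gamma(|\bk|+2\mu+2m+d+1)\,\Gamma(2\mu+1)}=\frac{(2\mu+1)_{2m}}{(|\bk|+2\mu+d+1)_{2m}}.
\]
This confirms \eqref{eq:Q_OPNorm_d1=1}.

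Orthogonality follows from the same substitution. The $u$-integral kills terms with $m\neq m'$. When $m=m'$, the remaining simplex integral with weight $\bm W^\triangle_{(\bk,2\mu+2m)}$ kills terms with $\jb\neq\jb'$.

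Completeness is a dimension count. The number of pairs $(m,\jb)$ is $\sum_{m=0}^n\binom{n-m+d-1}{n-m}=\binom{n+d}{n}$, which is the dimension for $d+1$ variables. Each $\bm Q^n_{\jb,m}$ is a polynomial of degree $n$ because $C_m^\mu$ has parity $m$: the factor $(1-|\yb|)^m C_m^\mu(x/(1-|\yb|))$ is a homogeneous polynomial in $(x,1-|\yb|)$. It is also orthogonal to lower-degree polynomials, as the proposition asserts.
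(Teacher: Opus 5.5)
Your proposal is correct and follows the paper's route: the corollary is the case $d_1=1$, $d_2=d$ of the preceding proposition, with $\alpha=2\mu$ and $\bm B^\mu_{k,m}$ reducing to $C_m^\mu$, so that $\bm h^{\mu,\BB}_{k,m}=h_m^\mu$. Your independent check of the constant via $x=(1-|\yb|)u$ and the ratio $\bm b^\triangle_{(\bk,2\mu)}/\bm b^\triangle_{(\bk,2\mu+2m)}$ is sound, including restoring the missing $\Gamma$'s in the simplex normalization and reading the simplex index as $n-m$. One small point: the parity of $C_m^\mu$ is not needed for polynomiality here, because $\rho(\yb)=1-|\yb|$ is linear.
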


The domain $\BB^{1} \rtimes \triangle^{d} = [-1,1] \rtimes \triangle^{d}$ is a distorted simplex, not the 
standard one. In the case of $d =1$, $\BB^{1} \rtimes \triangle^1 = [-1,1]\rtimes [0,1]$ is the triangle 
$\{(x,y): |x| \le y \le 1\}$ of $\RR^2$, which can be mapped to the standard triangle 
$\triangle^2 = \{(x_1,x_2): x_1, x_2 \ge 0, x_1+x_2 \le 1\}$ by an affine transformation. The same holds for $d > 1$.
The space $\CV_n\big(\BB^{1} \rtimes \triangle^d, \bm W_{\mu,\bk}^{\BB\rtimes \triangle}\big)$ can be affine 
transformed to the space 
$\CV_n(\triangle^{d+1}, \bm W^\triangle_{\bm \b})$ on the simplex $\triangle^{d+1}$ with
\begin{equation}\label{eq:beta}
  \bm \beta = \left(\k_1, \ldots, \k_{d-1}, \mu - \tfrac12, \mu-\tfrac12, \k_d\right) \in \RR^{d+2}. 
\end{equation}
This provides a second explicit orthogonal basis for $\CV_n\big(\BB^1 \rtimes \triangle^{d}, 
\bm W_{\mu,\bk}^{\BB\rtimes \triangle}\big)$, which plays an important role in deducing an addition formula
for our new orthogonal polynomials with $d_1 > 1$ in the next section. 

\begin{thm} \label{thm:QOPd1=1}
The linear transformation $(x, \bm y) \in [-1,1] \rtimes \triangle^{d} \mapsto \bm u \in \triangle^{d+1}$ defined by
$$
  \bm u = \phi(x, \bm y) = \left(y_1, \ldots, y_{d-1}, \, \tfrac12(1-|\bm y|-x),  \tfrac12(1-|\bm y|+x) \right)
$$
is a bijection between $[-1,1]\rtimes \triangle^{d_2}$ and  $\triangle^{d+1}$. Moreover, under this linear transformation, 
the weight function $\bm W_{\mu,\bk}^{\BB\rtimes \triangle}(x, \bm y)$ with $\bk = (\k_1,\ldots, \k_d)$ becomes
$$
    \bm W_{\bm \b}^\triangle (\ub) = 2^{2\mu-1} u_1^{\k_1} \cdots u_{d-1}^{\k_{d-1}} 
      (1-|\bm u|)^{\k_d} u_d^{\mu -\f12} u_{d+1}^{\mu -\f12}.
$$
In particular, let $\{\bm T_{\bm j, n}^{\bm \b}: |\bm j|  = n, \, \bm j \in \NN_0^{d+1}\}$ be an orthogonal basis
of $\CV_n(\triangle^{d+1}, \bm W^\triangle_{\bm \b})$ on the simplex $\triangle^{d+1}$; then
$$
   \bm Q_{\bm j, n}^{\mu, \bk} (x,\bm y) = \bm T_{\bm j, n}^{\bm \b}\left(y_1, \ldots,  y_{d-1}, 
       \tfrac12(1-|\bm y|-x),  \tfrac12(1-|\bm y|+x) \right)
$$
for $|\bm j| = n$ consists of an orthogonal basis for 
$\CV_n\big([-1,1]\rtimes \triangle^d, \bm W_{\mu, \bk}^{\BB\rtimes \triangle}\big)$. 
\end{thm}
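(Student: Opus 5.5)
The plan is to verify the three assertions of Theorem~\ref{thm:QOPd1=1} directly from the explicit form of $\phi$, in order: bijectivity, then the weight, then the basis.

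\emph{Step 1: bijectivity.} The inverse of $\phi$ is explicit. Given $\bm u \in \RR^{d+1}$, set
\[
x = u_{d+1}-u_d, \qquad y_i = u_i \ (1\le i\le d-1), \qquad y_d = 1-|\bm u|.
\]
Indeed $u_d+u_{d+1} = 1-|\bm y|$, so $|\bm u| = y_1+\cdots+y_{d-1}+1-|\bm y| = 1-y_d$. Thus $\phi$ is an affine bijection of $\RR^{d+1}$. Next I check that the constraints correspond. The conditions $u_i\ge0$ for $i\le d-1$ are $y_i\ge 0$. The conditions $u_d,u_{d+1}\ge 0$ together say $|x|\le 1-|\bm y|$. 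Finally, $|\bm u|\le 1$ is $y_d\ge 0$. Together these conditions say exactly that $\bm y\in\triangle^d$ and $|x|\le 1-|\bm y|$, which defines $[-1,1]\rtimes\triangle^d$.

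\emph{Step 2: the weight.} Write $\bm W_{\mu,\bk}^{\BB\rtimes\triangle}(x,\bm y) = \prod_{i=1}^d y_i^{\k_i}\,[(1-|\bm y|)^2-x^2]^{\mu-\f12}$. Factor
\[
(1-|\bm y|)^2-x^2 = (1-|\bm y|-x)(1-|\bm y|+x) = 4u_du_{d+1}.
\]
Substituting $y_i=u_i$ for $i\le d-1$ and $y_d = 1-|\bm u|$ then gives
\[
\bm W_{\mu,\bk}^{\BB\rtimes\triangle}(x,\bm y) = 4^{\mu-\f12}\, u_1^{\k_1}\cdots u_{d-1}^{\k_{d-1}}\,(1-|\bm u|)^{\k_d}\,(u_du_{d+1})^{\mu-\f12}.
\]
This is $2^{2\mu-1}\bm W^\triangle_{\bm\b}(\bm u)$ with $\bm\b$ as in \eqref{eq:beta}, where the last coordinate $\k_d$ is the exponent of $1-|\bm u|$.

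\emph{Step 3: the basis.} The Jacobian of $\phi$ is a nonzero constant; computing it is not needed. Hence for polynomials $f,g$ on $\triangle^{d+1}$,
\[
\int_{[-1,1]\rtimes\triangle^d} (f\circ\phi)(g\circ\phi)\,\bm W^{\BB\rtimes\triangle}_{\mu,\bk}\,\d x\,\d\bm y = c\int_{\triangle^{d+1}} fg\,\bm W^\triangle_{\bm\b}\,\d\bm u
\]
for some constant $c>0$. Because $\phi$ is affine and invertible, composition with $\phi$ maps the polynomials of degree at most $n$ bijectively onto the polynomials of degree at most $n$. It therefore maps $\CV_n(\triangle^{d+1},\bm W^\triangle_{\bm\b})$, the orthogonal complement of degree $n-1$ inside degree $n$, onto $\CV_n([-1,1]\rtimes\triangle^d,\bm W^{\BB\rtimes\triangle}_{\mu,\bk})$. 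Orthogonality is preserved up to the factor $c$, and the dimensions both equal $\binom{n+d}{n}$. So $\{\bm T^{\bm\b}_{\bm j,n}\circ\phi\}$ is an orthogonal basis of the target space.

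The only delicate point is bookkeeping: matching the exponent $\k_d$ to the factor $1-|\bm u|$ rather than to a coordinate $u_i$, so that $\bm\b$ has the stated order. There is no real obstacle beyond this.
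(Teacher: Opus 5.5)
Your proposal is correct and follows essentially the same route as the paper. Both arguments use the explicit inverse $x=u_{d+1}-u_d$, $y_i=u_i$, $y_d=1-|\bm u|$, identify $|x|\le 1-|\bm y|$ with $u_d,u_{d+1}\ge 0$, factor $(1-|\bm y|)^2-x^2=4u_du_{d+1}$, and observe that an invertible affine change of variables preserves degree and orthogonality. The only differences are that you keep the constant $2^{2\mu-1}$ as an explicit factor in front of the standard weight \eqref{eq:W_simplex}, where the paper folds it into the notation, and that you spell out why $\CV_n$ is carried onto $\CV_n$; this is slightly more careful than the paper.
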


\begin{proof}
The inverse of the linear transform $\ub \mapsto (x, \bm y)$ is easily seen as given by 
$$
  x= u_{d+1}-u_d, \, y_1 = u_1, \ldots, y_{d-1} = u_{d-1}, \, y_d = 1-|\bm u|.
$$
It follows that $u_{d+1}-u_d = x$ and $u_d+u_{d+1} = 1 - |\bm y|$. In particular, $|x| \le 1-|\bm y|$ for 
$(x, \bm y) \in \BB^{1} \! \rtimes \triangle^{d_2}$ 
becomes $|u_d- u_{d+1}| \le u_d+u_{d+1}$, which is equivalent to $u_d \ge 0$ and $u_{d+1} \ge 0$. Moreover, $1-|\bm u| = y_d$, 
so that $(x,\bm y)$ is mapped to $\bm u \in \triangle^{d+1}$. Moreover, the Jacobian $\partial (x, \bm y)/\partial \bm u = 2$, which
shows that the linear transformation is a bijection. Furthermore, 
$$
  \bm W_{\mu,\bm \k}^{[-1,1]\rtimes \triangle}(x, \bm y) = \prod_{i=1}^d y_i^{\k_i} (1-|\bm y|-x)^{\mu-\f12} (1-|\bm y|+x)^{\mu-\f12} 
     = \bm W_{\bm \b}^\triangle (\ub)  
$$
with $\bm u \in \triangle^{d+1}$. Since the affine transformation preserves orthogonality, an orthogonal basis for 
$\CV_n\big(\BB^1 \rtimes \triangle^{d_2}, \bm W_{\mu,\bk}^{\BB\rtimes \triangle}\big)$ can be derived from 
that of $\CV_n(\triangle^{d+1}, \bm W^\triangle_{\bm \b})$.
\end{proof}

\subsection{Addition formula}
Let $\bm P_n^{d_1,d_2}(\bm W_{\mu, \bk}^{\BB \rtimes \triangle}; \cdot,\cdot)$ be the reproducing kernel of the space 
$\CV_n\big(\BB^{d_1} \!\rtimes \triangle^{d_2}, \bm W_{\mu,\bk}^{\BB\rtimes \triangle}\big)$. In terms of the orthogonal 
basis in \eqref{eq:Qbasis}, the kernel is given by
\begin{equation}\label{eq:reprodP}
\bm P_n^{d_1,d_2}\! \left(\bm W_{\mu, \bk}^{\BB\rtimes \triangle}; (\bm x,\bm y), (\bm u, \bm v)\right) = \sum_{m=0}^n 
  \sum_{|\bm j| = n-m} \sum_{|\bm k| = m} \frac{\bm Q_{\bm j, \bm k, m}^n(\bm x,\bm y)\bm Q_{\bm j, \bm k, m}^n(\bm u,\bm v)}{
   \la \bm Q_{\bm j, \bm k, m}^n,\bm Q_{\bm j, \bm k, m}^n\ra_{\mu, \bk}^{\BB\rtimes \triangle} }.
\end{equation}
Our goal is to derive a closed-form formula, called an addition formula, for this kernel. 

We first consider the case of $\BB^1 \rtimes \triangle^{d}$ equipped with $\bm W_{\mu, \bk}^{\BB\rtimes \triangle}$
with $\bk =(\k_1,\ldots, \k_d) \in \RR^{d}$. As seen in Theorem \ref{thm:QOPd1=1}, the orthogonal polynomials of 
$\CV_n\big(\BB^1 \rtimes \triangle^{d}, \bm W_{\mu,\bk}^{\BB\rtimes \triangle}\big)$ are closely related to the 
classical orthogonal polynomials on the simplex $\triangle^{d+1}$. The connection extends to the reproducing
kernels, which allows us to derive the addition formula on $[-1,1] \rtimes \triangle^{d}$ from \eqref{eq:add-simplex} 
on $\triangle^{d+1}$. 

\begin{prop}  
Let $\bk =(\k_1,\ldots, \k_d)$, $\k_i \ge - 1/2$ and $\mu \ge 0$. Then, for $(x, \bm y)$ and $(u, \bm v)$ in 
$\BB^1 \rtimes \triangle^{d}$,
\begin{align} \label{eq:Pn_d=1}
 \bm P_n^{1,d}  \left(\bm W_{\mu, \bk}^{\BB\rtimes \triangle}; (x,\bm y), (u, \bm v)\right) 
 & = c_{\mu,\bk} \int_{[-1,1]^{d+2}} Z_{2n}^{|\bk| + 2\mu +d}  \big(\eta(x, \bm y, u, \bm v; \bm t) \big) \\
  &  \times \prod_{i=1}^{d} (1-t_i^2)^{\k_i-\f12}  (1-t_{d+1}^2)^{\mu - 1}(1-t_{d+2}^2)^{\mu - 1} \d \bm t, 
   \notag
\end{align}
where $c_{\mu,\bk} =  c_{\k_1}\cdots  c_{\k_{d-1}} (c_{\mu-\f12})^{2} $, and
\begin{align}\label{eq:eta()}
 \eta(x, \bm y, u, \bm v; \bm t)  = &  \sqrt{y_1 v_1} \, t_1+ \ldots + \sqrt{y_d v_d} \,t_{d} 
    + \f12 \sqrt{1-|\bm y|+x} \sqrt{1-|\bm v|+u}\,t_{d+1} \\
   & + \f12 \sqrt{1-|\bm y| - x} \sqrt{1-|\bm v|-u} \,t_{d+2}. \notag
\end{align}
Moreover,  the identity holds under the limit \eqref{eq:limitInt} if one or more $\k_i= -\f12$ and/or $\mu = 0$. 
\end{prop}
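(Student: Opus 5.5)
The plan is to transport the simplex addition formula \eqref{eq:add-simplex} on $\triangle^{d+1}$ to $[-1,1]\rtimes\triangle^d$ through the affine bijection $\phi$ of Theorem \ref{thm:QOPd1=1}.

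\textbf{Step 1: the kernels agree.} Since $\phi$ is affine with constant Jacobian, it maps polynomials of degree $n$ to polynomials of degree $n$. It also carries $\bm W_{\mu,\bk}^{\BB\rtimes\triangle}$ to a constant multiple of $\bm W^\triangle_{\bm\b}$. Hence $f\mapsto f\circ\phi^{-1}$ is a unitary map from $\CV_n\big(\BB^1\rtimes\triangle^d,\bm W_{\mu,\bk}^{\BB\rtimes\triangle}\big)$ onto $\CV_n(\triangle^{d+1},\bm W^\triangle_{\bm\b})$. This needs both inner products normalized so that $\la 1,1\ra=1$; the factor $2^{2\mu-1}$ and the Jacobian are then absorbed into the normalization constants. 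Applying this map to an orthonormal basis, as in \eqref{eq:reprod-kernel}, gives
$$
\bm P_n^{1,d}\left(\bm W_{\mu,\bk}^{\BB\rtimes\triangle};(x,\bm y),(u,\bm v)\right)=\bm P_n\left(\bm W^\triangle_{\bm\b};\phi(x,\bm y),\phi(u,\bm v)\right).
$$
The one point to verify is that $\bm b_{\mu,\bk}^{\BB\rtimes\triangle}$ equals $\bm b^\triangle_{\bm\b}\cdot 2^{2\mu-1}\cdot 2$ (or its reciprocal, depending on the direction of the Jacobian). This follows from the fact that both inner products give total mass one, so no explicit Gamma-function computation is needed.

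\textbf{Step 2: apply \eqref{eq:add-simplex}.} We use it with $d+1$ in place of $d$ and parameters $\bm\b\in\RR^{d+2}$. Then $|\bm\b|+(d+1)=|\bk|+2\mu-1+d+1=|\bk|+2\mu+d$, which is exactly the index of $Z_{2n}$ in \eqref{eq:Pn_d=1}. Next we compute $\xi_\triangle(\phi(x,\bm y),\phi(u,\bm v),\bm t)$ coordinate by coordinate:
\begin{itemize}
\item the first $d-1$ coordinates give $\sqrt{y_iv_i}\,t_i$;
\item coordinate $d$ of $\phi$ is $\tfrac12(1-|\bm y|-x)$, giving $\tfrac12\sqrt{1-|\bm y|-x}\sqrt{1-|\bm v|-u}$;
\item coordinate $d+1$ of $\phi$ is $\tfrac12(1-|\bm y|+x)$, giving the analogous term with $+x$, $+u$;
\item the final term uses $1-|\phi(x,\bm y)|=y_d$, giving $\sqrt{y_dv_d}$.
\end{itemize}
Relabeling the integration variables $t_i$ by a permutation, which preserves both the product weight and the cube $[-1,1]^{d+2}$, turns this into $\eta$ in \eqref{eq:eta()}. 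The weight exponents $\b_i-\tfrac12$ become $\k_i-\tfrac12$ for $i\le d$ and $\mu-1$ for the two middle variables.

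\textbf{Step 3: constants and limiting cases.} The constant $c_{\bm\b}$ in \eqref{eq:add-simplex} is the product of the one-variable normalizations, $\prod c_{\b_i}$. The entries $\mu-\tfrac12$ contribute $c_{\mu-\f12}^2$, and the remaining entries contribute the $c_{\k_i}$ factors. Here I would check the paper's stated constant against \eqref{eq:add-simplex} and reconcile the indexing; the stated product seems to omit $c_{\k_d}$, which may be a typo. When $\k_i=-\tfrac12$ or $\mu=0$, the corresponding $\b_i=-\tfrac12$, and the limit \eqref{eq:limitInt} is inherited directly from the simplex formula.

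The only real obstacle is the bookkeeping in Step 1. One must confirm that the reproducing kernel is invariant under the affine change of variables once normalizations are matched. Since both weights are normalized to unit mass, the map is an isometry and the kernels agree pointwise.
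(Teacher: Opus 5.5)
Your proposal is correct and is essentially the paper's proof: identify the kernel with $\bm P_n(\bm W^\triangle_{\bm\b};\phi(x,\bm y),\phi(u,\bm v))$ via Theorem~\ref{thm:QOPd1=1}, apply \eqref{eq:add-simplex} on $\triangle^{d+1}$, compute $\xi_\triangle$ using $1-|\phi(x,\bm y)|=y_d$, and permute $t_d,t_{d+1},t_{d+2}$ (your unit-mass argument for matching the normalizations is a detail the paper leaves implicit). Your suspicion about the constant is also justified: the simplex formula gives $c_{\bm\b}=c_{\k_1}\cdots c_{\k_d}(c_{\mu-\f12})^2$, and the case $n=0$, where both sides must equal $1$, shows that the factor $c_{\k_d}$ missing from the statement is a typo.
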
 

\begin{proof} 
According to Theorem \ref{thm:QOPd1=1} and with $\bm \b$ in \eqref{eq:beta}, we have the relation
$$
  \bm P_n^{1,d}\left(\bm W_{\mu, \bk}^{[-1,1] \times \triangle}; (\bm x,\bm y), (\bm u, \bm v)\right) 
  = \bm P_n \left(\bm W_{\beta}^\triangle; \phi(\bm x,\bm y), \phi(\bm u, \bm v)\right).
$$ 
Hence, the closed-form formula for $\bm P_n\left(\bm W_{\mu, \bk}\right)$ can be deduced from the addition
formula \eqref{eq:add-simplex} on the simplex $\triangle^{d+1}$. In particular, 
\begin{align*}
  \xi_\triangle\left(\phi(x,\bm y), \phi(u, \bm v), \bm t \right) & = \sum_{i=1}^{d-1} \sqrt{y_i v_i} \, t_i + 
    \tfrac12 \sqrt{ 1-|\bm y | - x} \sqrt{ 1-|\bm v | - u} \, t_d \\
 & +   \tfrac12 \sqrt{ 1-|\bm y | + x} \sqrt{ 1-|\bm v | + u} \, t_{d+1} +
     \sqrt{y_d v_d} \,t_{d+2},
\end{align*}
where we have used $1- |\phi(x, \bm y)| = y_d$. In the statement of the theorem, we have shuffled the order of
$t_d, t_{d+1}, t_{d+2}$ of the above expression in the definition of $\eta(x, \bm y, u, \bm v, \bm t)$.
\end{proof}

Our next proposition shows that the kernel $\bm P_n^{d_1,d_2}$ can be expressed as an integral of the kernel 
$\bm P_n^{1,d_2}$ with a different parameter.  

\begin{prop} \label{prop:Pd=IntPd=1}
For $\mu \ge 0$ and $\bk =(\k_1,\ldots, \k_d)$ with $\k_i \ge -\f12$, 
\begin{align} \label{eq:P=IntP}
  & \bm P_n^{d_1,d_2} \big(\bm W_{\mu, \bk}^{\BB\rtimes \triangle}; (\bm x,\bm y), (\bm u, \bm v)\big)  \\
   = &\, c_{\mu-\f12} \int_{-1}^1 
     \bm P_n^{1,d_2}\left(\bm W_{\mu+\frac{d_1 -1}{2}, \bk}^{[-1,1]\rtimes \triangle}; \big(\zeta (\bm x,\bm y,  \bm u,\bm v; s),\bm y\big), \big(1-|\bm v|, \bm v\big) \right) 
  (1-s^2)^{\mu-1} \d s, \notag 
\end{align} 
where  
$$
\zeta (\bm x,\bm y,  \bm u,\bm v; s) = \frac{\la \bm x, \bm u \ra + s \sqrt{(1-|\bm y|)^2 -\|\bm x\|^2}  \sqrt{(1-|\bm v|)^2 -\|\bm u\|^2}}{1-|\bm v|},
$$
and $|\zeta (\bm x,\bm y,  \bm u,\bm v; s)|\le 1$ for $(\bm x, \bm y), (\bm u, \bm v) \in \BB^{d_1} \!\rtimes \triangle^{d_2}$ 
and $s \in [-1,1]$. Moreover,  the identity holds in the limit \eqref{eq:limitInt} if $\mu = 0$. 
\end{prop}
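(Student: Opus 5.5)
We start from the orthogonal basis \eqref{eq:Qbasis} and the expansion \eqref{eq:reprodP}. For fixed $m$, write the kernel as
$$
\sum_{m=0}^n \frac{(|\bk|+2\mu+d_1+d_2)_{2m}}{(2\mu+d_1)_{2m}}\,(1-|\bm y|)^m(1-|\bm v|)^m\, \bm P_m\Big(\bm W_\mu^\BB; \tfrac{\bm x}{1-|\bm y|}, \tfrac{\bm u}{1-|\bm v|}\Big)\, \bm P_{n-m}\big(\bm W^\triangle_{(\bk,\a+2m)};\bm y,\bm v\big),
$$
with $\a=2\mu+d_1-1$. This uses the norm formula \eqref{eq:Q_OPNorm}: the $\bm k$-sum gives the ball kernel and the $\bm j$-sum gives the simplex kernel. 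The key point is that the simplex factor and the numerical factor depend on $\mu$ and $d_1$ only through $\a$. Hence the same expression with $d_1=1$ and $\mu$ replaced by $\l:=\mu+\frac{d_1-1}{2}$ gives $\bm P_n^{1,d_2}$ with the same simplex factors. In that case the ball kernel on $[-1,1]$ becomes $\bm P_m(\bm W_{\l}^{[-1,1]};a,b)=\frac{C_m^\l(a)C_m^\l(b)}{h_m^\l}$.

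Next we apply the ball addition formula \eqref{eq:add-ball} in $\BB^{d_1}$ to the factor $(1-|\bm y|)^m(1-|\bm v|)^m\bm P_m(\bm W^\BB_\mu;\cdot,\cdot)$. Its integrand is $Z_m^{\l}$ evaluated at
$$
\frac{\la \bm x,\bm u\ra + s\sqrt{(1-|\bm y|)^2-\|\bm x\|^2}\sqrt{(1-|\bm v|)^2-\|\bm u\|^2}}{(1-|\bm y|)(1-|\bm v|)} = \frac{\zeta(\bm x,\bm y,\bm u,\bm v;s)}{1-|\bm y|},
$$
integrated against $c_{\mu-\f12}(1-s^2)^{\mu-1}\d s$. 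Since $Z_m^\l(1)=\frac{m+\l}{\l}C_m^\l(1)$ and $h_m^\l$ satisfy $Z_m^\l(t)=\frac{C_m^\l(t)C_m^\l(1)}{h_m^\l}$, this is exactly the one-dimensional kernel $\frac{C_m^\l(a)C_m^\l(b)}{h_m^\l}$ with $a=\zeta/(1-|\bm y|)$ and $b=1=(1-|\bm v|)/(1-|\bm v|)$. The powers $(1-|\bm y|)^m(1-|\bm v|)^m$ match those in the basis \eqref{eq:Qbasis_d1=1} evaluated at $(\zeta,\bm y)$ and $(1-|\bm v|,\bm v)$. So for each $m$ the integrand equals the $m$-th term of $\bm P_n^{1,d_2}\big(\bm W_{\l,\bk};(\zeta,\bm y),(1-|\bm v|,\bm v)\big)$. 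Summing over $m$ and exchanging the finite sum with the $s$-integral gives \eqref{eq:P=IntP}.

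The main thing to check carefully is the bookkeeping of the norm constants. The factor $\frac{(2\mu+d_1)_{2m}}{(|\bk|+2\mu+d_1+d_2)_{2m}}$ for $(d_1,\mu)$ must equal the one for $(1,\l)$, and it does since $2\l+1=2\mu+d_1$. The simplex parameter $(\bk,\a+2m)$ must also agree, which holds because $2\l=\a$. In addition, the identity $Z_m^\l(t)=C_m^\l(t)C_m^\l(1)/h_m^\l$ has to hold with the normalized norm $h_m^\l$; this is standard because it is the $d_1=1$ case of \eqref{eq:add-ball}.

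Two side points remain. First, $|\zeta|\le 1$: by Cauchy--Schwarz in $\RR^{d_1+1}$ applied to $(\bm x,\sqrt{(1-|\bm y|)^2-\|\bm x\|^2})$ and $(\bm u,\sqrt{\cdots})$, we get $|\zeta|\le (1-|\bm y|)(1-|\bm v|)/(1-|\bm v|)=1-|\bm y|\le 1$. This actually shows $(\zeta,\bm y)\in\BB^1\rtimes\triangle^{d_2}$. Second, the case $\mu=0$ follows by the limit \eqref{eq:limitInt}, since both sides are polynomial in the parameters after integration and \eqref{eq:add-ball} holds in that limit.
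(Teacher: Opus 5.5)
Your proposal is correct and follows essentially the same route as the paper. You expand the kernel in the wrapped basis \eqref{eq:Qbasis} with the norms \eqref{eq:Q_OPNorm}, apply the ball addition formula \eqref{eq:add-ball} to the $\bm k$-sum, and identify each $m$-th term with the corresponding term of the $d_1=1$ kernel with $\mu$ replaced by $\mu+\frac{d_1-1}{2}$, using $C_m^\lambda(1)C_m^\lambda(t)/h_m^\lambda=Z_m^\lambda(t)$. Two of your clarifications go beyond the paper: grouping the $\bm j$-sum into the simplex kernel $\bm P_{n-m}(\bm W^\triangle_{(\bk,\alpha+2m)};\bm y,\bm v)$ makes visible that everything depends on $(\mu,d_1)$ only through $\alpha$, and your Cauchy--Schwarz argument proves the sharper bound $|\zeta|\le 1-|\bm y|$, which the paper only asserts as $|\zeta|\le 1$.
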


\begin{proof}
In terms of the orthogonal polynomials \eqref{eq:Qbasis} and their norms \eqref{eq:Q_OPNorm}, and the definition of 
the reproducing kernel on the unit ball, 
\begin{align*}%\label{eq:reprodPd=1}
\bm P_n^{d_1,d_2} \left(\bm W_{\mu, \bk}^{\BB\rtimes \triangle}; (\bm x,\bm y), (\bm u, \bm v)\right) & = \sum_{m=0}^n \frac{(|\bk|+ 2\mu+ d_1 + d_2)_{2m}}{(2\mu+d_1)_{2m}} \frac1{\bm h_{\bm k,m}^{\mu,\BB}  
       \bm h_{\bm j,m}^{(\bk, \a+2m),\triangle}} \\
 & \times  \sum_{\bm j = n-m} \bm T_{\bm j, n-m}^{(\bk, 2m +2\mu+d_1-1)}(\bm y) 
      \bm T_{\bm j, n-m}^{(\bk, 2m+2\mu+d_1-1)}(\bm v) \\
 & \times  (1-|\bm y|)^m (1-|\bm v|)^m \bm P_m\left(W_\mu^\BB; \frac{\bm x}{1-|\bm y|}, \frac{\bm u}{1-|\bm v|} \right). \notag 
\end{align*}
Hence, by the addition formula \eqref{eq:add-ball}, we obtain
\begin{align}\label{eq:P=IntP2}
& \bm P_n^{d_1,d_2}  \left(\bm W_{\mu, \bk}^{\BB\rtimes \triangle}; (\bm x,\bm y), (\bm u, \bm v)\right)
  = \sum_{m=0}^n \frac{(|\bk|+ 2\mu+ d_1 + d_2)_{2m}}{(2\mu+d_1)_{2m}} \frac{1}{ \bm h_{\bm j,m}^{(\bk, \a+2m),\triangle}} \\
 &\qquad \times  \sum_{\bm j = n-m} \bm T_{\bm j, n-m}^{(\bk, 2m +2\mu+d_1-1)}(\bm y) 
      \bm T_{\bm j, n-m}^{(\bk, 2m+2\mu+d_1-1)}(\bm v) \notag \\
 & \qquad \times  (1-|\bm y|)^m (1-|\bm v|)^m c_\mu \int_{-1}^1 Z_m^{\mu+\f{d_1-1}{2}}
     \left(\frac{\zeta (\bm x,\bm y, \bm u,\bm v; s)}{1-|\bm y|} \right)
  (1-s^2)^{\mu-1} \d s. \notag 
\end{align}
Now, using \eqref{eq:Qbasis_d1=1} and \eqref{eq:Q_OPNorm_d1=1}, we obtain for the case $d_1=1$ and $\a = 2\mu + d_1-1=2\mu$,
\begin{align}\label{eq:reprodPd=1}
& \bm P_n^{1,d_2}\big(\bm W_{\mu, \bk}^{\BB\rtimes \triangle}; (x,\bm y), (1-|\bm v|, \bm v)\big) 
= \sum_{m=0}^n \frac{(|\bk|+ 2\mu+ d_2 +1)_{2m}}{(2\mu+1)_{2m}}  \frac{1}{ \bm h_{\bm j,m}^{(\bk, 2\mu+2m),\triangle}} \\
& \times  \sum_{\bm j = n-m} \bm T_{\bm j, n-m}^{(\bk, 2\mu+2m)}(\bm y)  \bm T_{\bm j, n-m}^{(\bk, 2\mu+2m)}(\bm v) 
  (1-|\bm y|)^m (1-|\bm v|)^m Z_m^\mu \left(\frac{x}{1-|\bm y|}\right), \notag 
\end{align}
where we have used the relation 
$$
    \frac{1}{\sqrt{h_m^\mu}} C_m^\mu(1) C_m^\mu (t) = Z_m^\mu(t). 
$$
Setting $\mu \to \mu + \frac{d_1-1}{2}$ and $x = \zeta (\bm x,\bm y, \bm u,\bm v; s)$ in \eqref{eq:reprodPd=1}, 
and comparing its right-hand side with the right-hand side of \eqref{eq:P=IntP2} shows that \eqref{eq:P=IntP} holds.
\end{proof}

The addition formula on $\BB^{d_1} \rtimes \triangle^{d_2}$ follows from combining two previous propositions. 

\begin{thm}\label{thm:addition_BT}
Let $\mu \ge 0$ and $\k_i \ge -\f12$, $1\le i \le d_2$. For $(\bm x, \bm y)$ and $(\bm u, \bm v) \in \BB^{d_1} \! \rtimes \triangle^{d_2}$,
\begin{align}\label{eq:addition}
  \bm P_n^{d_1,d_2}  \big(\bm W_{\mu, \bk}^{\BB\rtimes \triangle},  (\bm x, \bm y), (\bm u, \bm v) \big) 
    = & \, c_{\mu,\bk} \int_{[-1,1]^{d_2+2}}   Z_{2n}^{|\bk| + 2\mu + d_1+d_2-1}
    \big( \xi(\bm x, \bm y, \bm u, \bm v; \bm s, \bm t)\big) \notag \\
   &\times \prod_{i=1}^{d_2} (1-t_i^2)^{\k_i-\f12}  \prod_{i=1}^2 (1-s_i^2)^{\mu - 1}  \d \bm t\, \d \bm s,
\end{align}
where $c_{\mu, \bk} =  c_{\k_1}\cdots c_{\k_{d_2}} (c_{\mu-\f12})^{2} $ and
\begin{align*}
& \xi(\bm x, \bm y, \bm u, \bm v; \bm s, \bm t) =  \sqrt{y_1 v_1} \, t_1 + \cdots + \sqrt{y_{d_2} v_{d_2}} \, t_{d_2}  \\ 
   & \quad + \sqrt{\frac12 \left( \la \bm x, \bm u \ra+(1-|\bm y|)(1-|\bm v|) +  s_1 \sqrt{(1-|\bm y|)^2-\|\bm x\|^2}\sqrt{(1-|\bm v|)^2-\|\bm u\|^2} \right)} \, s_2.
\end{align*}
Moreover, \eqref{eq:addition} holds under the limit \eqref{eq:limitInt} if one or more $\k_i= -\f12$ and/or $\mu = 0$. 
\end{thm}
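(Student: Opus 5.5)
The plan is to combine the two preceding propositions. First, apply Proposition~\ref{prop:Pd=IntPd=1}, which writes $\bm P_n^{d_1,d_2}$ as a $c_{\mu-\f12}$-weighted integral in $s=s_1$ over $[-1,1]$ of the kernel $\bm P_n^{1,d_2}$ with parameter $\mu' = \mu+\frac{d_1-1}{2}$. That inner kernel is evaluated at the special pair of points $(\zeta,\bm y)$ and $(1-|\bm v|,\bm v)$. Then substitute for this inner kernel the closed form \eqref{eq:Pn_d=1} with $d=d_2$ and $\mu$ replaced by $\mu'$. The index of the Gegenbauer polynomial becomes $|\bk|+2\mu'+d_2 = |\bk|+2\mu+d_1+d_2-1$, which is exactly the index $Z_{2n}^{|\bk|+2\mu+d_1+d_2-1}$ appearing in \eqref{eq:addition}.

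The key simplification is the evaluation of $\eta$ from \eqref{eq:eta()} at $x=\zeta(\bm x,\bm y,\bm u,\bm v;s_1)$ and $u=1-|\bm v|$.
\begin{itemize}
\item Since $1-|\bm v|-u=0$, the $t_{d+2}$ term vanishes identically. The integral in $t_{d+2}$ then reduces to its normalized weight, which integrates to $1$ against its constant (also in the limiting case \eqref{eq:limitInt}).
\item In the $t_{d+1}$ term, $\sqrt{1-|\bm v|+u}=\sqrt{2(1-|\bm v|)}$. Using $(1-|\bm v|)\zeta = \la \bm x,\bm u\ra + s_1\sqrt{(1-|\bm y|)^2-\|\bm x\|^2}\sqrt{(1-|\bm v|)^2-\|\bm u\|^2}$, one gets
\[
\tfrac12\sqrt{1-|\bm y|+\zeta}\,\sqrt{2(1-|\bm v|)} = \sqrt{\tfrac12\big(\la \bm x,\bm u\ra+(1-|\bm y|)(1-|\bm v|)+s_1\sqrt{(1-|\bm y|)^2-\|\bm x\|^2}\sqrt{(1-|\bm v|)^2-\|\bm u\|^2}\big)}.
\]
Renaming $t_{d+1}$ as $s_2$ then reproduces $\xi(\bm x,\bm y,\bm u,\bm v;\bm s,\bm t)$.
\end{itemize}
Fubini then merges the $s_1$-integral with the $(t_1,\dots,t_{d_2},s_2)$-integral into a single integral over $[-1,1]^{d_2+2}$. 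The admissibility $|\zeta|\le 1$ guaranteed in Proposition~\ref{prop:Pd=IntPd=1} ensures all square roots are real.

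What remains is to collect the weights and constants: $(1-t_i^2)^{\k_i-\f12}$ with $c_{\k_i}$, the weight $(1-s_1^2)^{\mu-1}$ with $c_{\mu-\f12}$ from Proposition~\ref{prop:Pd=IntPd=1}, and the weight for $s_2$ inherited from \eqref{eq:Pn_d=1}. I expect this bookkeeping to be the main obstacle. The weight and constant attached to $s_2$ come with the shifted parameter $\mu'$, so they must be matched against the stated $(1-s_2^2)^{\mu-1}$ and $(c_{\mu-\f12})^2$.
\begin{itemize}
\item For $d_1=1$ one has $\mu'=\mu$, and the match with the statement is exact.
\item For $d_1>1$, I would first try to check the literal form directly by testing the case $n=0$: both sides must equal $1$. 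This requires each normalized one-dimensional weight to integrate to $1$.
\end{itemize}
I would also carefully trace the constant list in \eqref{eq:Pn_d=1}, which must include $c_{\k_{d}}$ for the $t_d$-variable, so that it agrees with $c_{\k_1}\cdots c_{\k_{d_2}}$. Finally, the limiting cases $\k_i=-\f12$ or $\mu=0$ follow by applying \eqref{eq:limitInt} in each variable, since both propositions hold under that limit.
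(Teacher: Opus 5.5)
Your route is exactly the paper's: Proposition~\ref{prop:Pd=IntPd=1}, then \eqref{eq:Pn_d=1} with $d=d_2$ and $\mu'=\mu+\frac{d_1-1}{2}$. The $t_{d+2}$ term is killed by $u=1-|\bm v|$, and the $t_{d+1}$ term is rewritten via $(1-|\bm v|)\zeta$. Your evaluation of $\eta$ and of the Gegenbauer index is correct, and so is your remark that the constant in \eqref{eq:Pn_d=1} must contain $c_{\k_d}$.

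The gap is the step you defer as bookkeeping: it cannot be completed. The $t_{d+2}$-integral collapses to $1$, absorbing one factor $c_{\mu'-\f12}$. But the variable $s_2=t_{d+1}$ still carries the weight $(1-s_2^2)^{\mu'-1}=(1-s_2^2)^{\mu+\frac{d_1-3}{2}}$ with constant $c_{\mu'-\f12}=c_{\mu+\frac{d_1-2}{2}}$. For $d_1\ge 2$ this is a genuinely different measure from the printed $(1-s_2^2)^{\mu-1}$ with $c_{\mu-\f12}$, and no rearrangement makes them agree. Your proposed $n=0$ test cannot detect this: $Z_0^\l=1$, and every normalized one-variable weight integrates to $1$ whatever its exponent.

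Test $n=1$ instead. Take $d_1=2$, $d_2=1$, $\mu=\f12$, $\k_1=0$, which is the uniform measure on the solid cone $\|\bm x\|\le 1-y$, and take $\bm x=\bm u=0$, $y=v=0$. Here $y$ has density $3(1-y)^2$, and the degree-one orthogonal polynomials are $x_1,x_2,y-\frac14$. So the left-hand side is $\frac{1/16}{\mathrm{Var}(y)}=\frac{1/16}{3/80}=\frac53$. On the right, $\xi=\sqrt{(1+s_1)/2}\,s_2$ and $Z_2^3(t)=40t^2-5$, so the right-hand side is $20\,E[s_2^2]-5$, with $E$ the mean under the normalized $s_2$-weight. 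The printed Chebyshev weight $(1-s_2^2)^{-1/2}$ gives $5$. The shifted weight $(1-s_2^2)^{\mu+\frac{d_1-3}{2}}=1$ gives $\frac53$.

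So the statement as printed holds only for $d_1=1$. The paper's own proof has the same slip: it announces $\mu\mapsto\mu+\frac{d_1-1}{2}$ but keeps $(1-s_2^2)^{\mu-1}$. What your argument actually proves, and what you should state, is
\begin{align*}
\bm P_n^{d_1,d_2}\big(\bm W_{\mu,\bk}^{\BB\rtimes\triangle};(\bm x,\bm y),(\bm u,\bm v)\big)
 = &\, c_{\k_1}\cdots c_{\k_{d_2}}\, c_{\mu-\f12}\, c_{\mu+\frac{d_1-2}{2}} \int_{[-1,1]^{d_2+2}} Z_{2n}^{|\bk|+2\mu+d_1+d_2-1}\big(\xi(\bm x,\bm y,\bm u,\bm v;\bm s,\bm t)\big)\\
 &\times \prod_{i=1}^{d_2}(1-t_i^2)^{\k_i-\f12}\,(1-s_1^2)^{\mu-1}\,(1-s_2^2)^{\mu+\frac{d_1-3}{2}}\,\d\bm t\,\d\bm s,
\end{align*}
with $\xi$ as in the theorem. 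When $\mu=0$ this agrees with Theorem~\ref{thm:additionV0} with $d_1$ replaced by $d_1+1$. For $\mu=0$ and $d_1\ge 2$, the limit \eqref{eq:limitInt} is then needed only in $s_1$.

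A smaller point: $|\zeta|\le 1$ does not make $\sqrt{1-|\bm y|+\zeta}$ real. What you need is $|\zeta|\le 1-|\bm y|$, i.e.\ $(\zeta,\bm y)\in\BB^1\rtimes\triangle^{d_2}$. This follows from Cauchy--Schwarz applied to $\big(\bm x,\sqrt{(1-|\bm y|)^2-\|\bm x\|^2}\big)$ and $\big(\bm u,\sqrt{(1-|\bm v|)^2-\|\bm u\|^2}\big)$.
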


\begin{proof}
The integral identity is deduced from \eqref{eq:P=IntP} and \eqref{eq:Pn_d=1} with $\mu$ replaced by $\mu + \frac{d-1}2$, and the 
integrand becomes 
\begin{align*}
&  \xi(\bm x, \bm y, \bm u, \bm v; \bm s, \bm t) = \eta \big(\zeta (\bm x,\bm y,  \bm u,\bm v; s_2),  
\bm y, 1-|\bm v|, \bm v;  \bm t \big)  \\ 
\qquad  &  = \sqrt{y_1 v_1} \, t_1 + \cdots + \sqrt{y_d v_d} \, t_d + \frac12 \sqrt{1-|\bm y| + 
     \zeta(\bm x, \bm y, \bm u, \bm v; s)} \sqrt{2 (1-|\bm v|)}\, t_{d+1}
\end{align*}
as the last term in \eqref{eq:eta()} becomes zero when $u = 1-|\bm v|$, which simplifies to the stated formula for
$\xi(\bm x, \bm y, \bm u, \bm v; \bm s, \bm t)$ after setting $s= s_1$ and $t_{d+1} = s_2$. 
\end{proof}

Complicated as it looks, the addition formula shows that the reproducing kernel has a one-dimensional characteristic, 
which can be used to significantly simplify the Fourier analysis on the domain $\BB^{d_1} \!\rtimes \triangle^{d_2}$. 

\subsection{Fourier orthogonal expansions}
This subsection serves as an introduction to applications of the addition formula in the study of Fourier-orthogonal
expansions. 

For $f \in L^2\big(\BB^{d_1} \!\rtimes \triangle^{d_2}, \bm W_{\mu,\bk}^{\BB\rtimes \triangle}\big)$, the 
Fourier orthogonal expansion of $f$ is defined by 
$$
   f = \sum_{n=0}^\infty \proj_n\left(\bm W_{\mu, \bk}^{\BB\rtimes \triangle}; f\right), 
$$
where $\proj_n(\bm W_{\mu, \bk}^{\BB\rtimes \triangle}):
 L^2\big(\BB^{d_1} \!\rtimes \triangle^{d_2}, \bm W_{\mu,\bk}^{\BB\rtimes \triangle}\big)
 \mapsto \CV_n\big(\BB^{d_1} \!\rtimes \triangle^{d_2}, \bm W_{\mu,\bk}^{\BB\rtimes \triangle}\big)$
denotes the orthogonal projection operator, which is an integral operator whose kernel is the reproducing kernel 
$\bm P_n\left( \bm W_{\mu,\bk}^{\BB\rtimes \triangle};  \cdot,\cdot\right)$, 
$$
 \proj_n\big(\bm W_{\mu, \bk}^{\BB\rtimes \triangle}; f, \bm x,\bm y\big)= \int_{\BB^{d_1} \!\rtimes \triangle^{d_2}}
  f (\bm u, \bm v) \bm P_n\big( \bm W_{\mu,\bk}^{\BB\rtimes \triangle};  (\bm x,\bm y), (\bm u, \bm v) \big)
   \bm W_{\mu, \bk}^{\BB\rtimes \triangle} (\bm u,\bm v) \d \bm u \d \bm v.
$$
For $n \in \NN_0$, we denote the $n$-th Fourier partial sum of the expansion by $S_n(\bm W_{\mu, \bk})$, 
$$
  S_n\big(\bm W_{\mu, \bk}^{\BB\rtimes \triangle}; f\big) 
   = \sum_{m=0}^n \proj_n\big(\bm W_{\mu, \bk}^{\BB\rtimes \triangle}; f\big).   
$$
By the standard Hilbert space theory, $S_n(\bm W_{\mu, \bk}^{\BB\rtimes \triangle}; f)$ converges to
$f$ in the norm of $L^2(\BB^{d_1}\rtimes \triangle^{d_2}, \bm W_{\mu, \bk}^{\BB\rtimes \triangle})$. 
To go beyond the $L^2$ theory, we introduce a pseudo-convolution structure defined via the addition formula \eqref{eq:addition},
which provides a tool that helps us to reduce a substantial portion of the analysis on the Fourier orthogonal expansion to the 
Fourier-Jacobi expansion in one variable. Let $w_{\a,\b}$ be the Jacobi weight function,
$$
   w_{\a,\b}(t) = (1-t)^\a (1+t)^\b, \qquad \a, \b > -1. 
$$

\begin{defn} 
For $\mu \ge 0$ and $\k_i \ge -\f12$, let $ \a = |\bk| + 2\mu + d_1+d_2-1$. For 
$f\in L^1\big(\BB^{d_1} \!\rtimes \triangle^{d_2}, \bm W_{\mu,\bk}^{\BB\rtimes \triangle}\big)$ 
and $g \in L^1([-1, 1], w_{\a, -\f12})$, define the pseudo convolution of $f$ and $g$ by
\begin{equation} \label{eq:convolu}
 f\ast g (\bm x,\bm y) = \int_{\BB^{d_1} \!\rtimes \triangle^{d_2}} f(\bm u,\bm v) T^{\mu,\bk} g\big ( (\bm x,\bm y), (\bm u,\bm v) \big)
    \bm W_{\mu, \bk}^{\BB\rtimes \triangle} (\bm u, \bm v) \d \bm u \d \bm v
\end{equation}
where the operator $g \mapsto T^{\mu,\bk} g$ is defined by 
\begin{align*}
T^{\mu,\bk} g \big ( (\bm x,\bm y), (\bm u,\bm v) \big)
 =  & \, \bm b_{\mu,\bk} \int_{[-1,1]^{d+1}} \int_{-1}^1 g \big( 2 \xi(\bm x, \bm y, \bm u, \bm v; \bm s, \bm t)^2-1 \big) \notag \\
   &\times \prod_{i=1}^{d_2-1} (1-t_i^2)^{\k_i-\f12} (1-|\bm t|^2)^{\k_{d_2}-\f12}  \prod_{i=1}^2 (1-s_i^2)^{\mu - 1}  \d \bm t\, \d \bm s,
\end{align*}
in which $\xi(\bm x, \bm y, \bm u, \bm v; \bm s, \bm t)$ is defined as in the addition formula \eqref{eq:addition}. 
\end{defn}

Using the quadratic transformation between the Jacobi polynomials and the Gegenbauer polynomials
\cite[(4.1.5) and (4.7.1)]{Sz}, 
$$
   Z_{n}^{(\a-\f12, -\f12)} (2t^2-1): = \frac{P_n^{(\a - \f12, -\f12)}(1) P_n^{(\a - \f12, -\f12)}(2t^2-1)}{h_n^{(\a-\f12, - \f12)}} =   Z_{2n}^\a (t),
$$
where $h_n^{(\a-\f12,-\f12)}$ is the squared norm of $P_n^{(\a - \f12, -\f12)}$, it follows readily that 
\begin{equation} \label{eq:kernel=*}
   \bm P_n \big(\bm W_{\mu,\bk}^{\BB\rtimes \triangle}; (\bm x, \bm y), (\bm u, \bm v) \big)
       = T^{\mu, \bk} \left(Z_n^{(\a-\f12,-\f12)} \right) \big ( (\bm x,\bm y), (\bm u,\bm v) \big).
\end{equation}
%with $ \a = |\bk| + 2 \mu + d_1+d_2 - \tfrac12$. 
As a consequence, it follows from the addition formula that  
\begin{equation} \label{eq:proj=*}
\proj_n\big(\bm W_{\mu, \bk}^{\BB\rtimes \triangle}; f, \bm x, \bm y\big) = f * Z_n^{(\a, -\f12)} (\bm x, \bm y). %, \qquad \a = |\bk| + 2 \mu + d_1+d_2 - \tfrac12.
\end{equation}

The operator $T^{(\mu, \bk)}$ has been defined and used in \cite{X05, X21b} whenever an addition formula exists. It is 
a bounded operator as shown in \cite[Lemma 6.3]{X21b}. 

\begin{lem} \label{lem:translateT}
Let $g \in L^1([-1,1],w_{\a, - \f12})$. Then, for each 
$Q_n \in \CV_n \big(\BB^{d_1} \!\rtimes \triangle^{d_2}, \bm W_{\mu,\bk}^{\BB\rtimes \triangle}\big)$, 
\begin{equation}\label{eq:FHformula}
     \int_{\BB^{d_1} \!\rtimes \triangle^{d_2}}  T^{(\mu,\bk)} g (\bm x, \bm y) Q_n (\bm y) \bm W_{\mu, \bk}^{\BB\rtimes \triangle}
       (\xb, \bm y) \d \bm x \d \bm y  = 
      \Lambda_n^{(\a,-\f12)} (g) Q_n(\bm x),
\end{equation}
where $\a = |\bk| + 2\mu + d_1+d_2-1$ and 
$$
 \Lambda_n^{(\a, -\f12)} (g) =  c_{\a, -\f12} \int_{-1}^1 g(t) \frac{P_n^{(\a, -\f12)}(t)}{P_n^{(\a,-\f12)}(1)} w_{\a,-\f12}(t)\d t.
 % \quad \hbox{with} \quad R_n^{(\a,\b)}(t) : = \frac{P_n^{(\a,\b)}(t)}{P_n^{(\a,\b)}(1)}
$$
Furthermore, for $1\le p \le \infty$ and $\bm x \in \BB^{d_1} \rtimes \triangle^{d_2}$, 
\begin{equation}\label{eq:Tbd}
  \left \| T^{(\mu,\bk)} g (\bm x, \cdot)\right\|_{L^p(\BB^{d_1} \! \rtimes \triangle^{d_2}, \bm W_{\mu, \bk}^{\BB\rtimes \triangle})} \le \|g \|_{L^p([-1,1],w_{\a,-\frac12})}.
\end{equation}
\end{lem}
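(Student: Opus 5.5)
The plan is to prove \eqref{eq:FHformula} first for polynomial $g$ and then extend by density, and to derive \eqref{eq:Tbd} from the case $g\equiv 1$ together with convexity.

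\textbf{Step 1: the Funk--Hecke identity for Jacobi polynomials.} First take $g = Z_m^{(\a-\f12,-\f12)}$, with the normalization convention of \eqref{eq:kernel=*}. By \eqref{eq:kernel=*}, $T^{\mu,\bk} g$ is exactly the reproducing kernel $\bm P_m\big(\bm W_{\mu,\bk}^{\BB\rtimes\triangle};(\bm x,\bm y),\cdot\big)$. Integrating it against $Q_n\in\CV_n$ therefore gives $\delta_{m,n}Q_n(\bm x,\bm y)$, by the reproducing property and the orthogonality of the spaces $\CV_m$.

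On the other side, $\Lambda_n(Z_m^{(\a-\f12,-\f12)})$ equals $\delta_{m,n}$ times the constant $P_n(1)\cdot P_n(1)/(h_n P_n(1))$, with the normalization $c_{\a,-\f12}$. This constant should be $1$, provided the Jacobi parameter in $\Lambda_n$ matches the one in \eqref{eq:kernel=*}. I will reconcile the slight index discrepancy in the text, $\a$ versus $\a-\f12$, by reading both consistently.

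Since the $Z_m$ span all polynomials in $t$, linearity then gives \eqref{eq:FHformula} for every polynomial $g$.

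\textbf{Step 2: positivity and the $L^1$/$L^\infty$ bounds.} The operator $T^{\mu,\bk}$ is positive: $g\ge 0$ implies $T^{\mu,\bk}g\ge 0$, since the integration weights are nonnegative. Taking $g\equiv 1$ in Step 1 with $n=0$ and $Q_0=1$ gives
$$\int T^{\mu,\bk}1(\bm x,\bm y;\bm u,\bm v)\,\bm W(\bm u,\bm v)\,\d\bm u\,\d\bm v = 1.$$
Moreover $T^{\mu,\bk}1$ is the constant $1$ by the normalization of the constants $c$. Together with positivity, this gives $\|T g(\bm x,\cdot)\|_\infty\le\|g\|_\infty$.

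For $p=1$ I will use a Fubini argument. For $|g|$, apply Step 1 with $n=0$ and $Q_0=1$ to a nonnegative polynomial approximating $|g|$, and pass to the limit by monotone convergence. This yields $\int T|g|(\bm x,\cdot)\,\bm W = \Lambda_0(|g|)=\|g\|_{L^1(w_{\a,-\f12})}$, with normalized measure. Hence $\|Tg(\bm x,\cdot)\|_1\le\|g\|_1$.

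\textbf{Step 3: interpolation and density.} Riesz--Thorin interpolation, or simply Jensen's inequality applied to the positive unital operator $|Tg|^p\le T(|g|^p)$, gives \eqref{eq:Tbd} for $1\le p\le\infty$. For general $g\in L^1(w_{\a,-\f12})$, approximate $g$ by polynomials in $L^1$. The left side of \eqref{eq:FHformula} converges by the $p=1$ bound, since $Q_n$ is bounded on the compact domain. The right side converges because $P_n(t)/P_n(1)$ is bounded.

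\textbf{Main obstacle.} The delicate step is justifying the Fubini and monotone-convergence claim that
$$\int T|g|(\bm x,\cdot)\,\bm W = c_{\a,-\f12}\int|g|\,w_{\a,-\f12}$$
for non-polynomial $g$. This amounts to identifying the pushforward of $\bm W(\bm u,\bm v)\,\d\bm u\,\d\bm v\times(\text{the } \bm s,\bm t \text{ measures})$ under the map $2\xi^2-1$ as the Jacobi measure $c\,w_{\a,-\f12}$. I will establish it by noting that the two measures agree on all polynomial moments, by Step 1, and that both are compactly supported on $[-1,1]$. By the Weierstrass theorem they therefore coincide, which resolves the issue.
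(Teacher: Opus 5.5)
Your proposal is correct and is essentially the argument the paper relies on when it cites [X21b, Lemma 6.3]: the Funk--Hecke identity for polynomial $g$ via \eqref{eq:kernel=*} and the reproducing property, the $p=\infty$ and $p=1$ bounds from positivity, $T^{\mu,\bk}1=1$ and the case $n=0$, then interpolation (or Jensen) and density. Your moment/pushforward identification is a clean way to justify $\int T^{\mu,\bk}|g|\,\bm W=\|g\|_{L^1}$ for non-polynomial $g$ and supersedes the loose monotone-convergence remark. Reading the Jacobi index as $\a-\f12$ throughout is the right repair of the paper's index inconsistency.
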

 
With the boundedness of $T^{(\mu,\bk)}$ in the lemma, one can deduce \cite{X21b} the Young's inequality for the 
pseudo-convolution defined in \eqref{eq:convolu}.  

\begin{thm} \label{thm:Young}
Let $p,q,r \ge 1$ and $p^{-1} = r^{-1}+q^{-1}-1$. For $f \in L^q(\BB^{d_1} \!\rtimes \triangle^{d_2}, 
 \bm W_{\mu, \bk}^{\BB\rtimes \triangle})$ and $g \in L^r([-1,1]; w_{\a,-\f12})$ with $\a = |\bk| + 2\mu + d_1+d_2-1$,
\begin{equation} \label{eq:Young}
  \|f \ast g\|_{L^p (\BB^{d_1}\rtimes \triangle^{d_2}, \bm W_{\mu, \bk}^{\BB\rtimes \triangle})} 
     \le \|f\|_{L^q(\BB^{d_1}\rtimes \triangle^{d_2}, \bm W_{\mu, \bk}^{\BB\rtimes \triangle})}\|g\|_{L^r( [-1,1]; w_{\a,-\f12})}.
\end{equation}
\end{thm}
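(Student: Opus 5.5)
The plan is the standard Young-inequality argument for a generalized translation operator, using only the bound \eqref{eq:Tbd} of Lemma \ref{lem:translateT} and the symmetry of the kernel. Write $\|\cdot\|_{p}$ for the norm in $L^p(\BB^{d_1}\rtimes\triangle^{d_2},\bm W_{\mu,\bk}^{\BB\rtimes\triangle})$ and $\|\cdot\|_{r,w}$ for the norm in $L^r([-1,1];w_{\a,-\f12})$. The kernel $K(\bm z,\bm w):=T^{\mu,\bk}g(\bm z,\bm w)$, with $\bm z=(\bm x,\bm y)$ and $\bm w=(\bm u,\bm v)$, is symmetric in $\bm z$ and $\bm w$. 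To see this, note that $\xi(\bm x,\bm y,\bm u,\bm v;\bm s,\bm t)$ is symmetric under $(\bm x,\bm y)\leftrightarrow(\bm u,\bm v)$: each $\sqrt{y_iv_i}$ is symmetric, and the radicand under the square root multiplying $s_2$ is symmetric. Hence \eqref{eq:Tbd} applied to $|g|^r$, or directly for exponent $r$, gives
$$
\sup_{\bm z}\int |K(\bm z,\bm w)|^r\,\bm W(\bm w)\,\d\bm w\le \|g\|_{r,w}^r,
$$
and by symmetry the same bound holds with the roles of $\bm z$ and $\bm w$ exchanged.

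Then I run the usual Hölder splitting. Since $\frac1p+\frac{p-q}{pq}+\frac{p-r}{pr}=1$ when $p^{-1}=q^{-1}+r^{-1}-1$, I write
$$
|K f(\bm w)|=\big(|K|^r|f|^q\big)^{1/p}\,|f|^{q(1/q-1/p)}\,|K|^{r(1/r-1/p)}
$$
and apply the three-factor Hölder inequality in $\bm w$ with exponents $p$, $pq/(p-q)$ and $pr/(p-r)$. This yields
$$
|f*g(\bm z)|\le\Big(\int|K(\bm z,\bm w)|^r|f(\bm w)|^q\,\bm W\,\d\bm w\Big)^{1/p}\,\|f\|_q^{1-q/p}\,\sup_{\bm z}\Big(\int|K|^r\,\bm W\,\d\bm w\Big)^{1/r-1/p}.
$$
Next I raise this to the power $p$, integrate in $\bm z$, and use Fubini together with the symmetric bound $\sup_{\bm w}\int|K(\bm z,\bm w)|^r\,\bm W(\bm z)\,\d\bm z\le\|g\|_{r,w}^r$. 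This gives $\|f*g\|_p^p\le \|f\|_q^{q}\,\|g\|_{r,w}^{r}\,\|f\|_q^{p-q}\,\|g\|_{r,w}^{p-r}$, which is \eqref{eq:Young}.

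The endpoint cases need separate handling. For $p=\infty$, i.e. $1/q+1/r=1$, a single Hölder inequality suffices. For $q=1$ or $r=1$, Minkowski's integral inequality combined with \eqref{eq:Tbd} gives the result directly.

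The main point requiring care is the symmetry step, i.e. the claim that \eqref{eq:Tbd} also controls the integral over the first variable. I would verify it from the explicit form of $\xi$ rather than from the reproducing-kernel identity, since $g$ is a general $L^r$ function and not a polynomial; the formula for $\xi$ makes the symmetry evident. I also need $|K|^r\le T^{\mu,\bk}(|g|^r)$. This follows from Jensen's inequality, because $T^{\mu,\bk}$ integrates against a positive measure of total mass one; that the mass is one follows from \eqref{eq:FHformula} applied with $g=1$ and $n=0$.
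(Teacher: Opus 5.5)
Your proposal is correct and is essentially what the paper does: the paper gives no details beyond saying that Young's inequality follows from the bound \eqref{eq:Tbd} by the standard argument, and that argument is your Hölder splitting combined with the symmetry of the kernel $T^{\mu,\bk}g$ in its two arguments. Two small remarks: the Jensen step is unnecessary, because \eqref{eq:Tbd} can be applied directly with exponent $r$; and the case $r=1$ (so $p=q$) is already covered by your three-factor Hölder argument, which there reduces to Schur's test, whereas Minkowski's integral inequality handles the case $q=1$, not $r=1$.
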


In particular, this shows that the operator $f \mapsto f\ast g$ is a bounded operator of 
$ L^q\big(\BB^{d_1} \!\rtimes \triangle^{d_2}, \bm W_{\mu,\bk}^{\BB\rtimes \triangle}\big)$
if $g$ is bounded in $L^1( [-1,1]; w_{\a,-\f12})$. As an application, let us consider the Ces\`aro $(C,\delta)$ means 
$S_n^\delta (\bm W_{\mu, \bk}^{\BB\rtimes \triangle};f)$ of the Fourier orthogonal series with respect to 
$\bm W_{\mu, \bk}^{\BB\rtimes \triangle}$ on $\BB^{d_1}\!\rtimes \triangle^{d_2}$. 
Let $\delta > 0$. Then the operator $S_n^\delta (\bm W_{\mu, \bk}^{\BB\rtimes \triangle})$ is defined by 
$$
 S_n^\delta \left(\bm W_{\mu, \bk}^{\BB\rtimes \triangle};f\right):= 
 \f{1}{\binom{n+\delta}{n}} \sum_{m=0}^n \binom{n-m+\delta}{n-m}\proj_n\big(\bm W_{\mu, \bk}^{\BB\rtimes \triangle}; f\big).
$$
Being a linear operator, this can be written as an integral operator with the kernel 
$\bm K_n^\delta(\bm W_{\mu, \bk}^{\BB\rtimes \triangle}; \cdot,\cdot)$,
and, by the definition of $Z_n^{(\a,-\f12)}$, and using \eqref{eq:kernel=*} and \eqref{eq:proj=*}, we see that 
$$
\bm K_n^\delta\big(\bm W_{\mu, \bk}^{\BB\rtimes \triangle}; \cdot,\cdot\big) 
       = T^{\mu, \bk} \left(k_n^{\delta, \a} \right)(\cdot,\cdot)  
 \quad \hbox{and} \quad S_n^\delta \left(\bm W_{\mu, \bk}^{\BB\rtimes \triangle};f\right) = f* k_n^{\delta,\a},
$$
where $k_n^{\delta,\a} (t) = k_n^\delta (w_{\a,-\f12}; t, 1)$, given in terms of the kernel for the $(C,\delta)$ means of the 
Fourier-Jacobi series with the weight function $w_{\a, -\f12)}$ on $[-1,1]$. The kernel $k_n^{\delta,\a}(t)$ is bounded if 
$\delta > \max\{\a, - \f12\} +\f12 = \a+\f12$ \cite[Theorem 9.1.3]{Sz} and, furthermore, the kernel is nonnegative 
if $\d \ge \a + \f32$ (cf. \cite{Ask}). Consequently, we obtain the following theorem. 
 
\begin{thm}\label{thm:cesaro}
Let $\mu \ge 0$ and $\k_i \ge -\f12$, $1 \le i\le d_2$, and $\a = |\bk| + 2\mu + d_1+d_2-1$. 
Then Ces\`aro $(C,\delta)$ means of the Fourier orthogonal expansion
for $\bm W_{\mu, \bk}^{\BB\rtimes \triangle}$ on $\BB^{d_1} \!\rtimes \triangle^{d_2}$ satisfy the following: 
\begin{enumerate}
\item If $\delta \ge \a+ \f32$, then $S_n^\delta\big(\bm W_{\mu,\bk}^{\BB\rtimes \triangle}; f\big)$ is a nonnegative operator;
\item If $\delta > \a +\f12$, then for $n= 0,1,2,\ldots$,
$$
    \left \|S_n^\delta\big( \bm W_{\mu,\bk}^{\BB\rtimes \triangle};  f\big)\right \|_{L^p(\BB^{d_1}\rtimes \triangle^{d_2},
      \bm W_{\mu,\bk}^{\BB\rtimes \triangle})} \le c\,  \|f\|_{L^p(\BB^{d_1}\rtimes \triangle^{d_2},\bm W_{\mu,\bk}^{\BB\rtimes \triangle})}, \quad 1 \le p \le \infty.
$$
\end{enumerate}
In particular, $S_n^\delta\big( \bm W_{\mu,\bk}^{\BB\rtimes \triangle};  f\big)$ converges to $f$ in 
$L^p\big(\BB^{d_1} \rtimes \triangle^{d_2}, \bm W_{\mu,\bk}^{\BB\rtimes \triangle}\big)$ if $\delta > \a + \f12$. 
\end{thm}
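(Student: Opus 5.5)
The plan is to express $S_n^\delta$ as the pseudo-convolution $f\ast k_n^{\delta,\a}$ and reduce both assertions to properties of the one-variable Jacobi Ces\`aro kernel $k_n^{\delta,\a}$ for the weight $w_{\a,-\f12}$. First, by linearity, \eqref{eq:kernel=*} applied to each $\proj_m$ in the Ces\`aro sum gives
$$
\bm K_n^\delta\big(\bm W_{\mu,\bk}^{\BB\rtimes\triangle};(\bm x,\bm y),(\bm u,\bm v)\big)=T^{\mu,\bk}\big(k_n^{\delta,\a}\big)\big((\bm x,\bm y),(\bm u,\bm v)\big),
$$
where $k_n^{\delta,\a}=\binom{n+\delta}{n}^{-1}\sum_{m=0}^n\binom{n-m+\delta}{n-m}Z_m^{(\a-\f12,-\f12)}$ is exactly the $(C,\delta)$ kernel $k_n^\delta(w_{\a,-\f12};t,1)$. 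Since $T^{\mu,\bk}$ is linear in $g$, the definition \eqref{eq:convolu} then gives $S_n^\delta(\bm W_{\mu,\bk}^{\BB\rtimes\triangle};f)=f\ast k_n^{\delta,\a}$. The only bookkeeping to check is that the Jacobi index matches: $Z_{2m}^{\a}(t)=Z_m^{(\a-\f12,-\f12)}(2t^2-1)$. The Jacobi weight is therefore the one attached to $(\a-\f12,-\f12)$, and the thresholds $\a+\f32$, $\a+\f12$ in the statement should be read with this convention.

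For part (1), $T^{\mu,\bk}g$ is an integral of $g(2\xi^2-1)$ against a nonnegative measure, since $\mu\ge0$ and $\k_i\ge-\f12$, with the limiting cases interpreted via \eqref{eq:limitInt}, which again gives a positive average. Hence $g\ge0$ implies $T^{\mu,\bk}g\ge0$. The nonnegativity of $k_n^{\delta,\a}$ for $\delta\ge\a+\f32$ (Askey's result, cited in the text) then gives a nonnegative kernel $\bm K_n^\delta$, and so a nonnegative operator, because $\bm W_{\mu,\bk}^{\BB\rtimes\triangle}\ge0$.

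For part (2), apply Young's inequality, Theorem \ref{thm:Young}, with $r=1$ and $q=p$:
$$
\|S_n^\delta f\|_p\le \|f\|_p\,\|k_n^{\delta,\a}\|_{L^1([-1,1];w_{\a,-\f12})}.
$$
The task then reduces to the uniform bound $\sup_n\|k_n^{\delta,\a}\|_{L^1(w_{\a,-\f12})}<\infty$ for $\delta>\a+\f12$. This is the classical Szeg\H{o} criterion \cite[Theorem 9.1.3]{Sz} for Jacobi $(C,\delta)$ means, which requires $\delta>\max\{\a,\b\}+\f12$ with $\b=-\f12$. This is the step I expect to need care. One has to make sure that the cited theorem gives $L^1$-boundedness of the kernel at the point $t=1$ (equivalently, uniform boundedness of the Ces\`aro means in $C[-1,1]$) with exactly this threshold for the index pair in use, rather than merely the pointwise boundedness stated loosely in the text. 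I will invoke it in the form "the $(C,\delta)$ means of Jacobi series are uniformly bounded on $C[-1,1]$ iff $\delta>\max\{\a,\b\}+\f12$." By duality, evaluating at $t=1$ with the product formula, this is equivalent to the $L^1$ bound on $k_n^{\delta,\a}$.

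Finally, convergence in $L^p$ follows by a density argument. Polynomials are dense in $L^p$ for $1\le p<\infty$, and in $C$ for $p=\infty$ on the compact domain. The $(C,\delta)$ means reproduce polynomials up to an error tending to zero, since for a fixed polynomial $P$ of degree $N$ the coefficients $\binom{n-m+\delta}{n-m}/\binom{n+\delta}{n}\to1$ for $m\le N$. Combined with the uniform operator bound from part (2), the Banach--Steinhaus argument gives $S_n^\delta f\to f$.
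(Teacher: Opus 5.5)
Your proposal is correct and follows essentially the paper's own argument. You write $S_n^\delta(\bm W_{\mu,\bk}^{\BB\rtimes\triangle};f)=f\ast k_n^{\delta,\a}$ via \eqref{eq:kernel=*}, get part (1) from the positivity of $T^{\mu,\bk}$ together with Askey's nonnegativity of the Jacobi $(C,\delta)$ kernel, and get part (2) from Theorem~\ref{thm:Young} with $r=1$ together with Szeg\H{o}'s uniform $L^1$ bound, with your density argument supplying the convergence claim the paper leaves implicit. Your index remark is also right: the paper itself passes from $Z_n^{(\a-\f12,-\f12)}$ in \eqref{eq:kernel=*} to the weight $w_{\a,-\f12}$ afterwards, and $w_{\a-\f12,-\f12}$ is the weight that belongs in Lemma~\ref{lem:translateT} and Theorem~\ref{thm:Young}. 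This is harmless here, since for $w_{\a-\f12,-\f12}$ Szeg\H{o}'s threshold becomes $\delta>\a$ and the same Askey bound gives $\delta\ge\a+1$, both below the stated ones, and boundedness and nonnegativity of $(C,\delta)$ means persist as $\delta$ increases.
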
 

We refer to \cite{X21} for several potential applications of the pseudo-convolution structure, such as weighted best 
approximation by polynomials and tight frames, which become approachable with the pseudo-convolution structure, 
but require lengthy and delicate estimates to establish highly localized kernels, and the latter can be quite technical 
and involved. 

\section{Spherical-Simplex polynomials on the surface $\SS^{d_1-1}\!\rtimes \triangle^{d_2}$}
\setcounter{equation}{0}

We consider orthogonal polynomials on the surface of $\BB^{d_1} \rtimes \triangle^{d_2}$ in this section. The 
domain is a quadratic surface, which is the wrapped product $\SS^{d_1-1}\!\rtimes \triangle^{d_2}$ of the 
unit sphere and the simplex. Orthogonal polynomials in this setting are reviewed in the first subsection,
and the addition formula is derived in the second subsection. All other known addition formulas on compact 
domains are known to be closely related to an intrinsic distance function of the domain. This is shown to be true 
in the third subsection, where a distance function is defined and justified for both $\SS^{d_1-1} \rtimes \triangle^{d_2}$ 
and $\BB^{d_1} \rtimes \triangle^{d_2}$. 

\subsection{Orthogonal polynomials on $\SS^{d_1-1}\!\rtimes \triangle^{d_2}$} 
Let $d_1$ and $d_2$ be two positive integers and $d_1 \ge 2$. We consider the surface of the domain $\BB^{d_1}\!\rtimes \triangle^{d_2}$, defined by 
$$
  \SS^{d_1-1}\!\rtimes \triangle^{d_2} = \left\{(\bm x,\bm y) \in \RR^{d_1} \times \RR_+^{d_2}: \|\bm x\| = 1- |\bm y| \le 1\right\}.
$$
For $\bm\kappa \in \RR^{d_2}$ such that $\k_i > -d_1$, $1 \le i \le d_2$ and $\g > -1$, we define  
$$
  {\sf W}_{\g,\bk}^{\SS\rtimes \triangle}(\bm y) =  \prod_{i=1}^{d_2} |y_i|^{\k_i} (1-|\bm y|)^{\g}, \qquad \yb \in \triangle^{d_2},
$$
and consider orthogonal polynomials with respect to the inner product defined by 
\begin{equation} \label{eq:ipdV0}
  \la f, g\ra_{\g, \bk}^{\SS\rtimes \triangle}  = \sb_{\g,\bk}^{\SS\rtimes \triangle} 
  \int_{ \SS^{d_1-1}\!\rtimes \triangle^{d_2}} f(\bm x,\bm y) g(\bm x,\bm y) \sW_{\g,\bk}^{\SS\rtimes \triangle}(\bm y) 
     \d \s(\bm x, \bm y), 
\end{equation}
where $\d \s(\bm x, \bm y)$ is the surface measure of $\SS^{d_1-1}\!\rtimes \triangle^{d_2}$ and
the constant $\sb_{\g,\bk}^{\SS\rtimes \triangle}$ satisfies
\begin{equation} \label{eq:b-ST0}
  \sb_{\g,\bk}^{\SS\rtimes \triangle} = \frac{1}{\o_{d_1}} \bm b^\triangle_{(\bk, \g+d_1 -1)},
\end{equation}
where $\o_d$ is the surface area of $\sph$. 
Let $\CV_n\big(\SS^{d_1-1}\!\rtimes \triangle^{d_2},  {\sf W}_{\g,\bk}^{\SS\rtimes \triangle}\big)$ be
the space of orthogonal polynomials of degree $n$ with respect to the inner product 
$\la \cdot, \cdot\ra_{\g, \bk}^{\SS\rtimes \triangle}$. 
%$$
%  \dim \CV_n\big(\SS^{d_1-1}\!\rtimes \triangle^{d_2},  {\sf W}_{\g,\bk}^{\SS\rtimes \triangle}\big)
%   = \binom{n + d_1+d_2 -1}{n} - \binom{n + d_1+d_2 -3}{n-2}.
%$$

\iffalse
\begin{rem}
We assume $d_1 > 1$ since if $d_1=1$, then the surface 
is defined by $|x| = 1-y_1-\cdots - y_d$, where we write $d = d_2$, which consists of two mirroring simplices in
$\RR^{d+1}$ joined at the line $x=0$ and $|\bm y|=1$. For $d =2$, it is two triangular planes in $\RR^3$, defined by
$\pm x = 1-y_1-y_2$, $(y_1, y_2) \in \triangle^2$, that are joined at a line and it has vertices at $(1,0,0), (-1,0,0), (0,1,0), (0,0,1)$ 
in $(x,y_1,y_2)$ coordinates. Our wrapped product construction of an orthogonal basis does not apply to this case. Moreover, for 
$d =1$, the domain becomes two line segments intersecting at one point, for which a family of orthogonal polynomials 
is studied in \cite{OX1}, and they are not eigenfunctions of a spectral operator. 
\end{rem}

Parametrising the domain $\VV^{d_1,d_2}_0$ by settign $\bm x = (1- | \bm y|) \xi$ with $\xi \in \sph$, it follows readily that 
\begin{equation} \label{eq:intV0}
  \int_{\VV_0^{d_1,d_2}} f(\bm x, \bm y) \d\s(\bm x, \bm y) =  \int_{\triangle^{d_2}} (1-|\bm y|)^{d_1} \int_{\SS^{d_1-1}}
     f\big( (1-|\bm y|) \xi, \bm y\big) \d \s(\xi)\, \d \bm y;
\end{equation}
moreover, under the same parametrization, 
$$   
  \sW_{\g,\bm \kappa}(\bm y) \d \s(\bm x, \bm y)
    = (1-|\bm y|)^{\g+ d_1-1} \sW_{(\bk,0)}^\triangle (\bm y) \d\s(\xi) \d \bm y
    =  \sW_{(\bk,\g+d_1-1)}^\triangle (\bm y) \d\s(\xi) \d \bm y.
$$
\fi
 
We again let $\{\bm T_{\bm j, m}^{\bg}: |\bm j| = m, \, \bm j\in \NN_0^{d_2}\}$ be an orthogonal basis for 
$\CV_{m}\left(\triangle^{d_2},\bm W^\triangle_{\bg}\right)$, where $\bg \in \RR^{d_2+1}$, such as the one given 
in \eqref{eq:OP_TT}. Let $\{ \sY_{\ell}^m: 1 \le \ell \le \dim \CH_m^{d_1} \}$ be an orthonormal basis of $\CH_m^{d_1}$. 
 
% and denote their normal squares by $\bm h_{\bm j,m}^\bg(\triangle)$ as in the case of the solid quadratic domain. 

\begin{prop}
For $\g > - d_1$, let $\a = \g + d_1 -1$. Define %For $m \le n$, let $\{\bm T_{\bm j, n-m}^{(\bk, 2m+ \a)}: |\bm j| = n-m, \, \bm j\in \NN_0^{d_2}\}$ be 
%an orthonormal basis for $\CV_{n-m}\left(\triangle^{d_2},\bm W^\triangle_{(\bk, \alpha+2m)}\right)$, and let $\{Y_\ell^m: 1 \le \ell \le 
%\dim \CH_m^{d_1} \}$ be an orthonormal basis of $\CH_m^{d_1}$. Let
%For $0 \le m \le n$, $\bm k \in \NN_0^{d_1}$ and $\bm j \in \NN_0^{d_2}$, and $(\bm x, \bm y) \in \VV^{d_1,d_2}$, define 
\begin{equation} \label{eq:QbasisV0}
 \sQ_{\bm j, \ell, m}^n(\bm x,\bm y) = \bm T_{\bm j, n-m}^{(\bk, \alpha+2m)}(\bm y) (1-|\bm y|)^m 
      \sY_{\ell}^m \left(\frac{\bm x}{1-|\bm y|}\right). 
\end{equation}
Then $\{\sQ_{\bm j, \ell, m}^n: |\bm j| = n-m, \, 1 \le \ell \le \dim \CH_m^{d_1}, \, 0 \le m  \le n, \bm j \in \NN_0^{d_2}\}$ 
is an orthogonal basis of $\CV_n(\SS^{d_1-1}\!\rtimes \triangle^{d_2}, \sW_{\g, \bk}^{\SS\rtimes \triangle})$. Moreover, the 
squared norm of $\sQ_{\bm j, \ell, m}^n$ 
is given by 
\begin{equation} \label{eq:Q_V0Norm}
   \sh_{m,n}^{\g,\bk}  % = \la \sQ_{\bm j, \ell, m}^n, \sQ_{\bm j,\bm k, m}^n \ra_{\mu, \bk} 
       =   \frac{\bm b^\triangle_{(\bk,\alpha)}}{\bm b^\triangle_{(\bk, \alpha + 2m)}} \bm h_{\bm j,m}^{(\bk, \a+ 2m), \triangle}
       = \frac{(\g +d_1)_{2m}} {(|\bk|+ \g + d_1 + d_2)_{2m}} \bm h_{\bm j,m}^{(\bk, \a+ 2m), \triangle}.  
\end{equation}
\end{prop}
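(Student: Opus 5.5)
The plan is to parametrize the surface by $\bm x = (1-|\bm y|)\xi$ with $\xi \in \SS^{d_1-1}$ and $\bm y \in \triangle^{d_2}$. In these coordinates the surface measure satisfies $\d\s(\bm x,\bm y) = c\,(1-|\bm y|)^{d_1-1}\d\s(\xi)\,\d\bm y$. The constant $c$ comes from the slope of the cone in the $\bm y$ directions and is absorbed into the normalization; only the power of $(1-|\bm y|)$ matters. The weight then becomes
$$
\sW_{\g,\bk}^{\SS\rtimes\triangle}(\bm y)\,\d\s(\bm x,\bm y) = c\, \bm W^\triangle_{(\bk,\a)}(\bm y)\,\d\s(\xi)\,\d\bm y, \qquad \a = \g+d_1-1.
$$
So the inner product \eqref{eq:ipdV0} factors as $\frac{1}{\o_{d_1}}\int_{\SS^{d_1-1}}\d\s(\xi)$ times $\bm b^\triangle_{(\bk,\a)}\int_{\triangle^{d_2}}\cdots \bm W^\triangle_{(\bk,\a)}(\bm y)\,\d\bm y$. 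I would fix the constant $c$ by checking $\la 1,1\ra = 1$, which gives \eqref{eq:b-ST0}.

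The next step is to compute $\la \sQ^n_{\bm j,\ell,m}, \sQ^{n'}_{\bm j',\ell',m'}\ra$ in these coordinates. Since $\bm x/(1-|\bm y|) = \xi$, the integrand is
$$
\bm T^{(\bk,\a+2m)}_{\bm j,n-m}\,\bm T^{(\bk,\a+2m')}_{\bm j',n'-m'}\,(1-|\bm y|)^{m+m'}\,\sY^m_\ell(\xi)\,\sY^{m'}_{\ell'}(\xi).
$$
Integrating over $\xi$ gives $\delta_{m,m'}\delta_{\ell,\ell'}$, because the $\sY$ are orthonormal. When $m=m'$, the remaining $\bm y$-integral is taken against $(1-|\bm y|)^{2m}\bm W^\triangle_{(\bk,\a)} = \bm W^\triangle_{(\bk,\a+2m)}$. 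This yields $\delta_{n,n'}\delta_{\bm j,\bm j'}$ times $\frac{\bm b^\triangle_{(\bk,\a)}}{\bm b^\triangle_{(\bk,\a+2m)}}\bm h^{(\bk,\a+2m),\triangle}_{\bm j,n-m}$, which is the first expression in \eqref{eq:Q_V0Norm}. The ratio of the constants in \eqref{eq:bT} simplifies to the Pochhammer quotient $(\g+d_1)_{2m}/(|\bk|+\g+d_1+d_2)_{2m}$. The $\Gamma(\a+1)$ versus $\Gamma(\a+2m+1)$ factors give $(\a+1)_{2m} = (\g+d_1)_{2m}$, and the $\Gamma(|\bk|+\a+d_2+1)$ factors give the denominator. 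Alternatively, orthogonality could be quoted directly from Proposition \ref{prop:wrapped_orth} with $\rho(\bm y) = 1-|\bm y|$ in Case 1, adapted to the surface.

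I also need to check that each $\sQ^n_{\bm j,\ell,m}$ is a polynomial of degree $n$. Since $\sY^m_\ell$ is homogeneous of degree $m$, we have $(1-|\bm y|)^m\sY^m_\ell(\bm x/(1-|\bm y|)) = \sY^m_\ell(\bm x)$, which is a polynomial. Multiplying by the degree-$(n-m)$ polynomial $\bm T$ gives total degree $n$.

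The main obstacle is completeness, that is, showing these polynomials span $\CV_n$ restricted to the surface. I would argue by dimension count. The number of functions is
$$
\sum_{m=0}^n \dim\CH^{d_1}_m\cdot\binom{n-m+d_2-1}{n-m},
$$
and I need to show this equals $\dim\Pi_n(\SS^{d_1-1}\rtimes\triangle^{d_2}) - \dim\Pi_{n-1}(\cdot)$. On the surface, $\|\bm x\|^2 = (1-|\bm y|)^2$, so polynomials restricted to the surface are exactly polynomials modulo the ideal generated by $\|\bm x\|^2-(1-|\bm y|)^2$. Using the decomposition of polynomials in $\bm x$ into $\|\bm x\|^{2i}$ times harmonics, every polynomial of degree at most $n$ reduces on the surface to $\sum_m \sY_m(\bm x)\,p_m(\bm y)$ with $\deg p_m \le n-m$. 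Hence the $\sQ$'s of degree at most $n$, which are orthogonal and therefore linearly independent, span that space. Mutual orthogonality across degrees then identifies the degree-$n$ ones as a basis of $\CV_n$.
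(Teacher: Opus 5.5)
Your proof is correct, and it is the argument behind this statement. The paper gives no proof here; it quotes \cite[Proposition 7.2]{X26}. It does, however, use the same substitution $\bm x=(1-|\bm y|)\xi$, under which $\sW_{\g,\bk}^{\SS\rtimes\triangle}\,\d\s$ becomes a constant times $\d\s(\xi)\,\bm W^\triangle_{(\bk,\a)}(\bm y)\,\d\bm y$. Your completeness step is really a direct spanning argument (Gauss decomposition plus $\|\bm x\|^{2i}=(1-|\bm y|)^{2i}$ on the surface), so the dimension count you announce is never needed.

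The only slip is cosmetic. The geometric constant is $c=\sqrt{d_2+1}$, so normalizing $\la 1,1\ra=1$ gives $\bm b^\triangle_{(\bk,\a)}/(\sqrt{d_2+1}\,\o_{d_1})$. That agrees with \eqref{eq:b-ST0} only if $\d\s$ is read as $(1-|\bm y|)^{d_1-1}\d\s(\xi)\,\d\bm y$. The norm formula \eqref{eq:Q_V0Norm} is unaffected, since it uses only that the inner product is normalized.
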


This is \cite[Proposition 7.2.]{X26}. The most important family turns out to be when $\g = -1$ since this is 
the only case for which the orthogonal polynomials possess a spectral operator \cite{X26}. It turns out that
the addition formula holds for $\g = -1$ and only in this case. 

Let $\sP_n(\sW_{-1,\bk}; \cdot, \cdot)^{\SS\rtimes \triangle}\big)$ be the reproducing kernel of the space 
$\CV_n\big(\SS^{d_1-1}\!\rtimes \triangle^{d_2},  {\sf W}_{-1,\bk}^{\SS\rtimes \triangle}\big)$. 

\begin{thm}\label{thm:additionV0}
Let $d_1 \ge 2$ and $\k_1,\ldots, k_{d_2} \ge -\f12$. Then 
\begin{align}\label{eq:additionV0}
  \sP_n  \big(\sW_{-1, \bk}^{\SS\rtimes \triangle},  (\bm x, \bm y), (\bm u, \bm v) \big) 
    = & \, \sc_{-1,\bk} \int_{[-1,1]^{d_2+1}} Z_{2n}^{|\bk| + d_1+d_2-2}
    \big( \xi_0(\bm x, \bm y, \bm u, \bm v; s, \bm t)\big)  \\
   & \times \prod_{i=1}^{d_2} (1-t_i^2)^{\k_i-\f12}  (1-s^2)^{\f{d_1-2}{2} - 1}  \d \bm t\, \d s,\notag
\end{align}
where $ \sc_{-1,\bk} = c_{\k_1} \cdots c_{\k_{d_2}} c_{\frac{d_1-3}{2}}$ and 
\begin{align*}
& \xi_0(\bm x, \bm y, \bm u, \bm v; s, \bm t) =  \sqrt{y_1 v_1} \, t_1 + \cdots + \sqrt{y_{d_2} v_{d_2}} \, t_{d_2} 
+ \sqrt{\tfrac12 \left[ \la \bm x, \bm u \ra+(1-|\bm y|)(1-|\bm v|)\right]} \, s.
\end{align*}
Moreoever, the identity \eqref{eq:additionV0} holds under the limit if one or more $\k_i = - \f12$ or $d_1= 2$. 
\end{thm}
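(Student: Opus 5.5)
Since \eqref{eq:Q_V0Norm} gives the norms of the wrapped basis \eqref{eq:QbasisV0}, I will mimic the proof of Proposition \ref{prop:Pd=IntPd=1}, replacing the ball kernel by the spherical harmonic addition formula \eqref{eq:additionF}. With $\g=-1$ we have $\a=d_1-2$. Substituting the basis into \eqref{eq:reprod-kernel} and summing over $\ell$ first gives
\begin{align*}
\sP_n\big(\sW_{-1,\bk}^{\SS\rtimes\triangle};(\bm x,\bm y),(\bm u,\bm v)\big)
 = \sum_{m=0}^n & \frac{(|\bk|+d_1+d_2-1)_{2m}}{(d_1-1)_{2m}} \sum_{|\bm j|=n-m}
 \frac{\bm T_{\bm j,n-m}^{(\bk,d_1-2+2m)}(\bm y)\,\bm T_{\bm j,n-m}^{(\bk,d_1-2+2m)}(\bm v)}{\bm h_{\bm j,n-m}^{(\bk,d_1-2+2m),\triangle}}\\
 &\times (1-|\bm y|)^m(1-|\bm v|)^m\, Z_m^{\frac{d_1-2}{2}}\Big(\frac{\la \bm x,\bm u\ra}{(1-|\bm y|)(1-|\bm v|)}\Big).
\end{align*}
Note that this is exactly the pattern of \eqref{eq:reprodPd=1} for the $d_1=1$ kernel $\bm P_n^{1,d_2}(\bm W_{\mu,\bk}^{\BB\rtimes\triangle})$ with $\mu=\frac{d_1-2}{2}$. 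In that case $2\mu=d_1-2$, the simplex parameter is $(\bk,2\mu+2m)=(\bk,d_1-2+2m)$, and the Pochhammer ratio is $(|\bk|+2\mu+d_2+1)_{2m}/(2\mu+1)_{2m}$, which agrees with the one above. The key observation, and the reason only $\g=-1$ works, is that the spherical kernel has no extra $s$-integral, so the parameters line up without any shift.

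The next step is to express $\sP_n$ through the $d_1=1$ kernel. Using $Z_m^\mu(t)=C_m^\mu(1)C_m^\mu(t)/h_m^\mu$ exactly as in the proof of Proposition \ref{prop:Pd=IntPd=1}, this yields
$$
\sP_n\big(\sW_{-1,\bk}^{\SS\rtimes\triangle};(\bm x,\bm y),(\bm u,\bm v)\big)=\bm P_n^{1,d_2}\Big(\bm W_{\frac{d_1-2}{2},\bk}^{\BB\rtimes\triangle};\big(\tfrac{\la\bm x,\bm u\ra}{1-|\bm v|},\bm y\big),\big(1-|\bm v|,\bm v\big)\Big).
$$
Here $|\la \bm x,\bm u\ra|/(1-|\bm v|)\le 1-|\bm y|$ because $\|\bm x\|=1-|\bm y|$ and $\|\bm u\|=1-|\bm v|$, so the first point lies in $[-1,1]\rtimes\triangle^{d_2}$.

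The final step applies \eqref{eq:Pn_d=1} with $\mu=\frac{d_1-2}{2}$. This produces $Z_{2n}^{|\bk|+d_1+d_2-2}$ and weights $(1-t_{d_2+1}^2)^{\mu-1}$, $(1-t_{d_2+2}^2)^{\mu-1}$. Because $u=1-|\bm v|$, the factor $\sqrt{1-|\bm v|-u}$ vanishes, so the $t_{d_2+2}$ term drops from $\eta$. That variable then integrates against its normalized weight to $1$, since $c_{\mu-\f12}\int_{-1}^1(1-t^2)^{\mu-1}\,\d t=1$. The remaining $t_{d_2+1}$ term is
$$
\tfrac12\sqrt{1-|\bm y|+\tfrac{\la\bm x,\bm u\ra}{1-|\bm v|}}\,\sqrt{2(1-|\bm v|)}\;t_{d_2+1}=\sqrt{\tfrac12\big[\la\bm x,\bm u\ra+(1-|\bm y|)(1-|\bm v|)\big]}\;s,
$$
which is $\xi_0$ after renaming $t_{d_2+1}=s$, with constant $c_{\k_1}\cdots c_{\k_{d_2}}c_{\frac{d_1-3}{2}}$.

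The main obstacle is bookkeeping. First, I must confirm that the normalization $\sb^{\SS\rtimes\triangle}_{-1,\bk}$, with the $1/\o_{d_1}$ factor, makes the spherical harmonics orthonormal in a way consistent with the normalization of the Gegenbauer factor in the $d_1=1$ basis. Second, the case $d_1=2$ gives $\mu=0$ and must be handled via the limit \eqref{eq:limitInt}; this is already covered by the validity of \eqref{eq:Pn_d=1} under that limit. The same remark handles $\k_i=-\f12$.
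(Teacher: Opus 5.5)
Your proposal is correct and follows the paper's proof step for step. You sum over $\ell$ with the spherical addition formula and recognize the result as \eqref{eq:reprodPd=1} with $\mu=\frac{d_1-2}{2}$, evaluated at $\big(\la \bm x,\bm u\ra/(1-|\bm v|),\bm y\big)$ (the paper's $(1-|\bm y|)\la\xi,\eta\ra$) and $(1-|\bm v|,\bm v)$; you then apply \eqref{eq:Pn_d=1}, where the $t_{d_2+2}$ term vanishes. Your remark that this idle variable integrates to $1$ against $c_{\mu-\f12}(1-t^2)^{\mu-1}$ is a step the paper leaves implicit, and it accounts both for the drop from $d_2+2$ to $d_2+1$ integrals and for the single remaining factor $c_{\frac{d_1-3}{2}}$.
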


\begin{proof}
By \eqref{eq:QbasisV0} and \eqref{eq:Q_V0Norm} with $\g = -1$ so that $\a = d_1-2$, and using the addition formula 
for the spherical harmonics, the reproducing kernel satisfies,  
\begin{align*}
  \sP_n  \big(\sW_{-1, \bk}^{\SS\rtimes \triangle},  (\bm x, \bm y), & (\bm u, \bm v) \big) 
  =  \sum_{m=0}^n \sum_{|\bm j| = n-m} \sum_\ell \frac{(|\bk| + d_1+d_2-1)_{2m}}{(d_1-1)_{2m}}\frac{1}{\bm h_{\bm j,m}^{(\bk, \a+ 2m), \triangle}}\\
     & \times \bm T_{\bm j, n-m}^{(\bk, \alpha+2m)}(\bm y) \bm T_{\bm j, n-m}^{(\bk, \alpha+2m)}(\bm v) (1-|\bm y|)^m (1-|\bm v|)^m 
       Z_m^{\frac{d_1-2}{2}}(\la \xi,\eta \ra),
\end{align*}
where $\bm x  = (1-|\bm y|) \xi$ and  $\bm u  = (1-|\bm v|) \eta$. Comparing with \eqref{eq:reprodPd=1} with $\mu = \frac{d_1-2}{2}$, 
we obtain 
$$
\sP_n  \big(\sW_{-1, \bk}^{\SS\rtimes \triangle},  (\bm x, \bm y), (\bm u, \bm v) \big) 
 = \bm P_n^{1,d_2}\left(\bm W_{\frac{d_1-2}{2}, \bk}^{\SS\rtimes \triangle}; 
      \big((1-|\bm y|) \la \xi,\eta\ra,\bm y\big), (1-|\bm v|, \bm v)\right).  
$$
As a consequence, the integral identity follows from that of \eqref{eq:Pn_d=1} and its integrand becomes 
\begin{align*}
&  \xi_0(\bm x, \bm y, \bm u, \bm v; s, \bm t) = \eta\big( (1-|\bm y|) \la \xi,\eta\ra,\bm y, 1-|\bm v|, \bm v); (\bm t,s) \big)  \\ \qquad  & 
 = \sqrt{y_1 v_1} \, t_1 + \cdots + \sqrt{y_d v_d} \, t_d + \frac12 \sqrt{(1-|\bm y|)(1+\la \xi,\eta\ra)} \sqrt{2 (1-|\bm v|)}\, s, 
\end{align*}
which simplifies to the stated formula for $\xi_0(\bm x, \bm y, \bm u, \bm v;  s, \bm t)$ and completes the proof.
\end{proof}

As in the case of the solid domain, the addition formula shows a one-dimensional characteristic for the reproducing
kernel, which has important implications for the Fourier analysis on the surface $\SS^{d_1-1}\!\rtimes \triangle^{d_2}$. 
We need to restrict to the case $\g = -1$. 
For $f \in L^2\big(\SS^{d_1-1}\!\rtimes \triangle^{d_2},  {\sf W}_{-1,\bk}^{\SS\rtimes \triangle}$, 
the Fourier orthogonal expansion of $f$ is defined by 
$$
   f = \sum_{n=0}^\infty \proj_n \big(\sW_{-1, \bk}^{\SS\rtimes \triangle}; f\big)
$$
where $\proj_n(\sW_{-1, \bk}): L^2\big(\SS^{d_1-1}\!\rtimes \triangle^{d_2},  {\sf W}_{-1,\bk}^{\SS\rtimes \triangle}, 
\sW_{-1, \bk}\big) \mapsto \CV_n\big(\SS^{d_1-1}\!\rtimes \triangle^{d_2},  {\sf W}_{-1,\bk}^{\SS\rtimes \triangle}\big)$
denotes the orthogonal projection operator, which is an integral operator with the reproducing kernel 
$\sP_n\left( \sW_{-1,\bk}^{\SS\rtimes \triangle};  \cdot,\cdot\right)$ as its kernel.  
\iffalse
$$
  \proj_n\big(\sW_{-1, \bk}^{\SS\rtimes \triangle}; f, \bm x,\bm y)= 
  \int_{\SS^{d_1-1}\!\rtimes \triangle^{d_2}} f (\bm u, \bm v) \sP_n\big( \sW_{-1,\bk};  (\bm x,\bm y), (\bm u, \bm v) \big)
   \sW_{-1, \bk}^{\SS\rtimes \triangle} (\bm u,\bm v) \d \s(\bm u, \bm v).
$$
%For $n \in \NN_0$, we denote the $n$-th Fourier partial sum of the above expansion by $S_n^{\mu, \bk}$, 
%$$
%  S_n^{\mu, \bk}f  = \sum_{m=0}^n \proj_k^{\mu, \bk} f.   
%$$
\fi

Similarly to the solid domain $\BB^{d_1} \! \rtimes \triangle^{d_2}$, the addition formula leads to a pseudo-convolution 
structure. % that is an analog of the one defined on the solid domain. 

\begin{defn} 
For $\k_i \ge -\f12$, $1 \le i \le d$, let $ \a = |\bk| + d_1+d_2-2$. For $f\in L^1\big(\SS^{d_1-1}\!\rtimes \triangle^{d_2}\big)$ 
and $g \in L^1([-1, 1], w_{\a, -\f12})$, define the pseudo convolution of $f$ and $g$ by
\begin{equation} \label{eq:convolu0}
 f\ast_0 g (\bm x,\bm y) = \int_{\BB^{d_1} \! \rtimes \triangle^{d_2}} f(\bm u,\bm v) \sT^{\bk} g\big ( (\bm x,\bm y), (\bm u,\bm v) \big)
    \sW_{-1, \bk}^{\SS\rtimes \triangle} (\bm u, \bm v) \d \s(\bm u, \bm v)
\end{equation}
where the operator $g \mapsto \sT^{\bk} g$ is defined by 
\begin{align*}
\sT^{\bk} g \big ( (\bm x,\bm y), (\bm u,\bm v) \big)
 =  & \, \sb_{-1,\bk} \int_{[-1,1]^{d+1}} \int_{-1}^1 g \big( 2 \xi_0(\bm x, \bm y, \bm u, \bm v; s, \bm t)^2-1 \big) \notag \\
   &\times \prod_{i=1}^{d_2} (1-t_i^2)^{\k_i-\f12}  (1-s^2)^{\f{d_1-2}{2} - 1}  \d \bm t\, \d s.
\end{align*}
\end{defn}

From the addition formula, the definition implies readily that  
\begin{equation*}% \label{eq:proj=*0}
\proj_n\big(\sW_{-1,\bk}^{\SS\rtimes \triangle}; f, \bm x, \bm y\big) =
      f *_0 Z_n^{(\a, -\f12)} (\bm x, \bm y). %, \qquad \a = |\bk| + 2 \mu + d_1+d_2 - \tfrac12.
\end{equation*}
Under this pseudo-convolution, analogous results to those on $\BB^{d_1} \!\rtimes \triangle^{d_2}$ can all be established,
including Lemma \ref{lem:translateT} and Theorem \ref{thm:Young}, for example. We state only the analog of Theorem \ref{thm:cesaro}.

\iffalse Moreover, we also have an analog of Lemma \ref{lem:translateT}, which shows the bouhndeness of the operator $\sT^\bk g$
and, as a consequence, that the pseudo-convolution satisfies Young's inequality. 

\begin{thm} \label{thm:Young0}
Let $p,q,r \ge 1$ and $p^{-1} = r^{-1}+q^{-1}-1$. For $f \in L^q(\VV_0^{d_1,d_2}, \sW_{-1, \bk})$ and
$g \in L^r([-1,1]; w_{\a,-\f12})$ with $\a = |\bk| + d_1+d_2-2$,
\begin{equation} \label{eq:Young0}
  \|f \ast g\|_{L^p (\VV_0^{d_1,d_2}, \sW_{-1, \bk})} \le \|f\|_{L^q(\VV_0^{d_1,d_2}, \sW_{-1, \bk})}\|g\|_{L^r( [-1,1]; w_{\a,-\f12})}.
\end{equation}
\end{thm}

Finally, we state an analog of Theorem \ref{thm:cesaro} on the Ces\`aro $(C, \delta)$ means $S_n^\delta \left(\sW_{-1, \bk};f\right)$
of the Fourier orthogonal expansion, 
$$
 S_n^\delta \left(\sW_{-1, \bk};f\right) = \f{1}{\binom{n+\delta}{n}} \sum_{m=0}^n \binom{n-m+\delta}{n-m}\proj_n(\sW_{-1, \bk}; f),
$$
which is equal to $f *_0 k_n^{\delta,\a}$, by \eqref{eq:proj=*0}, with $k_n^{\delta,\a}(t)= k_n^\delta (w_{\a,-\f12}; t, 1)$ of the 
Fourier-Jacobi series, as in the case of the solid domain. 
 \fi
 
\begin{thm}\label{thm:cesaro_0}
Let $\k_i \ge -\f12$, $1 \le i\le d_2$, and $\a = |\bk| + d_1+d_2-2$. Then Ces\`aro $(C,\delta)$ means of the Fourier orthogonal 
expansion for $\sW_{-1, \bk}^{\SS \rtimes \triangle}$ on $\SS^{d_1-1}\!\rtimes \triangle^{d_2}$ satisfie the following: 
\begin{enumerate}
\item If $\delta \ge \a+ \f32$, then $S_n^\delta\big(\sW_{\mu,\bk}^{\SS\rtimes \triangle}; f\big)$ is a nonnegative operator;
\item If $\delta > \a +\f12$, then for $n= 0,1,2,\ldots$,
$$
    \left \|S_n^\delta\big( \sW_{-1,\bk}^{\SS\rtimes \triangle};  f\big)\right \|_{L^p\big(\SS^{d_1-1}\!\rtimes \triangle^{d_2},\sW_{-1,\bk}^{\SS\rtimes \triangle}\big)} \le 
     \|f\|_{L^p\big(\SS^{d_1-1}\!\rtimes \triangle^{d_2},\sW_{-1,\bk}^{\SS\rtimes \triangle}\big)}, \qquad 1 \le p \le \infty.
$$
\end{enumerate}
In particular, $S_n^\delta\big( \sW_{-1,\bk}^{\SS\rtimes \triangle};  f\big)$ converges to $f$ in
$L^p\big(\SS^{d_1-1}\!\rtimes \triangle^{d_2},\sW_{-1,\bk}^{\SS\rtimes \triangle}\big)$ if $\delta > \a + \f12$. 
\end{thm}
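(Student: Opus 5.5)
The proof follows the route used for Theorem \ref{thm:cesaro}. First I will express the Ces\`aro means as a pseudo-convolution. Since $S_n^\delta(\sW_{-1,\bk}^{\SS\rtimes \triangle};f)$ is a finite linear combination of the projections $\proj_m(\sW_{-1,\bk}^{\SS\rtimes \triangle};f)$, and each projection equals $f*_0 Z_m^{(\a,-\f12)}$, linearity of $g\mapsto \sT^\bk g$ gives
$$
S_n^\delta\big(\sW_{-1,\bk}^{\SS\rtimes \triangle};f\big) = f *_0 k_n^{\delta,\a}, \qquad k_n^{\delta,\a}(t) = k_n^\delta(w_{\a,-\f12};t,1),
$$
where $k_n^{\delta,\a}$ is the $(C,\delta)$ kernel of the Fourier--Jacobi series for $w_{\a,-\f12}$ with $\a=|\bk|+d_1+d_2-2$. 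The identity $\proj_m = f*_0 Z_m^{(\a,-\f12)}$ comes from Theorem \ref{thm:additionV0} together with the quadratic transformation $Z_m^{(\a-\f12,-\f12)}(2t^2-1)=Z_{2m}^\a(t)$. I will check the parameter indexing here against the solid case.

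For part (1), Askey's result gives $k_n^{\delta,\a}\ge 0$ when $\delta\ge \a+\f32$. The operator $\sT^\bk$ integrates $g(2\xi_0^2-1)$ against the product weight $\prod_i(1-t_i^2)^{\k_i-\f12}(1-s^2)^{\f{d_1-2}{2}-1}$, which is nonnegative. In the limiting cases $\k_i=-\f12$ or $d_1=2$ this integral is read via \eqref{eq:limitInt} as point evaluations, which are also positive. So $\sT^\bk k_n^{\delta,\a}\ge0$, and the integral operator with this kernel is nonnegative.

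For part (2), the key step is the analogue of \eqref{eq:Tbd}:
$$
\big\|\sT^\bk g((\bm x,\bm y),\cdot)\big\|_{L^p(\SS^{d_1-1}\rtimes\triangle^{d_2},\sW_{-1,\bk}^{\SS\rtimes\triangle})}\le \|g\|_{L^p([-1,1],w_{\a,-\f12})}.
$$
This bound is the main obstacle. I would prove it as in \cite[Lemma 6.3]{X21b}, via the Funk--Hecke-type identity.
\begin{enumerate}
\item Expand $g$ in the Jacobi polynomials $P_m^{(\a,-\f12)}$ and use the addition formula to show that integrating $\sT^\bk g$ against $Q_m\in\CV_m$ returns $\Lambda_m^{(\a,-\f12)}(g)\,Q_m(\bm x,\bm y)$.
\item Deduce that $\int |\sT^\bk g|\,\sW_{-1,\bk}\,\d\s\le \|g\|_{L^1(w_{\a,-\f12})}$. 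For this, first show that for nonnegative $g$ the integral of $\sT^\bk g$ equals $\Lambda_0(g)$, which is the $L^1$ norm up to the normalization built into $\sb_{-1,\bk}$. Then split a general $g$ into positive and negative parts.
\item Handle $p=\infty$ directly, since the weight in $\sT^\bk$ is a probability measure.
\item Interpolate (Riesz--Thorin) between $p=1$ and $p=\infty$.
\end{enumerate}
The delicate point is checking that the normalization constants $\sb_{-1,\bk}$ and $\sc_{-1,\bk}$ match, so that $\Lambda_0(1)=1$ exactly.

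Given this bound, Young's inequality for $*_0$ follows as in Theorem \ref{thm:Young}. In particular,
$$
\|f*_0 g\|_p\le \|f\|_p\,\|g\|_{L^1(w_{\a,-\f12})}.
$$
The kernels $k_n^{\delta,\a}$ are uniformly bounded in $L^1(w_{\a,-\f12})$ for $\delta>\a+\f12$ by \cite[Theorem 9.1.3]{Sz}, which gives the uniform bound in (2). Convergence in $L^p$ then follows from the density of polynomials, which are reproduced in the limit, together with the uniform boundedness.
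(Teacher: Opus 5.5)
This is essentially the paper's route. The paper gives no separate argument for Theorem \ref{thm:cesaro_0}: it only asserts that the analogues of Lemma \ref{lem:translateT} and Theorem \ref{thm:Young} hold for $*_0$, and then reads off (1) and (2) from the positivity and $L^1$-boundedness of the Jacobi $(C,\delta)$ kernel exactly as in Theorem \ref{thm:cesaro}, which is what you propose.

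When you check the indexing, note that $Z_{2m}^{\a}(t)=Z_m^{(\a-\f12,-\f12)}(2t^2-1)$. This makes the push-forward weight in your Funk--Hecke step $w_{\a-\f12,-\f12}$ rather than $w_{\a,-\f12}$; with $w_{\a,-\f12}$ the $L^1$ bound actually fails for $g$ concentrated near $t=1$. The corrected Jacobi thresholds $\delta>\a$ and $\delta\ge\a+1$ are still implied by the stated hypotheses, so the theorem survives. Finally, your argument gives (2) with a constant $c=\sup_n\|k_n^{\delta}\|_{L^1}$, as in Theorem \ref{thm:cesaro}. That constant reduces to $1$, matching the constant-free statement, only when the kernel is nonnegative.
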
 

\subsection{Distance functions on quadratic domains} 

It is folklore in approximation theory that an appropriate distance function on a compact domain with boundary 
should take into account the positions of the points. For example, an appropriate distance function for 
the unit ball $\BB^d$ is defined by 
$$
  \sd_{\BB}(\bm x, \bm y) =  \arccos \left[ \la \bm x, \bm y\ra + \sqrt{1- \| \bm x\|^2} \sqrt{1- |\bm y\|^2}\right], \qquad \bm x, \bm y \in \BB^d. 
$$
If two pairs of points have the same distance under this metric, one pair is located close to the boundary, and the other pair is 
located close to the center of the disk $\BB^2$, then the first pair will appear much closer than the second pair if one plotted them
on paper. Such an intrinsic distance is important for various questions in analysis on the compact domain. For example, the 
doubling weight function or doubling measure should be defined in terms of such a distance function 
(see, for example, \cite{DaiX, X21} and references therein).  

The distance $\sd_\BB$ on the unit ball agrees with the geodesic distance of the unit sphere $\SS^d$ of $\RR^{d+1}$, 
which has no boundary, defined by 
$$
\sd_{\SS} (\xi, \eta) = \arccos \la \xi, \eta\ra, \qquad \xi, \eta \in \SS^d,
$$
if we let $\xi = \big(\bm x, \sqrt{1-\|\bm x\|^2}\big)$ and $\eta = \big(\bm y, \sqrt{1-|\bm y\|^2}\big)$. 
As can be seen from \eqref{eq:additionF} and \eqref{eq:add-ball}, these two distance functions are 
closely related to the addition formula on $\SS^d$ and $\BB^d$. 

The same phenomenon appears for our new domains as well. We start with the domain $\SS^{d_1-1}\!\rtimes \triangle^{d_2}$, 
and define a distance function suggested by the addition formula \eqref{eq:additionV0}.  

\begin{prop}
Let $d_1 \ge 2$. For $(\bm x, \bm y), (\bm u,\bm v) \in \SS^{d_1-1}\!\rtimes \triangle^{d_2}$, define 
$$ 
  \sd_{\SS\rtimes \triangle}\big((\bm x, \bm y), (\bm u,\bm v) \big) = \arccos \left[ \sqrt{y_1 v_1} + \cdots + \sqrt{y_d v_d} 
     + \sqrt{\frac{\la \bm x,\bm u\ra +  (1-|\bm y|)(1- |\bm v|)}{2}} \right].
$$ 
Then $\sd_{\SS\rtimes \triangle}(\cdot,\cdot)$ defines a distance function on $\SS^{d_1-1}\!\rtimes \triangle^{d_2}$. 
\end{prop}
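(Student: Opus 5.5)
The plan is to realize $\sd_{\SS\rtimes \triangle}$ as the pullback of the geodesic distance $\sd_{\SS}$ on a unit sphere under an injective map. For $(\bm x,\bm y) \in \SS^{d_1-1}\!\rtimes \triangle^{d_2}$ write $\bm x = (1-|\bm y|)\xi$ with $\xi \in \SS^{d_1-1}$ (when $|\bm y|<1$). Define
$$
 \Phi(\bm x,\bm y) = \Big(\sqrt{y_1},\ldots,\sqrt{y_{d_2}},\ \sqrt{1-|\bm y|}\,\sqrt{\tfrac{1+\xi}{2}}\Big),
$$
where $\sqrt{(1+\xi)/2}$ is understood via the half-angle map $\SS^{d_1-1} \to \SS^{d_1-1}$. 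Concretely, $\xi\mapsto \omega(\xi)$ with $\la \omega(\xi),\omega(\eta)\ra = \sqrt{(1+\la\xi,\eta\ra)/2}$ is not available in general, so I will avoid it. Instead I will use the observation
$$
 \frac{\la \bm x,\bm u\ra + (1-|\bm y|)(1-|\bm v|)}{2} = \Big\la \tfrac{1}{\sqrt2}(\bm x, 1-|\bm y|),\ \tfrac{1}{\sqrt2}(\bm u, 1-|\bm v|)\Big\ra,
$$
where $X=(\bm x,1-|\bm y|)/\sqrt{2}$ lies on the cone $\|X'\| = X_{d_1+1}$ with norm $\sqrt{1-|\bm y|}$. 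The key step is to show the last term equals $\|X\|\,\|U\|\cos(\tfrac12 \sd_{\SS}(\xi,\eta))$, using $1+\cos\theta = 2\cos^2(\theta/2)$. Then the argument of $\arccos$ is $\la A, B\ra$ with
$$
 A = \big(\sqrt{y_1},\ldots,\sqrt{y_{d_2}}, \sqrt{1-|\bm y|}\, \e_{\xi}\big),
$$
where $\e_\xi$ ranges over a metric space whose ``angle'' is $\theta(\xi,\eta)=\tfrac12\arccos\la\xi,\eta\ra$. Since half the spherical geodesic distance is still a metric with values in $[0,\pi/2]$, the expression becomes a ``spherical cone'' inner product.

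First I will check well-definedness. All terms are nonnegative and, by Cauchy--Schwarz applied to the vectors $(\sqrt{y_i})$, $\sqrt{1-|\bm y|}$ and their counterparts, the bracket lies in $[0,1]$. The bracket equals $1$ if and only if $\bm y=\bm v$ and $\xi=\eta$ (the latter when $|\bm y|<1$; when $|\bm y|=1$ the $\bm x$ coordinate is $0$ anyway). This yields $\sd \ge 0$, identity of indiscernibles, and symmetry, which is obvious.

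The main obstacle is the triangle inequality. My approach is to interpret the quantity as the distance in a spherical join (iterated spherical suspension/cone) construction. The metric space $M = (\SS^{d_1-1}, \tfrac12\sd_{\SS})$ has diameter $\pi/2$. The spherical cone $C_\pi M$ with points $(r,p)$ and metric $\cos d = r r'\cos\,d_M(p,p') + \ldots$ is a standard construction. Joining with the positive orthant of the sphere, the point becomes $(\sqrt{y_1},\ldots,\sqrt{y_{d_2}}, \sqrt{1-|\bm y|}\,p)$ in the spherical join $\SS^{d_2-1}_+ * M$, and the join metric is exactly $\arccos[\sum\sqrt{y_iv_i} + \sqrt{(1-|\bm y|)(1-|\bm v|)}\cos d_M(\xi,\eta)]$. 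The spherical join of metric spaces of diameter $\le \pi$ is a metric space (Berestovskii; Bridson--Haefliger I.5.15). To keep things self-contained I would alternatively try a direct argument: for the triangle inequality with a third point, reduce to the case where the three $M$-points lie in a common geodesic, which embeds isometrically into a circle arc. Then the three points embed isometrically into an ordinary unit sphere $\SS^{d_2+1}$ via $p\mapsto(\cos\theta_p,\sin\theta_p)$, and the spherical geodesic triangle inequality applies. The subtle part is justifying that this reduction does not decrease the needed inequality, since $\tfrac12\sd_{\SS}$ restricted to three points is only a general metric. This is why I would rely on the join theorem, handling the boundary case $|\bm y|=1$ separately as a trivial cone point.
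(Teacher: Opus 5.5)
Your proposal is correct and takes a different route from the paper. In fact, your remark that a half-angle map into a sphere ``is not available'' identifies a genuine error in the paper's own proof.

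\textbf{The paper's argument fails.} The paper uses $F(\bm x,\bm y)=\big(\bm x/\sqrt{2(1-|\bm y|)},\,\sqrt{(1-|\bm y|)/2},\,\sqrt{y_1},\ldots,\sqrt{y_{d_2}}\big)\in\SS^{d_1+d_2}$ and asserts $\sd_{\SS\rtimes\triangle}=\sd_{\SS}\circ F$. Write $\bm x=(1-|\bm y|)\xi$ and $\bm u=(1-|\bm v|)\eta$. Then one actually gets $\la F(\bm x,\bm y),F(\bm u,\bm v)\ra=\sum_i\sqrt{y_iv_i}+\sqrt{(1-|\bm y|)(1-|\bm v|)}\,\tfrac{1+\la\xi,\eta\ra}{2}$. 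The bracket defining $\sd_{\SS\rtimes\triangle}$ instead contains $\sqrt{(1-|\bm y|)(1-|\bm v|)}\,\sqrt{\tfrac{1+\la\xi,\eta\ra}{2}}$.

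So the last equality in the paper's display confuses $\cos^2$ of the half angle with $\cos$. For $\bm y=\bm v=0$ and unit vectors $\bm x\perp\bm u$, one gets $\pi/4$ for $\sd_{\SS\rtimes\triangle}$ but $\pi/3$ for $\sd_{\SS}\circ F$.

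No map into a sphere can make $\sd_{\SS\rtimes\triangle}$ a pulled-back geodesic distance. Take $\bm y=0$ and $\bm x\in\{e_1,e_2,-e_1,-e_2\}$. The matrix of values of $\cos\sd_{\SS\rtimes\triangle}$ is circulant with first row $(1,\tfrac1{\sqrt2},0,\tfrac1{\sqrt2})$. It has the eigenvalue $1-\sqrt2<0$, so it is not a Gram matrix of unit vectors.

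\textbf{Your route works.} You identify $\cos\sd_{\SS\rtimes\triangle}$ with the spherical-join expression $\sum_i\sqrt{y_iv_i}+\sqrt{(1-|\bm y|)(1-|\bm v|)}\cos d_M(\xi,\eta)$, where $d_M=\tfrac12\sd_{\SS}$. This is what actually proves the statement. It also repairs the next proposition for $\BB^{d_1}\rtimes\triangle^{d_2}$, which the paper reduces to this one. Your check of identity of indiscernibles covers a point the paper leaves out.

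\textbf{A slip in your motivation.} You have $\|X\|=1-|\bm y|$, not $\sqrt{1-|\bm y|}$. So the last term equals $\sqrt{\|X\|\,\|U\|}\,\cos\big(\tfrac12\sd_{\SS}(\xi,\eta)\big)$. Your vector $A$ is nevertheless the right one.

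\textbf{Avoiding the join theorem.} You can drop the citation and rescue your ``direct argument'' by embedding three points at a time into $\SS^2$ rather than into a circle.
\begin{enumerate}
\item Take three points $(\bm x^{(k)},\bm y^{(k)})$ with $\bm x^{(k)}=(1-|\bm y^{(k)}|)\xi_k$, choosing $\xi_k$ arbitrarily if $|\bm y^{(k)}|=1$.
\item The three numbers $d_M(\xi_k,\xi_l)$ lie in $[0,\pi/2]$ and satisfy the triangle inequality.
\item By the spherical law of cosines there are $p_1,p_2,p_3\in\SS^2$ with $\la p_k,p_l\ra=\cos d_M(\xi_k,\xi_l)=\sqrt{(1+\la\xi_k,\xi_l\ra)/2}$.
\item Set $A_k=\big(\sqrt{y^{(k)}_1},\ldots,\sqrt{y^{(k)}_{d_2}},\sqrt{1-|\bm y^{(k)}|}\,p_k\big)\in\SS^{d_2+2}$. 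These are unit vectors with $\la A_k,A_l\ra=\cos\sd_{\SS\rtimes\triangle}\big((\bm x^{(k)},\bm y^{(k)}),(\bm x^{(l)},\bm y^{(l)})\big)$.
\item The triangle inequality then follows from that of $\sd_{\SS}$ on $\SS^{d_2+2}$.
\end{enumerate}
This is exactly the three-point content of the join theorem. It is also the correct form of the paper's embedding idea: a single global spherical embedding is impossible, but a separate one for each triple suffices.
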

 
\begin{proof}
By its definition, $\sd_{\SS\rtimes \triangle} (\cdot,\cdot)$ is symmetric in its variables. 
Since $\|\bm x\| = 1-|\bm y|$ and $\|\bm u\| = 1-|\bm v|$, we obtain
$$
0 \le \sqrt{y_1 v_1} + \cdots + \sqrt{y_d v_d} + \sqrt{\frac{\la \bm x,\bm u\ra +
   (1-|\bm y|)(1-|\bm v|)}{2}}  \le 1
$$
by the Cauchy-Schwarz inequality, so that $\sd_{\SS\rtimes \triangle} \big((\bm x, \bm y), (\bm u,\bm v) \big) \ge 0$ for all 
$\big((\bm x, \bm y), (\bm u,\bm v) \big) \in \SS^{d_1-1}\!\rtimes \triangle^{d_2}$. Furthermore, 
$\sd_{\SS\rtimes \triangle} \big((\bm x, \bm y), (\bm x,\bm y) \big) =
\arccos 1 = 0$. Thus, it remains to verify that $\sd_{\SS\rtimes \triangle} (\cdot,\cdot)$ satisfies the triangle inequality. 
Let $F: \SS^{d_1-1}\!\rtimes \triangle^{d_2} \mapsto \RR^{d_1 + d_2+1}$ be defined by
$$
  F(\bm x, \bm y) = \left(\frac{\bm x}{ \sqrt{2(1-|\bm y|)}}, \frac{\sqrt{1-|\bm y|}}{\sqrt{2}}, \sqrt{y_1}, \cdots, \sqrt{y_{d_2}}\right).
$$
Using $\|\bm x\| = 1- |\bm y|$, it is easy to see that $\|F(\bm x, \bm y)\|^2 = 1$, so that $F(\bm x, \bm y) \in \SS^{d_1+d_2}$. 
Moreover, for $(\bm x, \bm y), (\bm u, \bm v) \in \SS^{d_1-1}\!\rtimes \triangle^{d_2}$, 
setting $\bm x = (1-|\bm y|) \xi$ and $\bm u = (1-|\bm v|) \eta$, then
\begin{align*}
   \la F(\bm x, \bm y),  F(\bm u, \bm v) \ra 
       & =\frac{\la \bm x, \bm u\ra }{2 \sqrt{1-|\bm y|}\sqrt{1- |\bm v|}} + \frac{\sqrt{1-|\bm y|}\sqrt{1- |\bm v|}}{2}+ \sqrt{y_1 v_1}+ \cdots+
         \sqrt{y_d v_d} \\
      & =   \frac{\sqrt{1-|\bm y|}\sqrt{1- |\bm v|} (1+\la \xi, \eta\ra)}{2} + \sqrt{y_1 v_1}+ \cdots+ \sqrt{y_d v_d} \\
      & =  \sqrt{ \frac{\la \bm x, \bm y \ra+ (1-|\bm y|) (1- |\bm v|)}{2}}   + \sqrt{y_1 v_1}+ \cdots+ \sqrt{y_d v_d}.
\end{align*}
In particular, this implies immediately that 
$$
  \sd_{\SS\rtimes \triangle}\big((\bm x, \bm y), (\bm u,\bm v) \big) = \sd_{\SS} \big ( F(\bm x, \bm y), F(\bm u, \bm v)\big),
$$
where $\sd_\SS$ is the distance function of $\SS^{d_1+d_2}$. As a consequence, the triangle inequality 
of $\sd_{\SS\rtimes \triangle}$ follows from that of $\sd_\SS$. This completes the proof. 
\end{proof} 
 
Our next proposition gives a distance function on the solid domain $\BB^{d_1}\!\rtimes \triangle^{d_2}$. 

\begin{prop}
Let $d_1, d_2 \ge 1$. For $(\bm x, \bm y), (\bm u,\bm v) \in \BB^{d_1}\!\rtimes \triangle^{d_2}$, define 
\begin{align*}
 & \sd_{\BB\rtimes \triangle}\big((\bm x, \bm y), (\bm u,\bm v) \big)  =   \arccos \Big( \sqrt{y_1 v_1} + \cdots + \sqrt{y_d v_d}  \\
     & \qquad
     +   \sqrt{\frac12 \left( \la \bm x,\bm u\ra + (1-|\bm y|)(1- |\bm v|))+ \sqrt{(1-|\bm y|)^2- \|\bm x\|^2} \sqrt{(1-|\bm v|)^2- \|\bm u\|^2}\right)}\,  \Bigg ). 
\end{align*} 
Then $\sd_{\BB\rtimes \triangle}(\cdot,\cdot)$ defines a distance on $\BB^{d_1}\!\rtimes \triangle^{d_2}$. 
\end{prop}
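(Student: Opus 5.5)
We follow the proof of the previous proposition for $\SS^{d_1-1}\!\rtimes \triangle^{d_2}$ and realize $\sd_{\BB\rtimes \triangle}$ as the pull-back of the geodesic distance of a sphere. Symmetry is clear from the definition, and once the argument of $\arccos$ is identified with an inner product of unit vectors it lies in $[0,1]$. It then follows that $\sd_{\BB\rtimes \triangle}\ge 0$, that $\sd_{\BB\rtimes\triangle}((\bm x,\bm y),(\bm x,\bm y))=\arccos 1=0$, and that the triangle inequality is inherited from $\sd_\SS$.

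The key step is an embedding into a sphere that lifts the solid domain to the surface. Set $r(\bm y)=1-|\bm y|$ and $\bm x_{d_1+1} = \sqrt{r(\bm y)^2-\|\bm x\|^2}$. The map $(\bm x,\bm y)\mapsto (\tilde{\bm x},\bm y)$ with $\tilde{\bm x}=(\bm x, \bm x_{d_1+1})\in\RR^{d_1+1}$ sends $\BB^{d_1}\!\rtimes\triangle^{d_2}$ into $\SS^{d_1}\!\rtimes \triangle^{d_2}$, since $\|\tilde{\bm x}\| = 1-|\bm y|$. For two points we have
$$
\la \tilde{\bm x},\tilde{\bm u}\ra = \la \bm x,\bm u\ra + \sqrt{(1-|\bm y|)^2-\|\bm x\|^2}\sqrt{(1-|\bm v|)^2-\|\bm u\|^2}.
$$
Hence $\sd_{\BB\rtimes\triangle}((\bm x,\bm y),(\bm u,\bm v)) = \sd_{\SS\rtimes\triangle}((\tilde{\bm x},\bm y),(\tilde{\bm u},\bm v))$, where the right-hand side is the distance of the previous proposition with $d_1$ replaced by $d_1+1\ge 2$. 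Composing with the map $F$ from that proof gives a map $G(\bm x,\bm y)=F(\tilde{\bm x},\bm y)\in \SS^{d_1+d_2+1}$ such that $\sd_{\BB\rtimes\triangle}$ equals $\arccos\la G(\bm x,\bm y),G(\bm u,\bm v)\ra$. All required properties (range of the argument, nonnegativity, the triangle inequality) then follow directly from the previous proposition, or from the metric properties of $\sd_\SS$ on $\SS^{d_1+d_2+1}$.

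The only point needing care is the degenerate case $1-|\bm y|=0$, where $F$ divides by $\sqrt{2(1-|\bm y|)}$. There $\bm x=0$, and we interpret the first block $\tilde{\bm x}/\sqrt{2(1-|\bm y|)}$ as $0$. The identity for the inner product persists: both sides reduce to $\sum\sqrt{y_iv_i}$, since the term $\sqrt{(\la\tilde{\bm x},\tilde{\bm u}\ra+r(\bm y)r(\bm v))/2}$ vanishes when $r(\bm y)=0$. Alternatively, we can rewrite $F$ as $\bigl(\tilde{\bm x}/\sqrt{2r},\sqrt{r/2},\sqrt{y_i}\bigr)$ and note that $\tilde{\bm x}/\sqrt{2r}$ has norm $\sqrt{r/2}\to 0$, so defining it as $0$ at $r=0$ keeps $G$ well defined with values in the sphere.

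We must also confirm that $\sd_{\BB\rtimes\triangle}$ separates points, i.e.\ that the distance vanishes only on the diagonal. Equality in Cauchy--Schwarz gives $G(\bm x,\bm y)=G(\bm u,\bm v)$, which forces $\sqrt{y_i}=\sqrt{v_i}$ and then $\tilde{\bm x}=\tilde{\bm u}$, hence $\bm x=\bm u$. So $G$ is injective and $\sd_{\BB\rtimes\triangle}$ is a genuine metric. I expect the main obstacle to be this bookkeeping at the boundary $|\bm y|=1$; the rest is a direct reduction to the surface case.
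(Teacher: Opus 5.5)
Your lift $(\bm x,\bm y)\mapsto(\tilde{\bm x},\bm y)\in\SS^{d_1}\!\rtimes\triangle^{d_2}$ is correct and is exactly the paper's argument, and so is the resulting identity $\sd_{\BB\rtimes\triangle}=\sd_{\SS^{d_1}\rtimes\triangle^{d_2}}\circ(\text{lift})$. The gap is the next step: $G=F\circ(\text{lift})$ does not realize the distance, because the identity you import from the proof of the surface case is false. With $r_{\bm y}=1-|\bm y|$ and $\bm x=r_{\bm y}\xi$, $\bm u=r_{\bm v}\eta$ on the surface,
\[
\la F(\bm x,\bm y),F(\bm u,\bm v)\ra=\sum_{i}\sqrt{y_iv_i}+\sqrt{r_{\bm y}r_{\bm v}}\;\frac{1+\la\xi,\eta\ra}{2}.
\]
The distance instead contains $\sqrt{\frac12(\la\bm x,\bm u\ra+r_{\bm y}r_{\bm v})}=\sqrt{r_{\bm y}r_{\bm v}}\,\sqrt{(1+\la\xi,\eta\ra)/2}$. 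The two agree only if $\la\xi,\eta\ra=\pm1$ or $r_{\bm y}r_{\bm v}=0$. Concretely, for $\bm y=\bm v=\bm 0$, $\|\bm x\|=1$, $\bm u=\bm 0$ you get $\cos\sd_{\BB\rtimes\triangle}=1/\sqrt2$ but $\la G(\bm x,\bm 0),G(\bm 0,\bm 0)\ra=1/2$.

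For $d_1\ge2$ no map into any sphere can work. Take the points $(\bm e_1,\bm 0),(\bm e_2,\bm 0),(-\bm e_1,\bm 0),(-\bm e_2,\bm 0)$. The matrix of values $\cos\sd_{\BB\rtimes\triangle}$ has $1$ on the diagonal, $1/\sqrt2$ for consecutive points and $0$ for antipodal ones. The vector $(1,-1,1,-1)$ gives the quadratic form the value $4-4\sqrt2<0$, so this is not a Gram matrix.

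Hence everything you derive from $G$ is unsupported: the range $[0,1]$, separation via equality in Cauchy--Schwarz, the boundary bookkeeping, and the triangle inequality ``from the metric properties of $\sd_\SS$''. Citing the surface proposition instead does not close the gap by itself. Its statement is true, but the paper proves it with the same false identity for $F$.

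The statement is true, and the repair is short. Put $\hat\xi=\tilde{\bm x}/r_{\bm y}\in\SS^{d_1}$ (arbitrary if $r_{\bm y}=0$) and $\delta(\hat\xi,\hat\eta)=\frac12\arccos\la\hat\xi,\hat\eta\ra$. This is half the geodesic distance, hence a metric with values in $[0,\pi/2]$. The argument of $\arccos$ equals $\sum_i\sqrt{y_iv_i}+\sqrt{r_{\bm y}r_{\bm v}}\cos\delta(\hat\xi,\hat\eta)$.

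Cauchy--Schwarz for $(\sqrt{y_1},\ldots,\sqrt{y_{d_2}},\sqrt{r_{\bm y}})$ and $(\sqrt{v_1},\ldots,\sqrt{v_{d_2}},\sqrt{r_{\bm v}})$ gives the range $[0,1]$. The value $1$ forces $\bm y=\bm v$ and then, if $r_{\bm y}>0$, $\hat\xi=\hat\eta$, i.e.\ $\bm x=\bm u$.

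For the triangle inequality, write $\sqrt{r_{\bm y}}=\cos a$ and $(\sqrt{y_i})_i=\sin a\,\omega$ with $\omega\in\SS^{d_2-1}$ (arbitrary if $\bm y=\bm 0$). Then $\cos\sd=\cos a\cos b\cos\delta(\hat\xi,\hat\eta)+\sin a\sin b\la\omega,\omega'\ra$, a spherical join. Only three points enter at a time. Their three $\delta$-values satisfy the triangle inequality and sum to at most $3\pi/2$, so they are the geodesic distances of some $X_1,X_2,X_3\in\SS^2$. The unit vectors $(\cos a_k\,X_k,\sin a_k\,\omega_k)\in\RR^{3+d_2}$ then have pairwise inner products $\cos\sd$, and the triangle inequality on that sphere applies.

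A three-point embedding suffices even though a global one does not exist. The same argument, with $\SS^{d_1-1}$ in place of $\SS^{d_1}$, proves the surface proposition.
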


\begin{proof}
Taking a cue from the relation between the distance functions for $\BB^d$ and $\SS^d$, for $(\bm x, \bm y)$
and $(\bm u, \bm v)$ in $\BB^{d_1}\!\rtimes \triangle^{d_2}$, we introduce the notation
$$
  \bm X =  \left(\bm x, \sqrt{(1-|\bm y|)^2-\|\bm x\|^2}\right), \qquad  \bm U =  \left(\bm u, \sqrt{(1-|\bm v|)^2-\|\bm u\|^2}\right)
$$
and observe that $(\bm X, \bm y), (\bm U, \bm v) \in \SS^{d_1}\!\rtimes \triangle^{d_2}$. Our definition then shows
$$
  \sd_{\BB\rtimes \triangle}\big((\bm x, \bm y), (\bm u,\bm v) \big) = \sd_{\SS^{d_1}\!\rtimes \triangle^{d_2}} \big((\bm X, \bm y), (\bm U, \bm v)\big), 
$$
where we have used $\sd_{\SS\!\rtimes \triangle} = \sd_{\SS^{d_1}\!\rtimes \triangle^{d_2}}$ to indicate the dependence on 
dimensions. From this identity follows readily that $\sd_{\BB\rtimes \triangle}$ is indeed an appropriate distance on 
$\BB^{d_1}\!\rtimes \triangle^{d_2}$.
\end{proof}

One can see the trace of the distance function $\sd_{\BB\rtimes \triangle}$ in the addition formula \eqref{eq:addition}, just as 
one can see the trace of $\sd_{\SS\rtimes \triangle}$ in the addition formula \eqref{eq:additionV0}, or $\sd_\BB$ from the 
addition formula \eqref{eq:add-ball}. 

The spectral operator and the addition formula provide tools for analysis on the quadratic domains 
$\BB^{d_1}\!\rtimes \triangle^{d_2}$ and $\SS^{d_1-1}\!\rtimes \triangle^{d_2}$. In particular, they make 
it possible to work in the framework developed in \cite{X21} on the space of homogeneous type 
$\left(\BB^{d_1}\!\rtimes \triangle^{d_2}, \sW, \sd_{\SS\rtimes \triangle}\right)$ and 
$\left(\SS^{d_1-1}\!\rtimes \triangle^{d_2}, \sW, \sd_{\SS \rtimes \triangle}\right)$ with the distance
functions we just defined and the doubling weight $\sW$ and $\bm W$ defined via these distance functions. 

\section{Ball-Laguerre polynomials on the domain $\BB^{d_1} \rtimes \RR_\Sigma^{d_2}$}
\setcounter{equation}{0}

Recall that $\RR_\Sigma^d$ is the domain for alternative product Laguerre polynomials defined in \eqref{eq:R-Sig}.  
We work with orthogonal polynomials on $\BB^{d_1} \rtimes \RR_\Sigma^{d_2}$, which are reviewed in the first 
subsection, and derive the addition formula for the reproducing kernels in the second subsection. 

\subsection{Orthogonal polynomials}
Let $d_1$ and $d_2$ be two positive integers. Let $\BB^{d_1} \rtimes \RR_\Sigma^{d_2}$ be the quadratic 
domain in $\RR^{d_1+d_2}$ defined by
\begin{align*}
\BB^{d_1} \! \rtimes \RR_\Sigma^{d_2} %\, & = \left\{(\xb,\yb): \frac{\xb}{|\yb|} \in \BB^{d_1}, \quad \yb \in \RR_\Sigma^{d_2}\right\} 
        = \left\{(\xb,\yb) \in \BB^{d_1} \times \RR_\Sigma^{d_2}:  \| \xb \| \le |\yb|\right\}.
\end{align*}
For $\mu > -\f12$ and $\bk \in \RR^{d_2-1}$ with $\k_i > -1$, $1\le i \le d_2-1$, we define 
\begin{align*}
\Wb_{\mu, \kb}^{\BB\rtimes \Sigma}(\xb,\yb)  = \left[ |\bm y|^2- \|\bm x\|^2 \right]^{\mu - \f12} 
\prod_{i=1}^{d_2-1} y_i^{\k_i} \e^{-|\yb'| -|\yb|}
\end{align*}
for $(\xb,\yb) \in \BB^{d_1}\! \rtimes \RR_\Sigma^{d_2}$ and consider orthogonal polynomials with respect to the 
inner product defined by 
\begin{equation} \label{eq:ipdB-Sig}
  \la f, g\ra_{\mu, \bk}^{\BB\rtimes \Sigma}  = \bm b_{\mu,\bk}^{\BB\rtimes \Sigma}
   \int_{{\BB\rtimes \RR_\Sigma}} f(\bm x,\bm y) g(\bm x,\bm y) 
  \bm W^{\BB\rtimes \Sigma}_{\mu,\bk}(\bm x,\bm y) \d \bm x \d \bm y, 
\end{equation}
where $ \bm b_{\mu,\bk}^{\BB\rtimes \Sigma}$ is the normalization constant so that $ \la 1, 1\ra_{\mu, \bk} =1$
and it satisfies 
\begin{equation} \label{eq:b_B-Sig}
  \bm b_{\mu,\bk}^{\BB\rtimes \Sigma} = \bm b^\BB_\mu \times \bm b^L_{\bg} 
       \quad\hbox{with}\quad \bm \gamma = (\bk, d_1+ 2\mu -1) \in \RR^{d_2} 
\end{equation}
in terms of the normalization constant $\bm b_\mu^\BB$ in \eqref{eq:bB} for $\bm W_\mu^\BB$ and the constant
$\bm b_{\bm \gamma}^\Sigma$ of $\bm W_{\bm \gamma}^\Sigma$. 

Orthogonal polynomials on $\BB^{d_1} \rtimes \RR_\Sigma^{d_2}$ equipped with $\bm W_{\mu,\bk}^{\BB\rtimes \Sigma}$
are studied recently in \cite{X26} for all $d_1, d_2 \ge 1$, whereas the case $(d_1,d_2) = (d,1)$ was first studied in 
\cite{X20}, for which the domain is an infinite rotating cone in $\RR^{d+1}$. 
Let $\CV_n\big(\BB^{d_1} \rtimes \RR_\Sigma^{d_2}, \bm W_{\mu,\bk}^{\BB\rtimes \Sigma}\big)$ be the 
space of orthogonal polynomials of degree $n$ with respect to the inner product $\la \cdot, \cdot\ra_{\mu, \bk}$. 

Let $\{\bm B_{\bm k, m}^{\mu}: |\bm k|= m, \, \bm k \in \NN_0^{d_1}\}$ be an orthogonal basis for 
$\CV_m(\BB^{d_1},\bm W^\BB_\mu)$, and let $\{\hat \Lb_{\bm j, m}^{\bg}: |\bm j| = m, \, \bm j\in \NN_0^{d_2}\}$ 
be an orthogonal basis for $\CV_{m}\left(\RR_\Sigma^{d_2},\bm W^\Sigma_{\bg}\right)$. 
We denote by $\bm h_{\bm k,m}^{\mu,\BB}$ and $\bm h_{\bm j,m}^{\bg,\Sigma}$ the square norms
$$
  \bm h_{\bm k,m}^{\mu, \BB} =  \langle \bm B_{\bm k, m}^{\mu}, \bm B_{\bm k, m}^{\mu}\rangle_{\mu}^\BB
 \quad and \quad \bm h_{\bm j,m}^{\bg,\Sigma} = \big \langle \hat \Lb_{\bm j, m}^{\bg}, \hat \Lb_{\bm j, m}^{\bg} \big \rangle_{\bg}^\Sigma.
$$

\begin{prop}
For $\mu > -\f12$, $\bk\in \RR^{d_2-1}$ with $\k_j > -1$, $1 \le j \le d_2-1$, define
\begin{equation} \label{eq:OP_BSig}
 \bm Q_{\bm j, \bm k, m}^n(\bm x,\bm y) = \hat \Lb_{\bm j, n-m}^{(\bk, \alpha+2m)}(\bm y) |\bm y|^m 
      \bm B_{\bm k, m}^\mu \left(\frac{\bm x}{|\bm y|}\right),
\end{equation}
where $\alpha = 2 \mu + d_1-1$. Then $\{\bm Q_{\bm j, \bm k, m}^n: |\bm k| = m, \, |\bm j| = n-m, \, \bm k \in \NN_0^{d_1}, \, \bm j \in \NN_0^{d_2}, 0\le m \le n\}$ 
is an orthogonal basis of $\CV_n\big(\BB^{d_1}\!\! \rtimes \RR_\Sigma^{d_2}, \bm W_{\mu, \bk}^{\BB\rtimes \Sigma}\big)$. 
Moreover, the squared norm of $\bm Q_{\bm j, \bm k, m}^n$ is given by 
\begin{equation} \label{eq:OP_BSig_Norm}
   \bm h_{\bm j, \bm k, m}^{n; \mu,\bk}  %= \la \bm Q_{\bm j, \bm k, m}^n, \Qb_{\bm j,\bm k, m}^n \ra_{\mu, \bk} 
     %  =   \frac{\bm b^\triangle_{(\bk,\alpha)}}{\bm b^\triangle_{(\bk, \alpha + 2m)}}
       =  (2\mu+d_1)_{2m} \bm h_{\bm k,m}^{\mu, \BB}  
       \bm h_{\bm j,m}^{(\bk, \a+2m), \triangle}. 
\end{equation}
\end{prop}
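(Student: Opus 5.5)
This is the instance of Proposition \ref{prop:wrapped_orth} (Case 1) with $\Omega_1 = \BB^{d_1}$, $\Wb_1 = \bm W_\mu^\BB$, $\Omega_2 = \RR_\Sigma^{d_2}$ and $\rho(\yb) = |\yb|$, a polynomial of degree one. The plan is to verify the wrapped structure of the weight, invoke that proposition for orthogonality and completeness, and then compute the norms by a direct change of variables.

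\textbf{Step 1: the weight is a wrapped product.} On $\|\xb\|\le|\yb|$ we have
$$
\big[|\yb|^2-\|\xb\|^2\big]^{\mu-\f12} = |\yb|^{2\mu-1}\,\bm W_\mu^\BB(\xb/|\yb|),
$$
so
$$
\Wb_{\mu,\bk}^{\BB\rtimes\Sigma}(\xb,\yb) = \bm W_\mu^\BB(\xb/|\yb|)\,\Wb_2(\yb),
\qquad
\Wb_2(\yb) = |\yb|^{2\mu-1}\prod_{i=1}^{d_2-1} y_i^{\k_i}\,\e^{-|\yb'|-|\yb|}.
$$
Proposition \ref{prop:wrapped_orth} requires, for each $m$, an orthogonal basis of $\CV_{n-m}(\RR_\Sigma^{d_2}, \rho^{2m+d_1}\Wb_2)$. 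We have
$$
\rho^{2m+d_1}\Wb_2 = \prod_{i=1}^{d_2-1} y_i^{\k_i}\,|\yb|^{2m+2\mu+d_1-1}\,\e^{-|\yb'|-|\yb|},
$$
which is $\Wb^\Sigma_{(\bk,\a+2m)}$ from \eqref{eq:LaguerreSig} with $\a = 2\mu+d_1-1$. By \eqref{eq:basis_S} its orthogonal basis is $\hat\Lb^{(\bk,\a+2m)}_{\jb,n-m}$. Proposition \ref{prop:wrapped_orth} then yields that the family \eqref{eq:OP_BSig} is an orthogonal basis of $\CV_n$. Here $\rho$ is nonnegative on $\RR_\Sigma^{d_2}$ and $|\yb|^m \bm B^\mu_{\kb,m}(\xb/|\yb|)$ is a polynomial because $\rho$ has degree one.

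\textbf{Step 2: the norm.} Substitute $\xb = |\yb|\,\bm w$ with $\bm w\in\BB^{d_1}$, so $\d\xb = |\yb|^{d_1}\d\bm w$. The integral splits as
$$
\int_{\BB^{d_1}} \big[\bm B^\mu_{\kb,m}(\bm w)\big]^2 \bm W^\BB_\mu(\bm w)\,\d\bm w
\times
\int_{\RR_\Sigma^{d_2}} \big[\hat\Lb^{(\bk,\a+2m)}_{\jb,n-m}(\yb)\big]^2 \Wb^\Sigma_{(\bk,\a+2m)}(\yb)\,\d\yb.
$$
Including the normalization constants gives
$$
\bm h^{n;\mu,\bk}_{\jb,\kb,m} = \frac{\bm b^L_{(\bk,\a)}}{\bm b^L_{(\bk,\a+2m)}}\,\bm h^{\mu,\BB}_{\kb,m}\,\bm h^{(\bk,\a+2m),\Sigma}_{\jb,n-m}.
$$
This uses $\bm b^{\BB\rtimes\Sigma}_{\mu,\bk} = \bm b^\BB_\mu\,\bm b^L_{(\bk,\a)}$ from \eqref{eq:b_B-Sig}, and the fact that the change of variables of \eqref{eq:LaguerreSig} has unit Jacobian, so the $\Sigma$ constants equal the Laguerre constants $\bm b^L$. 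From $\bm b^L_\bk = \prod_i\Gamma(\k_i+1)^{-1}$, only the last component differs, and the ratio is
$$
\frac{\Gamma(\a+2m+1)}{\Gamma(\a+1)} = (\a+1)_{2m} = (2\mu+d_1)_{2m}.
$$
This is the stated factor. The subscript on $\bm h_{\jb,m}$ in \eqref{eq:OP_BSig_Norm} (and the superscript $\triangle$ there, presumably meaning $\Sigma$) should be read accordingly as $\bm h^{(\bk,\a+2m),\Sigma}_{\jb,n-m}$.

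\textbf{Main obstacle.} The only delicate point is bookkeeping of normalization constants. One must make sure that $\bm b^\Sigma$ coincides with $\bm b^L$ under the bijection $\RR_+^{d_2}\to\RR_\Sigma^{d_2}$, which has Jacobian $1$. One must also make sure that the factor $|\yb|^{d_1}$ from $\d\xb$, together with $|\yb|^{2m}$ from the squared $|\yb|^m$ factor, raises the exponent of $|\yb|$ exactly from $\a$ to $\a+2m$. Once these are checked, the factor $(2\mu+d_1)_{2m}$ follows immediately.
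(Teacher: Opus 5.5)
Your proposal is correct and follows essentially the same route as the paper, which gives no argument beyond citing \cite[Proposition 5.1]{X26}. There the result is the Case 1 instance of Proposition \ref{prop:wrapped_orth} with $\rho(\yb)=|\yb|$ and $\rho^{2m+d_1}\Wb_2=\Wb^\Sigma_{(\bk,\a+2m)}$, and the norm comes from the scaling $\xb=|\yb|\bm w$ as the ratio $\bm b^L_{(\bk,\a)}/\bm b^L_{(\bk,\a+2m)}=(2\mu+d_1)_{2m}$, the same form the paper records in Proposition \ref{prop:OP_SS-SIg}. You are also right that the factor $\bm h_{\jb,m}^{(\bk,\a+2m),\triangle}$ in the statement should be read as the squared $\Sigma$-norm of $\hat\Lb^{(\bk,\a+2m)}_{\jb,n-m}$.
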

  
This is Proposition 5.1 of \cite{X26} and the case $d_2 = 1$ was first studied in \cite{X20}. 
The case $(d_1, d_2)  = (1,d)$ is particularly important for our study, for which the domain becomes 
$$
 [-1,1] \rtimes \RR_\Sigma^d = \left \{(x, \yb) \in \RR \times \RR_\Sigma^d: |x| \le |\yb| \right\},
$$
which is affine equivalent to $\RR_+^{d+1}$ under $\yb \mapsto \vb = (y_1,\ldots, y_{d-1}, \frac12(|\yb|- x), \frac12 (|\yb|+x))$.

\begin{thm} \label{thm:Q_BSig_d1=1}
The mapping $(x, \bm y) \in [-1,1] \rtimes \RR_\Sigma^{d} \mapsto \bm u \in \RR_+^{d+1}$ defined by
$$
  \bm u = \phi(x, \bm y) = \left(y_1, \ldots, y_{d-1}, \, \tfrac12(|\bm y|-x),  \tfrac12(|\bm y|+x) \right)
$$
is a bijection between $\CV_n\big([-1,1]\rtimes \RR_\Sigma^{d_2}, \bm W_{\mu,\bk}^{\BB\rtimes \Sigma}\big)$ 
and  $\RR_+^{d+1}$. Moreover, under this linear transformation, the weight function 
$\bm W_{\mu,\bk}^{\BB\rtimes \Sigma}(x, \bm y)$ with $\bk = (\k_1,\ldots, \k_d)$ becomes
$$
    \bm W_{\bm \b}^L (\ub) = 2^{2\mu-1} u_1^{\k_1} \cdots u_{d-1}^{\k_{d-1}} \ub_d^{\mu-\f12} u_{d+1}^{\mu -\f12}
      \e^{-|\ub|},
$$
where ${\bm \beta} = (\k_1,\ldots, \k_{d-1}, \mu-\f12,\mu-\f12) \in \RR^{d+1}$. In particular, let $\{\bm L_{\bm j, n}^{\bm \b}: 
|\bm j|  = n, \, \bm j \in \NN_0^{d+1}\}$ be an orthogonal basis of $\CV_n(\RR_+^{d+1}, \bm W^L_{\bm \b})$ on 
the $\RR_+^{d+1}$. Then 
\begin{equation} \label{eq:OP_B-Sig_d1=1}
   \bm Q_{\bm j, n}^{\mu, \bk} (x,\bm y) = \bm L_{\bm j, n}^{\bm \b}\left(y_1, \ldots,  y_{d-1}, 
       \tfrac12(|\bm y|-x),  \tfrac12(|\bm y|+x) \right)
\end{equation}
for $|\bm j| = n$ consist of an orthogonal basis for $\CV_n\big([-1,1]\rtimes \RR_\Sigma^d, \bm W_{\mu, \bk}^{[-1,1] \rtimes \Sigma}\big)$. 
\end{thm}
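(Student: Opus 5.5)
The argument mirrors the proof of Theorem \ref{thm:QOPd1=1}, with the simplex replaced by the positive orthant. First write down the inverse of $\phi$. From $u_d = \tfrac12(|\bm y|-x)$ and $u_{d+1} = \tfrac12(|\bm y|+x)$ we get $x = u_{d+1}-u_d$ and $u_d+u_{d+1} = |\bm y|$, while $y_i = u_i$ for $1\le i\le d-1$. Hence
$$
y_d = |\bm y| - |\bm y'| = u_d+u_{d+1} - (u_1+\cdots+u_{d-1}),
$$
where $\bm y' = (y_1,\ldots,y_{d-1})$. So $\phi$ is linear and invertible, and the absolute value of its Jacobian determinant is a nonzero constant ($\tfrac12$ for $\phi$, or $2$ for the inverse, as in the simplex case).

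Next we check that $\phi$ maps the domain onto the orthant. The conditions $y_1,\ldots,y_{d-1}\ge 0$ are exactly $u_1,\ldots,u_{d-1}\ge0$. The condition $|x|\le |\bm y|$ is $|u_{d+1}-u_d| \le u_d+u_{d+1}$, which is equivalent to $u_d\ge0$ and $u_{d+1}\ge 0$. This condition also forces $|\bm y|\ge 0$, so $\bm y\in\RR_\Sigma^d$ is automatic. Therefore $\phi$ is a bijection from $[-1,1]\rtimes\RR_\Sigma^d$ onto $\RR_+^{d+1}$. The statement's phrasing of the bijection as being between $\CV_n(\cdot)$ and $\RR_+^{d+1}$ should be read as a bijection between the domains.

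The key computation is the weight. Since $|\bm y|^2 - x^2 = (|\bm y|-x)(|\bm y|+x) = 4u_du_{d+1}$, we get
$$
\left[|\bm y|^2-x^2\right]^{\mu-\f12} = 2^{2\mu-1}u_d^{\mu-\f12}u_{d+1}^{\mu-\f12}.
$$
The factor $\prod_{i\le d-1} y_i^{\k_i}$ becomes $\prod_{i\le d-1}u_i^{\k_i}$. For the exponential, $|\bm y'| + |\bm y| = (u_1+\cdots+u_{d-1}) + (u_d+u_{d+1}) = |\bm u|$, so $\e^{-|\bm y'|-|\bm y|} = \e^{-|\bm u|}$. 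This is the main point to get right. In the ball–Laguerre weight with $d_1=1$ there is no separate power of $|\bm y|$; only the indices $\k_1,\ldots,\k_{d-1}$ appear, which matches $\bm\b\in\RR^{d+1}$. The parameter indexing in the statement (with $\bk$ listed up to $\k_d$) is a harmless notational overlap. Altogether $\bm W^{\BB\rtimes\Sigma}_{\mu,\bk}(x,\bm y) = \bm W^L_{\bm\b}(\phi(x,\bm y))$.

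Finally, for the orthogonality we change variables $\bm u = \phi(x,\bm y)$ in $\la f,g\ra^{\BB\rtimes\Sigma}_{\mu,\bk}$. The integral becomes a constant multiple of
$$
\int_{\RR_+^{d+1}} (f\circ\phi^{-1})(\bm u)\,(g\circ\phi^{-1})(\bm u)\,\bm W^L_{\bm\b}(\bm u)\,\d\bm u.
$$
The map $\phi$ is linear, so composition with $\phi$ or $\phi^{-1}$ preserves polynomial degree exactly and maps $\Pi_n$ onto $\Pi_n$. Therefore $P\mapsto P\circ\phi$ carries $\CV_n(\RR_+^{d+1},\bm W^L_{\bm\b})$ isomorphically onto $\CV_n\big([-1,1]\rtimes\RR_\Sigma^d, \bm W^{\BB\rtimes\Sigma}_{\mu,\bk}\big)$. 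Applying this to the product Laguerre basis \eqref{eq:prod_L} shows that the polynomials $\bm Q^{\mu,\bk}_{\bm j,n}$ in \eqref{eq:OP_B-Sig_d1=1} form an orthogonal basis.
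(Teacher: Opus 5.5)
Your proposal is correct and follows the paper's proof essentially step by step. Like the paper, you invert $\phi$, check that $|x|\le|\bm y|$ becomes $u_d,u_{d+1}\ge 0$, use $|\bm u|=|\bm y'|+|\bm y|$ and $(|\bm y|-x)(|\bm y|+x)=4u_du_{d+1}$ for the weight, and transfer orthogonality through the linear change of variables; your reading that only $\k_1,\ldots,\k_{d-1}$ enter the weight is the right one, since the paper's proof writes $\prod_{i=1}^d y_i^{\k_i}$, which is a slip.
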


\begin{proof}
The inverse of the linear transform $\ub \mapsto (x, \bm y)$ is easily seen as given by 
$$
  x= u_{d+1}-u_d, \, y_1 = u_1, \ldots, y_{d-1} = u_{d-1}, \, y_d = |\bm u| - 2 (u_1+ \cdots + u_{d-1}). 
$$
It follows that $u_d+u_{d+1} = |\bm y|$, so that $|x| \le |\bm y|$ for $(x, \bm y) \in [-1,1]\rtimes \RR_\Sigma^d$ 
becomes $|u_d- u_{d+1}| \le u_d+u_{d+1}$, which is equivalent to $u_d \ge 0$ and $u_{d+1} \ge 0$, so $\ub \in \RR_+^{d+1}$. 
Furthermore, $|\ub| = |\yb| + |\yb'|$, so that 
$$
  \bm W_{\mu,\bm \k}^{[-1,1]\rtimes \Sigma}(x, \bm y) = \prod_{i=1}^d y_i^{\k_i} (|\bm y|-x)^{\mu-\f12} 
      (|\bm y|+x)^{\mu-\f12} \e^{-|\yb|- |yb'|} = 2^{2\mu-1} \bm W_{\bm \b}^L (\ub)  
$$
with $\bm u \in \RR_+^{d+1}$. Thus, an orthogonal basis for $\CV_n\big([-1,1]\rtimes \RR_\Sigma^{d_2}, 
\bm W_{\mu,\bk}^{\BB\rtimes \Sigma}\big)$ can be derived from that of $\CV_n(\RR_+^{d+1}, \bm W^L_{\bm \b})$.
\end{proof}

\subsection{Addition formula} 
We establish an addition formula for the orthogonal polynomials on $\BB^{d_1} \rtimes \RR_\Sigma^{d_2}$. 
In terms of the orthogonal basis in \eqref{eq:OP_BSig}, the kernel is given by
\begin{equation}\label{eq:reprodP_BSig}
\bm P_n^{d_1,d_2}\! \left(\bm W_{\mu, \bk}^{\BB\times \Sigma}; (\bm x,\bm y), (\bm u, \bm v)\right) = \sum_{m=0}^n 
  \sum_{|\bm j| = n-m} \sum_{|\bm k| = m} \frac{\bm Q_{\bm j, \bm k, m}^n(\bm x,\bm y)\bm Q_{\bm j, \bm k, m}^n(\bm u,\bm v)}{
   \la \bm Q_{\bm j, \bm k, m}^n,\bm Q_{\bm j, \bm k, m}^n\ra_{\mu, \bk}^{\BB\rtimes \Sigma} }.
\end{equation}
Like the case in $\BB^{d_1} \rtimes \triangle^{d_2}$, we first consider the case $(d_1,d_2) = (1,d)$, for which the
orthogonal polynomials can be expressed in terms of the product Laguerre polynomials by \eqref{eq:OP_B-Sig_d1=1}.
In particular, it follows from 

\begin{prop}  \label{prop:Pn_BSig_d1=1}
Let $\bk =(\k_1,\ldots, \k_d)$, $\k_i > - 1/2$ and $\mu > 0$. Then, for $(x, \bm y)$ and $(u, \bm v)$ in 
$\BB^1 \rtimes \RR_\Sigma^{d} = [-1,1] \rtimes \RR_\Sigma^{d}$,
\begin{align*} 
 & \bm P_n^{1,d} \left(\bm W_{\mu, \bk}^{[-1,1] \rtimes \Sigma}; (x,\bm y), (u, \bm v)\right) 
  = \cb_{\mu,\bk} \int_{[0,\pi]^{d+1}} L_n^{|\bk| + 2\mu +d-1} \big(\eta(x, \bm y, u, \bm v; \bm \t) \big) \\
  & \times \prod_{i=1}^{d-1} j_{\k_i-\f12}\big ( \sqrt{y_i v_i} \sin \t_i\big) j_{\mu-\f12} 
   \Big(\sqrt{\tfrac{|\yb|+x}2} \sqrt{\tfrac{|\vb|+u}{2}} \sin \t_{d} \Big) 
   j_{\mu-\f12} \Big(\sqrt{\tfrac{|\yb|-x}2} \sqrt{\tfrac{|\vb|-u}{2}} \sin \t_{d+1} \Big) \\
 &\times \exp \left( -\sum_{i=1}^{d-1} \sqrt{y_i v_i} \cos \t_i- \f12 \sqrt{(|\yb|+x) (|\vb|+u)} \cos \t_{d} - \f12 \sqrt{(|\yb|-x) (|\vb|-u)} 
 \cos \t_{d+1}\right)\\
  & \times \prod_{i=1}^{d-1} (\sin \t_i)^{\k_i} (\sin \t_d)^{2\mu-1} (\sin \t_{d+1})^{2\mu-1} \d \bm{\t},
\end{align*}
where $\cb_{\mu,\bk} =  \cb_{\bk, \mu-\f12, \mu - \f12}^L$, and
\begin{align}\label{eq:eta()BSig}
 \eta(x, \bm y, u, \bm v; \bm t)  = & |\yb'| + |\yb| +|\vb'| + |\vb| + 2 \sum_{i=1}^{d-1} \sqrt{y_i v_i} \cos \t_i \\
   & +  \sqrt{(|\yb|+x) (|\vb|+u)} \cos \t_{d} + \sqrt{(|\yb|-x) (|\vb|-u)} \cos \t_{d+1}. \notag
\end{align}
\end{prop}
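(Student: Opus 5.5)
The plan mirrors the simplex case (the proposition giving \eqref{eq:Pn_d=1}): transfer the reproducing kernel on $[-1,1]\rtimes\RR_\Sigma^d$ to the product Laguerre kernel on $\RR_+^{d+1}$ via Theorem \ref{thm:Q_BSig_d1=1}, and then apply the Laguerre addition formula \eqref{eq:addL}.

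\emph{Transfer.} By Theorem \ref{thm:Q_BSig_d1=1}, $\phi$ is an affine bijection, $\bm W_{\mu,\bk}^{[-1,1]\rtimes\Sigma}=2^{2\mu-1}\bm W^L_{\bm\b}\circ\phi$, and the Jacobian of $\phi$ is a constant. Hence the inner product \eqref{eq:ipdB-Sig} with $d_1=1$, including its normalization constant, is the pullback of $\la\cdot,\cdot\ra^L_{\bm\b}$. Both are normalized so that $\la 1,1\ra=1$, so any multiplicative constants cancel. The orthogonal basis \eqref{eq:OP_B-Sig_d1=1} therefore has the same norms as $\bm L^{\bm\b}_{\jb,n}$. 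By \eqref{eq:reprod-kernel},
$$
\bm P_n^{1,d}\big(\bm W_{\mu,\bk}^{[-1,1]\rtimes\Sigma};(x,\yb),(u,\vb)\big)=\Pb_n\big(\Wb^L_{\bm\b};\phi(x,\yb),\phi(u,\vb)\big),
$$
with $\bm\b=(\k_1,\ldots,\k_{d-1},\mu-\f12,\mu-\f12)\in\RR^{d+1}$. The hypotheses $\k_i>-\f12$ and $\mu>0$ ensure that every entry of $\bm\b$ exceeds $-\f12$, which is exactly what \eqref{eq:addL} requires. It is also why the limiting cases are excluded here, in contrast to the simplex case.

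\emph{Substitution.} Next apply \eqref{eq:addL} in dimension $d+1$. The Laguerre index becomes $|\bm\b|+(d+1)-1=|\bk'|+2\mu-1+d$, which matches $L_n^{|\bk|+2\mu+d-1}$ once $|\bk|$ is read as $\k_1+\cdots+\k_{d-1}$. The constant is $\cb^L_{\bm\b}=\cb_{\mu,\bk}$. For $\ub=\phi(x,\yb)$ and $\bm w=\phi(u,\vb)$ I will compute the following. First, $|\ub|=|\yb'|+|\yb|$. Next, $\sqrt{u_iw_i}=\sqrt{y_iv_i}$ for $i\le d-1$. Finally, the last two coordinates give $\sqrt{u_dw_d}=\f12\sqrt{(|\yb|-x)(|\vb|-u)}$ and $\sqrt{u_{d+1}w_{d+1}}=\f12\sqrt{(|\yb|+x)(|\vb|+u)}$. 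Substituting these into the argument of $L_n$, into the exponential, and into the $j_{\b_i-\f12}$ factors gives the stated $\eta$ in \eqref{eq:eta()BSig} and the stated integrand. The weight $(\sin\t_i)^{2\b_i}$ produces $(\sin\t)^{2\mu-1}$ for the last two variables.

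\emph{Bookkeeping.} The only step needing care is matching the details. The angles $\t_d$ and $\t_{d+1}$ must be swapped so that $\t_d$ pairs with the $(|\yb|+x)$ factor. This is harmless because the two coordinates carry the same parameter $\mu-\f12$. The factor of $2$ inside $\eta$ should be checked: $2\sqrt{u_dw_d}=\sqrt{(|\yb|-x)(|\vb|-u)}$, consistent with \eqref{eq:eta()BSig}. The exponents of $\sin\t_i$ for $i\le d-1$ should read $2\k_i$, and the statement's $\k_i$ appears to be a typo. No genuine obstacle is expected beyond this careful verification of constants and exponents.
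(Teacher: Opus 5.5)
Your proposal is correct and follows the paper's own argument. The paper likewise uses Theorem~\ref{thm:Q_BSig_d1=1} to write the kernel as $\Pb_n\big(\Wb^L_{\bm\b};\phi(x,\yb),\phi(u,\vb)\big)$ and then invokes \eqref{eq:addL}, leaving implicit the coordinate bookkeeping you carry out. One further correction your check should catch: with $\b_d=\b_{d+1}=\mu-\f12$, the Bessel factors coming from \eqref{eq:addL} are $j_{\b_d-\f12}=j_{\mu-1}$, consistent with the weight $(\sin\t)^{2\mu-1}$, so your substitution does not reproduce the printed $j_{\mu-\f12}$, which is a typo just like $(\sin\t_i)^{\k_i}$.
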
 

This follows immediately from Theorem \ref{thm:Q_BSig_d1=1}, in particular, \eqref{eq:OP_B-Sig_d1=1} which
shows that 
$$
\bm P_n^{1,d} \left(\bm W_{\mu, \bk}^{\BB\rtimes \Sigma}; (x, \yb), (u,\vb)\right) 
= \bm P_n\left(\bm W_{\bm \b}^L; \phi(x, \yb), \phi(u, \vb) \right),
$$ 
so that the stated identity follows from the addition formula in \eqref{eq:addL}. 

Our next proposition is an analog of Proposition \ref{prop:Pd=IntPd=1}, which relates the reproducing
kernel for $(d_1,d_2)$ to that of $(1, d_2)$. 

\begin{prop} \label{prop:Pd=IntPd_BSig}
For $\mu > 0$ and $\bk =(\k_1,\ldots, \k_d)$ with $\k_i > -\f12$, 
\begin{align} \label{eq:P=IntPBSig}
  & \bm P_n^{d_1,d_2} \big(\bm W_{\mu, \bk}^{\BB\rtimes \Sigma}; (\bm x,\bm y), (\bm u, \bm v)\big)  \\
   = &\, c_{\mu-\f12} \int_{-1}^1 
     \bm P_n^{1,d_2}\left(\bm W_{\mu+\frac{d_1 -1}{2}, \bk}^{[-1,1]\rtimes \Sigma}; 
         \big(\zeta (\bm x,\bm y,  \bm u,\bm v; s),\bm y\big), \big(|\bm v|, \bm v\big) \right) 
  (1-s^2)^{\mu-1} \d s, \notag 
\end{align} 
where  
$$
\zeta (\bm x,\bm y,  \bm u,\bm v; s) = \frac{\la \bm x, \bm u \ra + s \sqrt{|\bm y|^2 -\|\bm x\|^2}\sqrt{|\bm v|^2 -\|\bm u\|^2}}{|\bm v|},
$$
and $|\zeta (\bm x,\bm y,  \bm u,\bm v; s)|\le 1$ for $(\bm x, \bm y), (\bm u, \bm v) \in \BB^{d_1}\! \rtimes \RR_\Sigma^{d_2}$ 
and $s \in [-1,1]$. 
\end{prop}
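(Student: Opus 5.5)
The plan is to mirror the proof of Proposition \ref{prop:Pd=IntPd=1}, replacing the simplex factor $1-|\bm y|$ by $|\bm y|$ and the Jacobi polynomials $\bm T$ by the Laguerre-type polynomials $\hat \Lb$. First, I insert the basis \eqref{eq:OP_BSig} and its norms \eqref{eq:OP_BSig_Norm} into \eqref{eq:reprodP_BSig}. The sum over $|\bm k| = m$ of $\bm B_{\bm k,m}^\mu(\bm x/|\bm y|)\bm B_{\bm k,m}^\mu(\bm u/|\bm v|)/\bm h_{\bm k,m}^{\mu,\BB}$ is the ball kernel $\bm P_m(\bm W_\mu^\BB; \bm x/|\bm y|, \bm u/|\bm v|)$. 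This gives
$$
\bm P_n^{d_1,d_2} = \sum_{m=0}^n \frac{1}{(2\mu+d_1)_{2m}} \sum_{|\bm j|=n-m} \frac{\hat\Lb_{\bm j,n-m}^{(\bk,\a+2m)}(\bm y)\hat\Lb_{\bm j,n-m}^{(\bk,\a+2m)}(\bm v)}{\bm h_{\bm j,n-m}^{(\bk,\a+2m),\Sigma}} |\bm y|^m|\bm v|^m \bm P_m\Big(\bm W_\mu^\BB; \tfrac{\bm x}{|\bm y|}, \tfrac{\bm u}{|\bm v|}\Big).
$$
Next I apply the ball addition formula \eqref{eq:add-ball} to $\bm P_m(\bm W_\mu^\BB;\cdot,\cdot)$. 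The argument of $Z_m^{\mu+\frac{d_1-1}{2}}$ is
$$
\frac{\la \bm x,\bm u\ra + s\sqrt{|\bm y|^2-\|\bm x\|^2}\sqrt{|\bm v|^2-\|\bm u\|^2}}{|\bm y||\bm v|} = \frac{\zeta(\bm x,\bm y,\bm u,\bm v;s)}{|\bm y|}.
$$
Moving the integral in $s$ outside the finite sum, the integrand becomes a function of $x:=\zeta$ alone, with $\bm y,\bm v$ fixed.

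Then I specialize the same computation to $d_1 = 1$ with $\mu$ replaced by $\mu' = \mu+\frac{d_1-1}{2}$. Since $2\mu'+1 = 2\mu+d_1$, the parameter $\a' = 2\mu'$ equals $\a$ and the Pochhammer factor $(2\mu'+1)_{2m}$ equals $(2\mu+d_1)_{2m}$. For $d_1=1$ the ball basis is $C_m^{\mu'}$, and the identity $C_m^{\mu'}(1)C_m^{\mu'}(t)/h_m^{\mu'} = Z_m^{\mu'}(t)$ applies. Evaluating at the second point $(u,\bm v) = (|\bm v|,\bm v)$, so that $u/|\bm v| = 1$, gives the analog of \eqref{eq:reprodPd=1}:
$$
\bm P_n^{1,d_2}\big(\bm W_{\mu',\bk}^{[-1,1]\rtimes\Sigma}; (x,\bm y),(|\bm v|,\bm v)\big) = \sum_{m=0}^n \frac{1}{(2\mu+d_1)_{2m}} \sum_{|\bm j|=n-m}\frac{\hat\Lb(\bm y)\hat\Lb(\bm v)}{\bm h^{\Sigma}} |\bm y|^m|\bm v|^m Z_m^{\mu'}\Big(\frac{x}{|\bm y|}\Big).
$$
Substituting $x = \zeta(\bm x,\bm y,\bm u,\bm v;s)$ and comparing term by term with the integrand from the previous step yields \eqref{eq:P=IntPBSig}, with constant $c_{\mu-\f12}$ coming from \eqref{eq:add-ball}.

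Finally, I must check that $|\zeta|\le 1$ is wrong as a normalization and that the correct condition is $|\zeta|\le|\bm y|$, which places $(\zeta,\bm y)$ in $[-1,1]\rtimes\RR_\Sigma^{d_2}$. This follows from Cauchy--Schwarz in $\RR^{d_1+1}$ applied to $(\bm x,\sqrt{|\bm y|^2-\|\bm x\|^2})$ and $(\bm u, s\sqrt{|\bm v|^2-\|\bm u\|^2})$, whose norms are at most $|\bm y|$ and $|\bm v|$. This gives $|\zeta|\le |\bm y|$, i.e.\ $|\zeta|/|\bm y| \le 1$, which is what the statement means.

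The main obstacle is bookkeeping. I need to confirm that the normalizations of $\hat\Lb$ and the factor $(2\mu+d_1)_{2m}$ match exactly between the general case and the $d_1=1$ case, and that the parameter shift leaves $\a$ unchanged. Both checks reduce to the observation $2\mu'+1 = 2\mu+d_1$.
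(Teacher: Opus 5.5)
Your proposal is correct and follows the paper's route. The paper only says the proof is a verbatim adaptation of Proposition~\ref{prop:Pd=IntPd=1}, which is exactly what you do: expand the kernel in the wrapped basis, collapse the ball part with \eqref{eq:add-ball}, and match term by term with the $d_1=1$ kernel at $(|\bm v|,\bm v)$, using $2\mu'+1=2\mu+d_1$.

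Your correction of the range of $\zeta$ is also right. Cauchy--Schwarz gives $|\zeta|\le|\bm y|$, i.e.\ $(\zeta,\bm y)\in[-1,1]\rtimes\RR_\Sigma^{d_2}$. The literal bound $|\zeta|\le 1$, carried over from the simplex case, fails here because $|\bm y|$ is unbounded: for example $\bm x=\bm u=0$, $s=1$ gives $\zeta=|\bm y|$.
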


The proof of this proposition follows almost verbatim from that of Proposition \ref{prop:Pd=IntPd=1}; we shall
omit the details. We are now ready to state the addition formula for 
$\Pb_n^{d_1,d_2} \big(\bm W_{\mu, \bk}^{\BB\rtimes \Sigma}; \cdot,\cdot)$.

\begin{thm}
Let $\bk =(\k_1,\ldots, \k_d)$, $\k_i > - 1/2$ and $\mu > 0$. Then, for $(\xb, \bm y)$ and $(\ub, \bm v)$ in 
$\BB^{d_1} \rtimes \RR_\Sigma^{d_2}$,
\begin{align*} 
 & \bm P_n^{d_1,d_2} \left(\bm W_{\mu, \bk}^{\BB\rtimes \Sigma}; (x,\bm y), (u, \bm v)\right) 
  = \cb_{\mu,\bk} \int_{[0,\pi]^{d_2}} L_n^{|\bk| + 2\mu +d_1+d_2-1} \big(\xi(\xb, \yb, \ub, \bm v; s, \bm \t) \big) \\
  & \quad \times \prod_{i=1}^{d_2-1} j_{\k_i-\f12}\big ( \sqrt{y_i v_i} \sin \t_i\big) 
   j_{\mu-\f12} \Bigg(\sqrt{\tfrac{\la \xb,\ub\ra+|\yb|\cdot |\vb| + s \sqrt{|\yb|^2-\|\xb\|^2}\sqrt{|\vb|^2-\|\ub\|^2} }{2} }   \sin \t_d \Bigg) \\
 &\quad \times \exp \left( -\sum_{i=1}^{d-1} \sqrt{y_i v_i} \cos \t_i- \sqrt{\tfrac{\la \xb,\ub\ra+|\yb|\cdot |\vb| 
   + s \sqrt{|\yb|^2-\|\xb\|^2}\sqrt{|\vb|^2-\|\ub\|^2} }{2} } \cos \t_{d}\right)\\
  & \quad \times \prod_{i=1}^{d-1} (\sin \t_i)^{2 \k_i} (\sin \t_d)^{2\mu-1} (1-s^2)^{\mu-1} \d \bm{\t} \d s,
\end{align*}
where $\cb_{\mu,\bk} =  c_{\mu-\f12} \cb_{\bk, \mu-\f12}^L$, and
\begin{align*} %\label{eq:eta()BSig}
 \xi(x, \bm y, u, \bm v; \bm t)  = & |\yb'| + |\yb| +|\vb'| + |\vb| + 2 \sum_{i=1}^{d-1} \sqrt{y_i v_i} \cos \t_i \\
   & + 2 \big(\la \xb,\ub\ra+|\yb|\cdot |\vb| + s \sqrt{|\yb|^2-\|\xb\|^2}\sqrt{|\vb|^2-\|\ub\|^2} \big) \cos \t_d.  \notag
\end{align*}
\end{thm}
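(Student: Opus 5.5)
The plan mirrors the proof of Theorem \ref{thm:addition_BT}. I combine Proposition \ref{prop:Pd=IntPd_BSig}, which writes $\bm P_n^{d_1,d_2}$ as a one-dimensional integral in $s$ of $\bm P_n^{1,d_2}$ with parameter $\mu+\frac{d_1-1}{2}$, with Proposition \ref{prop:Pn_BSig_d1=1}, which gives the closed form of $\bm P_n^{1,d_2}$ via the Laguerre addition formula \eqref{eq:addL}.

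The first step is to evaluate the $d_1=1$ formula of Proposition \ref{prop:Pn_BSig_d1=1} at the second point $(u,\bm v)=(|\bm v|,\bm v)$, with first point $(\zeta,\bm y)$ where $\zeta=\zeta(\bm x,\bm y,\bm u,\bm v;s)$. Since $|\bm v|-u=0$, the last pair of factors degenerates.
\begin{itemize}
\item The argument $\sqrt{(|\bm y|-\zeta)(|\bm v|-u)}\cos\t_{d+1}$ in $\eta$ and in the exponential vanishes.
\item The Bessel factor becomes $j_{\mu-\f12}(0)=1/(2^{\mu-\f12}\Gamma(\mu+\f12))$, a constant.
\end{itemize}
The $\t_{d+1}$-integral then decouples. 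It equals $\int_0^\pi(\sin\t)^{2\mu-1}\d\t$ times that constant, where $\mu$ is already the shifted parameter $\mu+\frac{d_1-1}2$. The remaining pair involves
$$
\tfrac12\sqrt{(|\bm y|+\zeta)\,2|\bm v|}=\sqrt{\tfrac{|\bm y||\bm v|+\la\bm x,\bm u\ra+s\sqrt{|\bm y|^2-\|\bm x\|^2}\sqrt{|\bm v|^2-\|\bm u\|^2}}{2}},
$$
using $|\bm v|\zeta=\la\bm x,\bm u\ra+s\sqrt{\cdot}\sqrt{\cdot}$. This quantity gives the $j_{\mu-\f12}(\cdots\sin\t_d)$ factor and the $\cos\t_d$ term in the exponential. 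In $\eta$, the $\t_d$ term becomes $2\sqrt{(\cdots)/2}\cos\t_d$, which is how the $\t_d$ term of $\xi$ in the statement should be read. The linear part $|\bm y'|+|\bm y|+|\bm v'|+|\bm v|$ is unchanged.

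Next I substitute this into \eqref{eq:P=IntPBSig} and integrate against $c_{\mu-\f12}(1-s^2)^{\mu-1}\d s$. This produces the integral over $(s,\bm\t)\in[-1,1]\times[0,\pi]^{d_2}$ of $L_n^{|\bk|+2\mu'+d_2-1}$ with $\mu'=\mu+\frac{d_1-1}2$. The Laguerre index then becomes $|\bk|+2\mu+d_1+d_2-2$. I would reconcile this with the stated index $|\bk|+2\mu+d_1+d_2-1$ by tracking exactly how the paper counts $|\bk|$: in the $\bm\gamma$ or $\bm\beta$ bookkeeping, $\bk$ has $d_2-1$ entries, and the appended parameter determines $|\bm\beta|+(d+1)-1$.

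The main obstacle is this constant and parameter bookkeeping, not the structure of the argument. Specifically:
\begin{itemize}
\item The Bessel and Laguerre orders must be carried consistently under $\mu\mapsto\mu+\frac{d_1-1}2$.
\item The exponents of $\sin\t_d$ and the $j$-order must match the shifted $\mu$.
\item The constant $j_{\mu-\f12}(0)\int_0^\pi(\sin\t)^{2\mu-1}\d\t$ must be absorbed, together with $c_{\mu-\f12}$ and the factor $2^{2\mu-1}$ from the change of variables in Theorem \ref{thm:Q_BSig_d1=1}, into $\cb_{\bk,\mu-\f12}^L$ to produce $\cb_{\mu,\bk}$.
\end{itemize}
I would check the final normalization by setting $n=0$ and verifying that both sides equal $1$. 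Finally, $|\zeta|\le|\bm y|$ ensures all square roots are real, so the interchange of sum and integrals is legitimate as a finite sum.
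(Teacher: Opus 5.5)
Your route is the paper's. There the theorem is obtained in one line by inserting Proposition \ref{prop:Pn_BSig_d1=1}, with $\mu$ replaced by $\mu'=\mu+\frac{d_1-1}{2}$, into Proposition \ref{prop:Pd=IntPd_BSig}. It uses exactly the two facts you isolate: the $\theta_{d+1}$-factors collapse at $u=|\bm v|$ via $j_\alpha(0)=2^{-\alpha}/\Gamma(\alpha+1)$, and $(\zeta+|\bm y|)|\bm v|=\la \bm x,\bm u\ra+|\bm y||\bm v|+s\sqrt{|\bm y|^2-\|\bm x\|^2}\sqrt{|\bm v|^2-\|\bm u\|^2}=:A$.

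The one step of your plan that cannot be carried out is the ``reconciliation'' of the Laguerre index, so do not try to force agreement with the printed statement. Your value $|\bk|+2\mu+d_1+d_2-2$, with $\bk\in\RR^{d_2-1}$ as the weight requires, is the correct one. To see this, evaluate at $(\bm u,\bm v)=(\bm 0,\bm 0)$. Only the $m=0$ terms of \eqref{eq:reprodP_BSig} survive, so the kernel equals $\Pb_n(\Wb^\Sigma_{\bg};\bm y,\bm 0)=L_n^{|\bg|+d_2-1}(|\bm y'|+|\bm y|)$ with $\bg=(\bk,2\mu+d_1-1)$. The printed formula instead gives a constant multiple of $L_n^{|\bk|+2\mu+d_1+d_2-1}(|\bm y'|+|\bm y|)$ there, so the printed $-1$ is a misprint.

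The same happens wherever the statement keeps the unshifted $\mu$:
\begin{itemize}
\item The surviving $\theta_d$-factor carries $(\sin\theta_d)^{2\mu'-1}=(\sin\theta_d)^{2\mu+d_1-2}$.
\item By \eqref{eq:addL} with $\kappa=\mu'-\frac12$, its Bessel function is $j_{\mu'-1}$. The order $\mu-\frac12$ printed in Proposition \ref{prop:Pn_BSig_d1=1} should already read $\mu-1$.
\item The $\theta_d$-term of $\xi$ is $\sqrt{2A}\cos\theta_d$, as you say.
\item The constant is simply $c_{\mu-\frac12}\,\cb^L_{(\bk,\mu'-\frac12)}$.
\end{itemize}
For the constant: with the correct order, $j_{\mu'-1}(0)\int_0^\pi(\sin\theta)^{2\mu'-1}\d\theta$ cancels exactly against the one-variable factor $2^{\mu'-\frac12}\Gamma(\mu'+\frac12)/\sqrt{2\pi}$ of $\cb^L_{(\bk,\mu'-\frac12,\mu'-\frac12)}$. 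This is \eqref{eq:productL1} at $n=0$, $s=0$. Also, no factor $2^{2\mu-1}$ enters. Both inner products are normalized and the affine map of Theorem \ref{thm:Q_BSig_d1=1} carries one onto the other, so the two kernels coincide exactly.
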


This addition formula follows immediately from combining the two previous propositions, in which we have used
$j_\a(0) =  2^{-\a} /\Gamma(\a+1)$, as well as 
$$
 (\zeta (\bm x,\bm y,  \bm u,\bm v; s) + |\yb|)|\vb| = \la \xb,\ub\ra+ |\yb|\cdot |\vb|
   + s \sqrt{|\yb|^2-\|\xb\|^2}\sqrt{|\vb|^2-\|\ub\|^2}.
$$

\section{Spherical-Laguerre polynomials on the surface $\SS^{d_1-1}\!\rtimes \RR_\Sigma^{d_2}$}
\setcounter{equation}{0}

We consider orthogonal polynomials on the surface of $\BB^{d_1} \! \rtimes \RR_\Sigma^{d_2}$ in this section,
which is the wrapped product of the unit sphere with $\RR_\Sigma^{d_2}$ defined by 
%Let $d_1$ and $d_2$ be two positive integers and $d_1 \ge 2$. We consider the quadratic surface 
%$\SS^{d_1-1}\!\rtimes \RR_\Sigma^{d_2}$ of the domain $\BB^{d_1}\!\rtimes \RR_\Sigma^{d_2}$, defined by 
$$
  \SS^{d_1-1}\!\rtimes \RR_\Sigma^{d_2} = \left\{(\bm x,\bm y) \in \RR^{d_1} \times \RR_+^{d_2}: \|\bm x\| = |\bm y|\right\}.
$$
For $\bm\kappa \in \RR^{d_2}$ such that $\k_i > -1$, $1 \le i \le d_2-1$, and $\k_{d_2} > - d_1$, we define  
$$
  {\sf W}_{\bk}^{\SS\rtimes \Sigma}(\bm y) =  \prod_{i=1}^{d_2-1} |y_i|^{\k_i} |\bm y|^{\k_{d_2}} \e^{-|\yb'|- |\yb|}, \qquad \yb \in \RR_\Sigma^{d_2},
$$
and consider orthogonal polynomials with respect to the inner product defined by 
\begin{equation} \label{eq:ipdS-Sig}
  \la f, g\ra_{\bk}^{\SS\rtimes \Sigma}  = \sb_{\bk}^{\SS\rtimes \Sigma} 
  \int_{ \SS^{d_1-1}\!\rtimes \RR_\Sigma^{d_2}} f(\bm x,\bm y) g(\bm x,\bm y) \sW_{\bk}^{\SS\rtimes \Sigma}(\bm y) 
     \d \s(\bm x, \bm y), 
\end{equation}
where $\d \s(\bm x, \bm y)$ is the surface measure of $\SS^{d_1-1}\!\rtimes \RR_\Sigma^{d_2}$ and
the constant $\sb_{\bk}^{\SS\rtimes \Sigma}$ satisfies
\begin{equation} \label{eq:b-S-Sig0}
  \sb_{\bk}^{\SS\rtimes \Sigma} = \frac{1}{\o_{d_1}} \bm b^L_{(\bk', \k_{d_2} +d_1 -1)}= \frac{1}{\o_{d_1}}
     \frac{1}{\prod_{i=1}^{d_2-1} \Gamma(\k_i +1) \Gamma(\k_{d_2} + d_1)},
\end{equation}
where $\o_d$ is the surface area of $\sph$. Let 
$\CV_n\big(\SS^{d_1-1}\!\rtimes \RR_\Sigma^{d_2},  {\sf W}_{\bk}^{\SS\rtimes \Sigma}\big)$ be
the space of orthogonal polynomials of degree $n$ with respect to the inner product 
$\la \cdot, \cdot\ra_{\g, \bk}^{\SS\rtimes \Sigma}$. 
 
 We let $\{\hat \Lb_{\bm j, m}^{\bg}: |\bm j| = m, \, \bm j\in \NN_0^{d_2}\}$ be an orthogonal basis for 
$\CV_{m}\left(\RR_\Sigma^{d_2},\bm W^\Sigma_{\bg}\right)$, where $\bg \in \RR^{d_2}$, such as the one 
given in \eqref{eq:basis_S}. Let $\{ \sY_{\ell}^m: 1 \le \ell \le \dim \CH_m^{d_1} \}$ be an orthonormal basis of 
$\CH_n^{d_1}$. 
 
\begin{prop}\label{prop:OP_SS-SIg}
For $\bk \in \RR^{d_2}$ with $\k_i > -1$, $1 \le i \le d_2-1$, and $\k_{d_2} > - d_1$. Let $\a = \k_{d_2} + d_1 -1$. Define  
\begin{equation*}% \label{eq:QbasisV0}
 \sQ_{\bm j, \ell, m}^n(\bm x,\bm y) = \hat \Lb_{\bm j, n-m}^{(\bk, \alpha+2m)}(\bm y) |\bm y|^m 
      \sY_{\ell}^m \left(\frac{\bm x}{|\bm y|}\right). 
\end{equation*}
Then $\{\sQ_{\bm j, \ell, m}^n: |\bm j| = n-m, \, 1 \le \ell \le \dim \CH_m^{d_1}, \, 0 \le m  \le n, \bm j \in \NN_0^{d_2}\}$ 
is an orthogonal basis of $\CV_n(\SS^{d_1-1}\!\rtimes \RR_\Sigma^{d_2}, \sW_{\bk}^{\SS\rtimes \Sigma})$. Moreover, 
the squared norm of $\sQ_{\bm j, \ell, m}^n$ 
is given by 
\begin{equation*} %\label{eq:Q_V0Norm}
   \sh_{m,n}^{\bk}  % = \la \sQ_{\bm j, \ell, m}^n, \sQ_{\bm j,\bm k, m}^n \ra_{\mu, \bk} 
       =   \frac{\bm b^L_{(\bk',\alpha)}}{\bm b^L_{(\bk', \alpha + 2m)}} \bm h_{\bm j,m}^{(\bk', \a+ 2m), L}
       =  (\a+1)_{2m}\bm h_{\bm j,m}^{(\bk', \a+ 2m), L}.  
\end{equation*}
\end{prop}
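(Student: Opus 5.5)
We plan to parametrize the surface by $\bm x = |\bm y|\,\xi$ with $\xi \in \SS^{d_1-1}$ and $\bm y \in \RR_\Sigma^{d_2}$, and reduce everything to separated integrals over the sphere and over $\RR_\Sigma^{d_2}$. The basis is the wrapped product of Proposition \ref{prop:wrapped_orth} in its surface form, with $\rho(\bm y)=|\bm y|$ and spherical harmonics in the role of the first factor. So the argument will run parallel to the surface case on $\SS^{d_1-1}\rtimes\triangle^{d_2}$ (Proposition 7.2 of \cite{X26}), with the simplex replaced by $\RR_\Sigma^{d_2}$.

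\emph{Integration formula.} First, the surface measure factors as $\d\s(\bm x,\bm y) = c\,|\bm y|^{d_1-1}\d\s(\xi)\,\d\bm y$. The constant $c$ comes from the slope of the cone $\|\bm x\|=|\bm y|$ and is absorbed into the normalization $\sb_\bk^{\SS\rtimes\Sigma}$. Since $\sW_\bk^{\SS\rtimes\Sigma}$ depends only on $\bm y$, we get
$$
\int_{\SS^{d_1-1}\rtimes\RR_\Sigma^{d_2}} f\,\sW_\bk^{\SS\rtimes\Sigma}\,\d\s = c\int_{\RR_\Sigma^{d_2}} \int_{\SS^{d_1-1}} f(|\bm y|\xi,\bm y)\,\d\s(\xi)\; \prod_{i=1}^{d_2-1} y_i^{\k_i}\, |\bm y|^{\k_{d_2}+d_1-1} \e^{-|\bm y'|-|\bm y|}\,\d\bm y .
$$
The $\bm y$-weight here is $\Wb^\Sigma_{(\bk',\a)}$ with $\a=\k_{d_2}+d_1-1$, and $\a>-1$ exactly because $\k_{d_2}>-d_1$. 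This also confirms the value \eqref{eq:b-S-Sig0} of the constant.

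\emph{Orthogonality.} Insert $\sQ^n_{\bm j,\ell,m}\,\sQ^{n'}_{\bm j',\ell',m'}$ into this formula. Because $\sY^m_\ell(\bm x/|\bm y|)=\sY^m_\ell(\xi)$, the sphere integral gives $\o_{d_1}\delta_{m,m'}\delta_{\ell,\ell'}$. What remains is
$$
\int_{\RR_\Sigma^{d_2}} \hat\Lb^{(\bk',\a+2m)}_{\bm j,n-m}\,\hat\Lb^{(\bk',\a+2m)}_{\bm j',n'-m}\,|\bm y|^{2m}\,\Wb^\Sigma_{(\bk',\a)}\,\d\bm y ,
$$
and $|\bm y|^{2m}\Wb^\Sigma_{(\bk',\a)}=\Wb^\Sigma_{(\bk',\a+2m)}$. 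The orthogonality of the $\hat\Lb$ basis in \eqref{eq:basis_S} then gives $\delta_{\bm j,\bm j'}$, and $\delta_{n,n'}$ follows. The same computation with $\bm j=\bm j'$ gives the norm: the ratio $\bm b^L_{(\bk',\a)}/\bm b^L_{(\bk',\a+2m)}$ times $\bm h^{(\bk',\a+2m),L}_{\bm j,m}$. The constants $\bm b^L$ differ only in the last Gamma factor, so the ratio is $\Gamma(\a+2m+1)/\Gamma(\a+1)=(\a+1)_{2m}$.

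\emph{Polynomiality and completeness — the main obstacle.} We must show that each $\sQ^n_{\bm j,\ell,m}$ restricted to the surface equals a polynomial of degree $n$, and that these functions span $\CV_n$. For polynomiality, $\sY^m_\ell$ is homogeneous of degree $m$, so $|\bm y|^m\sY^m_\ell(\bm x/|\bm y|)=\sY^m_\ell(\bm x)$, and the product with the degree $n-m$ factor $\hat\Lb$ has degree $n$. For completeness we count. The number of $\sQ^n_{\bm j,\ell,m}$ with $0\le m\le n$ is $\sum_{m}\dim\CH_m^{d_1}\binom{n-m+d_2-1}{d_2-1}$. On the quadric $\|\bm x\|^2=|\bm y|^2$, every polynomial reduces modulo $\|\bm x\|^2-|\bm y|^2$ to one of the form $\sum_m \sY_m(\bm x)\,p_{n-m}(\bm y)$, by the harmonic decomposition in $\bm x$. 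Hence the space of polynomials of degree at most $n$ restricted to the surface has dimension $\binom{n+d}{n}-\binom{n+d-2}{n-2}$ with $d=d_1+d_2$, which matches the cumulative count. The orthogonal family is therefore a basis, and sorting by degree gives $\CV_n$.
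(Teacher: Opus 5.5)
Your proposal is correct. The paper gives no proof of its own here; it only cites Proposition~7.6 of its earlier work on wrapped products. Your route is the standard wrapped-product argument behind that citation: parametrize by $\bm x=|\bm y|\xi$, separate the integral, use the orthonormality of $\sY^m_\ell$ together with $|\bm y|^{2m}\Wb^\Sigma_{(\bk',\a)}=\Wb^\Sigma_{(\bk',\a+2m)}$, and finish with a dimension count. One small point of phrasing: the harmonic-decomposition reduction by itself only bounds the dimension of the restricted polynomial space from above, and it is this bound together with the linear independence of your orthogonal family that gives the basis property.
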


This is \cite[Proposition 7.6.]{X26}. Similar to that of $\SS^{d_1 -1} \! \rtimes \triangle^{d_2}$, the most 
important family turns out to be when $\k_{d_2} = -1$, since it is the only case for which the orthogonal 
polynomials possess a spectral operator \cite{X26}, and for which the addition formula holds, and only 
in this case. 

Let $\sP_n\big(\sW_{\bk}^{\SS\rtimes \RR_\Sigma}; \cdot, \cdot\big)$ be the reproducing kernel of the space 
$\CV_n\big(\SS^{d_1-1}\!\rtimes \RR_\Sigma^{d_2},  \sW_{\bk}^{\SS\rtimes \Sigma}\big)$. 

\begin{thm}\label{thm:additionS-Sig}
Let $d_1 \ge 2$ and $\k_1,\ldots, k_{d_2-1} \ge -\f12$ and $\k_{d_2} = -1$. Then 
\begin{align*}%\label{eq:additionV0}
 & \sP_n  \big(\sW_{\bk}^{\SS\rtimes \Sigma},  (\bm x, \bm y), (\bm u, \bm v) \big) 
    =  \sc_{\bk} \int_{[0,\pi]^{d_2}} L_{2n}^{|\bk| + d_1+d_2-2}
    \big( \xi_0(\bm x, \bm y, \bm u, \bm v; \bm \t)\big) \notag \\
  & \times \prod_{i=1}^{d_2-1} j_{\k_i-\f12}\big ( \sqrt{y_i v_i} \sin \t_i\big)
     j_{\frac{d_1-3}{2}}  \Big(\sqrt{\tfrac{ \la \xb,\ub\ra +|\yb|\cdot|\vb|}2} \sin \t_{d_2} \Big) \\
  &\times \exp \left( -\sum_{i=1}^{d_2-1} \sqrt{y_i v_i} \cos \t_i-  \f12 \sqrt{\la \xb, \ub\ra + |\yb|\cdot |\vb|}\cos \t_{d_2} \right)
   \prod_{i=1}^{d_2-1} (\sin \t_i)^{2\k_i} (\sin \t_{d_2})^{d_1-3} \d \bm{\t},
\end{align*}
where $\cb_\bk =  \cb_{\bk', \frac{d_1-3}{2}, \frac{d_1-3}{2}}^L$, and
\begin{align*}%\label{eq:eta()SS-Sig}
 \xi_0(\xb, \bm y, \ub, \bm v; \bm \theta)  = & |\yb'| + |\yb| +|\vb'| + |\vb| + 2 \sum_{i=1}^{d-1} \sqrt{y_i v_i} \cos \t_i
  +  \sqrt{(\la \xb,\ub\ra + |\yb|\cdot |\vb|} \cos \t_{d_2}  . \notag
\end{align*} 
\end{thm}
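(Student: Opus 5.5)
The proof will follow the one for Theorem \ref{thm:additionV0}, with the simplex replaced by $\RR_\Sigma^{d_2}$ and the Jacobi addition formula on $\triangle^{d+1}$ replaced by the Laguerre addition formula \eqref{eq:addL}, routed through Proposition \ref{prop:Pn_BSig_d1=1}.

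\emph{Step 1: expand the kernel.} Write the kernel in the basis of Proposition \ref{prop:OP_SS-SIg} with $\k_{d_2}=-1$, so that $\a = d_1-2$. Set $\xb=|\yb|\xi$ and $\ub=|\vb|\eta$ with $\xi,\eta\in\SS^{d_1-1}$. Summing over $\ell$ with the addition formula \eqref{eq:additionF} gives
$$
\sP_n\big(\sW_{\bk}^{\SS\rtimes\Sigma};(\xb,\yb),(\ub,\vb)\big)
=\sum_{m=0}^n \frac{1}{(d_1-1)_{2m}}\sum_{|\jb|=n-m}
\frac{\hat\Lb_{\jb,n-m}^{(\bk',d_1-2+2m)}(\yb)\,\hat\Lb_{\jb,n-m}^{(\bk',d_1-2+2m)}(\vb)}{\bm h_{\jb,n-m}^{(\bk',d_1-2+2m),L}}\,|\yb|^m|\vb|^m Z_m^{\frac{d_1-2}{2}}(\la\xi,\eta\ra).
$$

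\emph{Step 2: identify with the $d_1=1$ kernel.} Write out the $(1,d_2)$ kernel on $[-1,1]\rtimes\RR_\Sigma^{d_2}$ from \eqref{eq:OP_BSig} and \eqref{eq:OP_BSig_Norm}, the analogue of \eqref{eq:reprodPd=1}. Take $\mu=\frac{d_1-2}{2}$, so that $2\mu+1=d_1-1$ and $\a+2m=2\mu+2m$. Evaluate it at the points $(|\yb|\la\xi,\eta\ra,\yb)$ and $(|\vb|,\vb)$. Using $h_m^{\mu}$ and $C_m^\mu(1)$ to convert $C_m^\mu$ into $Z_m^\mu$ as in the proof of Proposition \ref{prop:Pd=IntPd=1}, the result should match Step 1 term by term. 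This gives
$$
\sP_n\big(\sW_{\bk}^{\SS\rtimes\Sigma};(\xb,\yb),(\ub,\vb)\big)=\Pb_n^{1,d_2}\Big(\Wb^{[-1,1]\rtimes\Sigma}_{\frac{d_1-2}{2},\bk'};(|\yb|\la\xi,\eta\ra,\yb),(|\vb|,\vb)\Big).
$$

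\emph{Step 3: insert the Laguerre formula.} Apply Proposition \ref{prop:Pn_BSig_d1=1} with $\mu=\frac{d_1-2}{2}$, so that $\mu-\f12=\frac{d_1-3}{2}$. Since $u=|\vb|$, the factor $\sqrt{|\vb|-u}$ vanishes. In the $\t_{d+1}$ variable the Bessel factor therefore becomes the constant $j_{\mu-\f12}(0)=2^{-(\mu-\f12)}/\Gamma(\mu+\f12)$, and the exponential and $\eta$ lose their $\t_{d+1}$ terms. The remaining $\t_{d+1}$-integral of $(\sin\t_{d+1})^{2\mu-1}$ is then an explicit Beta constant that is absorbed into $\sc_\bk$. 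In the surviving terms use
$$
(|\yb|+|\yb|\la\xi,\eta\ra)\cdot 2|\vb| = 2\big(\la\xb,\ub\ra+|\yb||\vb|\big).
$$
This turns $\sqrt{(|\yb|+x)(|\vb|+u)}$ into $\sqrt{2(\la\xb,\ub\ra+|\yb||\vb|)}$, and the Bessel argument $\tfrac12\sqrt{(|\yb|+x)(|\vb|+u)}$ into $\sqrt{\tfrac{\la\xb,\ub\ra+|\yb||\vb|}{2}}$. These are the quantities appearing in $\xi_0$ and the integrand. The Laguerre index is $|\bk'|+2\mu+d_2-1$ (one parameter $\mu-\f12$ per split coordinate plus $d_2$), which equals $|\bk|+d_1+d_2-2$ once $\k_{d_2}=-1$ is counted.

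\emph{Main obstacle: bookkeeping.} The structure is forced, so the hardest part is the constant and normalization tracking. This covers matching $(\a+1)_{2m}$ with the ratio $(2\mu+1)_{2m}$ from \eqref{eq:OP_BSig_Norm}, the factor $2^{2\mu-1}$ in the weight under $\phi$, and the collapsed $\t_{d+1}$ integral. I would first verify the identity of Step 2 at $n=0$ and $m=0$ to fix the constants. I would then check, by comparing the $m$-dependence, that no stray $2^m$ enters from $\phi$, and reconcile the stated indices (e.g.\ the subscript $2n$ and the factors $\f12$ in front of the square roots) against this computation.
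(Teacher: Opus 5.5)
Your proposal is correct and is essentially the paper's proof: the paper also combines Proposition \ref{prop:OP_SS-SIg} with the spherical harmonic addition formula to get $\sP_n=\Pb_n^{1,d_2}$ with $\mu=\frac{d_1-2}{2}$ at the points $(|\yb|\la\xi,\eta\ra,\yb)$ and $(|\vb|,\vb)$, and then reads the formula off Proposition \ref{prop:Pn_BSig_d1=1}, where the $\t_{d_2+1}$ integral collapses because $|\vb|-u=0$; your computation correctly gives $L_n$ rather than $L_{2n}$, $\sqrt{2(\la\xb,\ub\ra+|\yb||\vb|)}\cos\t_{d_2}$ in $\xi_0$, and $\sqrt{(\la\xb,\ub\ra+|\yb||\vb|)/2}$ in the Bessel and exponential factors. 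There are two caveats, both inherited from the paper: first, Proposition \ref{prop:Pn_BSig_d1=1} rests on \eqref{eq:addL}, which needs $\mu>0$ and $\k_i>-\f12$, so this route only proves the formula for $d_1\ge 3$ and $\k_i>-\f12$ (at $d_1=2$ your Beta constant $\int_0^\pi(\sin\t)^{-1}\d\t$ is infinite); second, if you rederive that proposition from \eqref{eq:addL} with $\bm\beta=(\bk',\mu-\f12,\mu-\f12)$, the Bessel factor attached to each split coordinate is $j_{\mu-1}$, not $j_{\mu-\f12}$, so the collapsed constant is $j_{\mu-1}(0)$ and the surviving factor should be $j_{\frac{d_1-4}{2}}$ rather than $j_{\frac{d_1-3}{2}}$.
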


\begin{proof}
The proof is similar to that of Theorem \ref{thm:additionV0}. From the explicit formulas for the orthogonal basis and the
norms in Proposition \ref{prop:OP_SS-SIg}, and applying the addition formula for the spherical harmonics, it follows 
that 
$$
\sP_n  \big(\sW_{\bk}^{\SS\rtimes \Sigma},  (\bm x, \bm y), (\bm u, \bm v) \big) 
 = \bm P_n^{1,d_2}\left(\bm W_{(\frac{d_1-2}{2},\bk')}^{[-1,1] \rtimes \Sigma} \big(|\bm y| \la \xi,\eta\ra,\bm y\big), (|\bm v|, \bm v)\right).  
$$
Consequently, the addition formula for $\sW_{\bk}^{\SS\rtimes \Sigma}$ follows from Proposition \ref{prop:Pn_BSig_d1=1}.
\end{proof}

%\noindent
%{\bf Conflicts of Interest Statement}: I have no conflicts of interest to disclose.

\iffalse
\noindent
{\bf Statements and Declarations. } \\

\noindent {\bf Funding}: This work was partially supported by Simons Foundation Grant \# 849676. \\

\noindent {\bf Competing Interests}: The author has no relevant financial or non-financial interests to disclose. \\

\noindent {\bf Data Availability Statement} Data sharing not applicable, as the article has no associated data. 

\vskip .3in
\fi

\end{document}